\documentclass[12pt]{article}
\usepackage{amsmath,amsfonts,amssymb,amsthm}

\oddsidemargin 0cm \evensidemargin 0cm
\topmargin0cm
\headheight0cm
\headsep0cm
\textheight23.5cm
\topskip2ex
\textwidth16cm

\newcommand{\I}{{\bf 1}}
\newtheorem{proposition}{Proposition}[section]
\newtheorem{theorem}[proposition]{Theorem}
\newtheorem{corollary}[proposition]{Corollary}
\newtheorem{lemma}[proposition]{Lemma}
\newtheorem{remark}[proposition]{Remark}

\newtheorem{example}[proposition]{Example}

\numberwithin{equation}{section}
\newcommand{\nc}{\newcommand}
\nc{\R}{{\mathbb R}}
\nc{\bS}{{\mathbb S}^{d-1}}
\nc{\N}{{\mathbb N}}
\nc{\Z}{{\mathbb Z}}
\nc{\BP}{\mathbb{P}}
\nc{\BE}{\mathbb{E}}
\nc{\BQ}{\mathbb{Q}}
\nc{\bN}{{\mathbf N}}
\nc{\BX}{{\mathbb X}}
\nc{\BY}{{\mathbb Y}}
\nc{\cB}{{\mathcal B}}
\nc{\cX}{{\mathcal X}}
\nc{\cY}{{\mathcal Y}}
\nc{\dint}{{\rm d}}
\DeclareMathOperator{\BC}{{\mathbb Cov}}
\DeclareMathOperator{\BV}{\operatorname{Var}}

\DeclareMathOperator{\dom}{dom}

\DeclareMathOperator{\interior}{int}

%Comments

\begin{document}
\title{Normal approximation on Poisson spaces:\\ Mehler's formula, second order Poincar\'e inequalities and stabilization}
%\date{}
\date{\today}

\renewcommand{\thefootnote}{\fnsymbol{footnote}}

\author{G\"unter Last\footnotemark[1]\,,
Giovanni Peccati\footnotemark[2]\, and Matthias Schulte\footnotemark[1]}

\footnotetext[1]{Institute of Stochastics, Karlsruhe Institute of Technology,
%76128 Karlsruhe,
Germany, guenter.last@kit.edu and matthias.schulte@kit.edu. GL and MS were supported by the German Research Foundation
(DFG) through the research unit "Geometry and Physics of Spatial Random Systems" under the grant HU 1874/3-1.}

\footnotetext[2]{Mathematics Research Unit, Luxembourg University, Luxembourg, giovanni.peccati@gmail.com. GP was partially supported by the grant F1R-MTH-PUL-12PAMP  (PAMPAS), from Luxembourg University.}

\maketitle
\begin{abstract}

We prove a new class of inequalities, yielding bounds for the normal approximation in the Wasserstein and the
Kolmogorov distance of functionals of a general Poisson process (Poisson random measure). Our
approach is based on an iteration of the classical Poincar\'e inequality, as
well as on the use of Malliavin operators, of Stein's method, and of an
(integrated) Mehler's formula, providing a representation of the
Ornstein-Uhlenbeck semigroup in terms of thinned Poisson processes. Our
estimates only involve first and second order differential operators, and have
consequently a clear geometric interpretation.
In particular we will show that our results are
perfectly tailored to deal with the normal approximation of geometric functionals displaying a weak form of stabilization, and with non-linear functionals of Poisson shot-noise processes.
We discuss two examples of stabilizing functionals  in
great detail: (i) the edge length of the $k$-nearest neighbour graph, (ii)
intrinsic volumes of $k$-faces of Voronoi tessellations.
In all these examples we obtain rates of convergence (in the Kolmogorov and the Wasserstein
distance) that
one can reasonably conjecture to be optimal, thus significantly improving
previous findings in the literature.
As a necessary step in our analysis, we
also derive new lower bounds for variances of Poisson functionals.
\end{abstract}

\noindent {\bf Keywords:}
central limit theorem; chaos expansion; Kolmogorov distance; Malliavin calculus; Mehler's formula; nearest neighbour graph; Poincar\'e inequality; Poisson process; spatial Ornstein-Uhlenbeck process; stabilization; Stein's method; stochastic geometry; Voronoi tessellation; Wasserstein distance.

\smallskip

\noindent {\bf Mathematics Subject Classification (2000): 60F05, 60H07, 60G55, 60D05, 60G60}

% 60F05: Central limit and other weak theorems
% 60H07: Stochastic calculus of variations and the Malliavin calculus
% 60G55: Point process
% 60D05: Geometric probability, stochastic geometry, random sets
% 60G60: Random fields

\tableofcontents

\section{Introduction}\label{intro}

\subsection{Overview and motivation}

We consider a Poisson process (Poisson random measure) $\eta$ on a measurable
space $(\BX,\cX)$, with $\sigma$-finite intensity measure $\lambda$,
see \cite{Kallenberg}.
Let $F=f(\eta)$ be a measurable function of $\eta$.
The {\em Poincar\'e inequality} (see \cite{LaPe11, Wu00} and the references therein)
states that the variance of a  square-integrable Poisson functional $F$ can be bounded as
\begin{align}\label{eq:Poincare}
\BV F\le \BE \int (D_xF)^2\,\lambda(\dint x),
\end{align}
where the {\em difference operator} $D_xF$ is defined as $D_xF:=f(\eta+\delta_x)-f(\eta)$.
Here, $\eta+\delta_x$ is the configuration arising by adding to $\eta$
a point at $x\in\BX$. Consequently,
a small expectation of $\|DF\|^2$ leads to small fluctuations of $F$,
where $DF$ is a short-hand notation for the mapping (discrete gradient)
$x\mapsto D_xF$ (also depending on $\eta$) and where
$\|v\|:=(\int v^2\,\dint\lambda)^{1/2}$ denotes the $L^2(\lambda)$-norm
of a measurable function $v:\BX\rightarrow\R$. The principal aim of this paper
is
to combine estimates of the type \eqref{eq:Poincare} with a suitable version
of {\it Stein's method}
(see e.g.\ \cite{NP11}), in order to establish explicit bounds on the normal
approximation of a general functional of the type $F = f(\eta)$. To do so, we
will partially
follow the route pioneered in references \cite{Chatt09, NoPeRe09},
in the framework of the normal approximation of functionals of Gaussian fields.

Indeed, the estimate \eqref{eq:Poincare} can be regarded as the Poisson
space counterpart of the famous {\em Chernoff-Nash-Poincar\'e inequality} of
Gaussian analysis (see \cite{Chernoff, Nash}), stating that, if $X =
(X_1,...,X_d)$ is an i.i.d.\ standard Gaussian vector
and $f$ is a smooth mapping on $\R^d$, then
\begin{align}\label{eq:PoincareG}
\BV f(X) \le \BE \|\nabla f(X)\|^2.
\end{align}
Motivated by problems in random matrix theory, Chatterjee \cite{Chatt09} has
extended \eqref{eq:PoincareG} to a {\em second order Poincar\'e inequality},
by proving the following bound:  if $f$ is twice differentiable, then
(for a suitable constant $C$ uniquely depending  on the variance of $f(X)$)
\begin{equation}\label{e:sopg}
d_{TV}(f(X), N)\leq C\,  \BE\left[\|{\rm Hess}\, f(X)\|_{op}^{4}\right]^{1/4} \times \BE\left[\|{\nabla f}(X)\|^{4}\right]^{1/4},
\end{equation}
where $d_{TV}$ is the total variation distance between the laws of two random
variables, $\|\cdot \|_{op}$
stands for the usual operator norm, and $N$ is a Gaussian random variable with
the same mean
and variance as $f(X)$. The main intuition behind relation \eqref{e:sopg} is
the following:
if the $L^4$ norm of $\|{\rm Hess}\, f(X)\|_{op}$ is negligible with respect to
that of $\|\nabla f(X)\|$,
then $f$ is close to an affine transformation, and therefore the distribution
of $f(X)$ must be
close to Gaussian. This class of second order results has been further
generalized in \cite{NoPeRe09}
to the framework of functionals $F$ of an infinite-dimensional isonormal
Gaussian process $X$ over a
Hilbert space $\mathfrak{H}$,
in which case the estimate \eqref{e:sopg} becomes (with obvious notation)
\begin{equation}\label{e:sopgg}
d_{TV}(F, N)\leq C\,  \BE\left[\|D^2 F\|_{op}^{4}\right]^{1/4}
\times \BE\left[\|DF\|_{\mathfrak{H}}^{4}\right]^{1/4},
\end{equation}
where $D$ and $D^2$ stand, respectively, for the first and second
Malliavin derivatives associated with $X$ (see also \cite[Chapter
5]{NP11}). We notice immediately
that, in general, the estimates \eqref{e:sopg}--\eqref{e:sopgg} yield
suboptimal rates of convergence,
that is: if $F_n$ is a sequence of centred smooth functionals of $X$ such
that $\BV F_n =: v^2_n \to \infty$ and $\tilde{F}_n = F_n/ v_n$ converges to
$N$ in distribution, then \eqref{e:sopg}--\eqref{e:sopgg} often
yield an upper bound on the quantity $d_{TV}(\tilde{F}_n, N)$ of the order of
$v_n^{-1/2}$, and not (as expected)
of $v_n^{-1}$; see for instance the examples discussed in \cite[Section 6]{NoPeRe09}.

In this paper we shall establish and apply a new class of second order
Poincar\'e inequalities,
involving general square-integrable functionals of the Poisson process $\eta$.
The counterparts of the operators $\nabla$ and
${\rm Hess}$ appearing in \eqref{e:sopg} will be, respectively, the difference
operator $D$,
and the {\em second order difference operator} $D^2$, which acts on a random variable
$F$ by generating the symmetric random mapping
$D^2F:\BX\times\BX\rightarrow\R, (x,y) \mapsto D^2_{x,y}F:=D_y(D_xF)$.
%\footnote{As discussed below, the restriction of $D$ and $D^2$ to suitable
%  subspaces of random variables yields the Poisson space counterpart of the
%  first and second Malliavin derivatives appearing in \eqref{e:sopgg}, which
%  justifies the similar notation.}.
In view of the discrete nature of $D$ and $D^2$, our bounds will have a significantly
more complex structure than those appearing in
\eqref{e:sopg}--\eqref{e:sopgg}.
We will see that our estimates yield the presumably optimal rates of
convergence
(that is, rates of convergence proportional to the inverse of the square root
of the variance) in the normal approximation of non-linear functionals of
Poisson shot-noise processes, as well as in
two geometric applications displaying a {\em stabilizing}
nature (see e.g.\ \cite{PenroseYukich2005, SchreiberSur}). All
these rates have previously been outside the scope of %other
existing techniques.

Our approach relies heavily on the normal approximation results proved in
\cite{EichelsbacherThaele2013, PSTU10, Schulte2012}, which are in turn derived
from a combination of Stein's method and Malliavin calculus. However,
in order to apply these results as efficiently as possible, we need
to establish a general {\em Mehler's formula} for Poisson processes (see
\cite{Priv09} for a special case),
providing a representation of the inverse Ornstein-Uhlenbeck generator in terms of
the {\em thinned} Poisson process.
The  development and application of this formula
is arguably one of the crucial contributions of our work.

\subsection{Main results}

Our main findings will provide upper bounds on the {\em Wasserstein distance} and the
{\em Kolmogorov distance} between the law of
a standardized Poisson functional and a that of a standard normal random variable.
Here, the Wasserstein distance between the laws of two random variables $Y_1,Y_2$
is defined as
$$
d_W(Y_1,Y_2)=\sup_{h\in {\operatorname{Lip}}(1)} |\BE h(Y_1)-\BE h(Y_2)|,
$$
where ${\operatorname{Lip}}(1)$ is the set of all functions
$h:\R\to\R$ with a Lipschitz-constant less than or equal to one.
The Kolmogorov distance between the laws of $Y_1,Y_2$ is given by
$$
d_K(Y_1,Y_2)=\sup_{x \in \R} |\BP(Y_1\leq x)-\BP(Y_2\leq x)|.
$$
%corresponding to the supremum norm of the
%difference between the distribution functions of $Y_1$ and $Y_2$.
This is the supremum distance between the distribution functions of $Y_1$ and $Y_2$.

Our bound on the Wasserstein distance $d_W(F,N)$ (where $F$ is a Poisson
functional with zero mean
and unit variance, and $N$ is a standard Gaussian random variable) is stated
in the
forthcoming Theorem \ref{thm:maindW}, and is expressed in terms of the
following three parameters,
whose definition involves exclusively the random functions $DF$ and $D^2F$:
\begin{align*}
\gamma_1 & :=  4\bigg[\int \big[\BE (D_{x_1}F)^2 (D_{x_2}F)^2\big]^{1/2}
\big[\BE (D_{x_1,x_3}^2F)^2(D_{x_2,x_3}^2F)^2\big]^{1/2}\, \lambda^3(\dint(x_1,x_2,x_3))
\bigg]^{1/2},\\
\gamma_2 & := \bigg[\int \BE (D_{x_1,x_3}^2F)^2(D_{x_2,x_3}^2F)^2 \,\lambda^3(\dint(x_1,x_2,x_3))
\bigg]^{1/2},\\
\gamma_3 & := \int \BE |D_xF|^3 \, \lambda(\dint x).
\end{align*}

As is customary, throughout the paper we shall use the following notation:
$L^2_\eta$ indicates
the class of all square-integrable functionals of the Poisson measure $\eta$;
by $\dom D$
we denote the collection of those $F\in L^2_\eta$ such that
\begin{align}\label{2.31}
\BE  \int (D_xF)^2\, \lambda(\dint x)<\infty.
\end{align}

\begin{theorem}\label{thm:maindW}
Let $F\in \dom D$ be such that $\BE F=0$ and $\BV F=1$, and let $N$
be a standard Gaussian random variable. Then,
$$
d_W(F,N) \leq \gamma_1+\gamma_2+\gamma_3.
$$
\end{theorem}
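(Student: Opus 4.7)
The natural entry point is the Malliavin--Stein Wasserstein bound for Poisson functionals, proved via Stein's method in \cite{PSTU10, Schulte2012, EichelsbacherThaele2013}, of the form
\begin{equation*}
d_W(F,N) \leq \BE\bigg| 1 - \int (D_xF)(-D_xL^{-1}F)\, \lambda(\dint x) \bigg| + \int \BE\big[ (D_xF)^2\, |D_xL^{-1}F|\big]\, \lambda(\dint x),
\end{equation*}
where $L^{-1}$ denotes the pseudo-inverse of the Ornstein--Uhlenbeck generator. My plan is to eliminate $L^{-1}$ from this estimate and rewrite everything in terms of $DF$ and $D^2F$ alone, using the integrated Mehler formula $-D_xL^{-1}F = \int_0^\infty e^{-t} P_t D_xF\, \dint t$ (with the semigroup $P_t$ realised via the thinning construction advertised in the abstract) together with the $L^p$-contractivity of $P_t$.

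The cubic term is the easier of the two. Inserting Mehler, Jensen, and the $L^3$-contractivity of $P_t$, H\"older's inequality gives $\BE[(D_xF)^2 P_t |D_xF|]\leq \BE|D_xF|^3$ uniformly in $t$, and since $\int_0^\infty e^{-t}\,\dint t = 1$, this contribution is at most $\gamma_3$. For the other summand, the integration-by-parts identity $\BE\int (D_xF)(-D_xL^{-1}F)\,\lambda(\dint x) = \BE[F\cdot(-LL^{-1}F)] = \BV F = 1$ lets me bound the absolute value by $\sqrt{\BV G}$, where $G := \int (D_xF)(-D_xL^{-1}F)\,\lambda(\dint x)$. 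Now I iterate the first-order Poincar\'e inequality \eqref{eq:Poincare} by applying it to $G$ itself, reducing the task to estimating $\BE\int (D_zG)^2\,\lambda(\dint z)$. The discrete product rule yields
\begin{equation*}
D_zG = \int \Big[(D^2_{x,z}F)\,(-D_xL^{-1}F)(\eta+\delta_z) + (D_xF)\,(-D^2_{x,z}L^{-1}F)\Big] \lambda(\dint x),
\end{equation*}
and a second application of Mehler at the iterated level --- namely $D^2_{x,z}L^{-1}F = -\int_0^\infty e^{-2t} P_t D^2_{x,z}F\,\dint t$, derived from $D_z P_t = e^{-t} P_t D_z$ --- removes the remaining $L^{-1}$. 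Repeated Cauchy--Schwarz, first in the $\lambda(\dint x)$-integrals and then under the expectation, converts the cross term into the geometric-mean structure of $\gamma_1$ and the pure second-order piece into $\gamma_2$.

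The main obstacle is this last step. The product rule simultaneously generates a \emph{shifted} first-order term $(-D_xL^{-1}F)(\eta+\delta_z)$ and a genuinely \emph{second-order} term $D^2L^{-1}F$; absorbing the shift requires invoking Mecke's formula under the $\lambda(\dint z)$-integration, while the second-order piece forces the iterated Mehler identity with decay rate $e^{-2t}$ in place of $e^{-t}$. Both contributions must then be assembled via carefully chosen Cauchy--Schwarz estimates so that the final bound factors across the intensity measure in precisely the form dictated by $\gamma_1$ and $\gamma_2$, with no spurious constants. Beyond this algebraic juggling, the genuine novelty is the systematic use of the Mehler representation of $L^{-1}$ via thinned Poisson processes: this is what makes all $L^{-1}$-terms disappear in a tractable way and explains why only $DF$ and $D^2F$ survive in the final estimate.
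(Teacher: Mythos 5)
Your proposal follows essentially the same route as the paper: start from the Malliavin--Stein bound on $d_W$ of \cite{PSTU10}, iterate the Poincar\'e inequality on $G:=\int(D_xF)(-D_xL^{-1}F)\,\lambda(\dint x)$ to control the first summand, eliminate $L^{-1}$ with Mehler's formula and its second-order version (your $e^{-2t}$ decay is the $s$-factor in $-D^2L^{-1}F=\int_0^1 sP_sD^2F\,\dint s$ after the change of variables $s=e^{-t}$), and bound the cubic term by contractivity of the semigroup. The one place where you have talked yourself into an unnecessary complication is the ``shifted'' factor: you do not need Mecke's formula. The identity $V(\eta+\delta_z)=V(\eta)+D_zV(\eta)$ turns your two-term product rule into the three-term expansion
$D_z\big((D_xF)(-D_xL^{-1}F)\big)=(D^2_{x,z}F)(-D_xL^{-1}F)+(D_xF)(-D^2_{x,z}L^{-1}F)+(D^2_{x,z}F)(-D^2_{x,z}L^{-1}F)$,
after which Mehler plus Cauchy--Schwarz (with the factor $3$ from $(a+b+c)^2\le 3(a^2+b^2+c^2)$) deliver exactly the $\gamma_1$ and $\gamma_2$ structure. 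Mecke would instead produce $\eta(\dint z)$-integrals that do not match the $\lambda^3$ form of the $\gamma$'s; the three-term split is what the paper's Proposition~\ref{prop:firstpart} actually uses, and it is the cleaner path.
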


The numbers $\gamma_1$ and $\gamma_2$ control the size of
the fluctuations of the second order difference operator $D^2F$
in a relative and an absolute way. Therefore, a small value of $\gamma_1+\gamma_2$
indicates that $F$ is close to an element of the {\em first Wiener chaos}
of $\eta$, that is,
of the $L^2$ space generated by the linear functionals of
$\hat{\eta}:=\eta-\lambda$ (see e.g.\ \cite{LaPe11, PeTa}).
Moreover, a small value of $\gamma_3$ heuristically indicates that the projection of $F$
on the first Wiener
chaos of $\eta$ is close in distribution to a Gaussian random variable
(see e.g.\ \cite[Corollary 3.4]{PSTU10}).

In order to state our bound on the Kolmogorov distance $d_K(F,N)$,
we will need the following additional terms (carrying heuristic
interpretations similar
to those of $\gamma_1, \gamma_2, \gamma_3$):
\begin{align*}
\gamma_4 & := \frac{1}{2} \big[\BE F^4\big]^{1/4} \int \big[\BE (D_xF)^4\big]^{3/4} \, \lambda(\dint x), \allowdisplaybreaks\\
\gamma_5 & := \bigg[\int \BE(D_xF)^4 \, \lambda(\dint x)\bigg]^{1/2}, \allowdisplaybreaks\\
\gamma_6 & := \bigg[\int 6\big[\BE(D_{x_1}F)^4\big]^{1/2} \big[\BE(D^2_{x_1,x_2}F)^4\big]^{1/2}
+3\BE(D^2_{x_1,x_2}F)^4 \, \lambda^{2}(\dint(x_1,x_2))\bigg]^{1/2}.
\end{align*}

\begin{theorem}\label{thm:mainKolmogorov}
Let $F\in \dom D$ be such that $\BE F=0$ and $\BV F=1$, and let $N$ be a standard
Gaussian random variable. Then,
$$
d_K(F,N) \leq \gamma_1+\gamma_2+\gamma_3+\gamma_4+\gamma_5+\gamma_6.
$$
\end{theorem}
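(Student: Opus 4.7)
My plan is to run the proof in two stages, mirroring the strategy for Theorem \ref{thm:maindW} (Wasserstein) but handling the extra terms produced by Stein's equation for the Kolmogorov distance.

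\textbf{Stage 1: a Stein--Malliavin starting bound.} I would begin from an intermediate Kolmogorov bound of Malliavin--Stein type established previously in \cite{EichelsbacherThaele2013,PSTU10,Schulte2012}. Applied to a Poisson functional $F$ with zero mean and unit variance, this produces a bound of the schematic form
\begin{align*}
d_K(F,N) &\leq \sqrt{\BV\langle DF,-DL^{-1}F\rangle}+\int \BE\bigl[(D_xF)^2|D_xL^{-1}F|\bigr]\,\lambda(\dint x)\\
&\quad +\bigl[\BE F^2\bigr]^{1/2}\!\!\int\bigl[\BE(D_xF)^4\bigr]^{1/2}\bigl[\BE(D_xL^{-1}F)^2\bigr]^{1/2}\lambda(\dint x)+\cdots,
\end{align*}
where the ellipsis collects the additional contributions arising because the Kolmogorov test functions $f_z$ solving Stein's equation satisfy weaker smoothness bounds than the Lipschitz ones used for Wasserstein; here $L$ is the Ornstein--Uhlenbeck generator and $-DL^{-1}F$ replaces $F$ in Stein's integration-by-parts identity $\BE[Fg(F)]=\BE\langle Dg(F),-DL^{-1}F\rangle$. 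The merit of this form is that every factor of $F$ or of $DL^{-1}F$ appears in a moment that is either a variance or a low-order integral.

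\textbf{Stage 2: eliminating $DL^{-1}F$ via Mehler's formula.} The crucial step is to replace the operator $DL^{-1}F$, which is inaccessible in applications, by $DF$ and $D^2F$. For this I would use the integrated Mehler formula developed earlier in the paper, which represents $-D_xL^{-1}F$ as an average over $s\in(0,1]$ of conditional expectations of $D_xf(\eta^{(s)})$, where $\eta^{(s)}$ is a coupled Poisson process distributionally equal to $\eta$ (obtained by thinning with retention probability $s$ and superposition with an independent thinning of an independent copy). By Jensen's inequality this immediately gives $\BE|D_xL^{-1}F|^p\leq \BE|D_xF|^p$ for every $p\geq 1$, and more generally mixed Hölder bounds such as $\BE[(D_xF)^2|D_xL^{-1}F|]\leq\BE|D_xF|^3$. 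Applying this term by term converts the low-order contributions to $\gamma_3$, $\gamma_4$ and $\gamma_5$: for $\gamma_4$ one first uses Hölder in $F$ and $(D_xF)^3$ with exponents $(4,4/3)$, then Mehler; $\gamma_3$ and $\gamma_5$ follow from direct substitution.

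\textbf{Stage 3: iterated Poincar\'e for the variance terms.} The two genuinely second-order quantities $\BV\langle DF,-DL^{-1}F\rangle$ and its higher-moment analogue feeding $\gamma_6$ require the hardest work. I would apply the Poincar\'e inequality \eqref{eq:Poincare} to the functional $G:=\langle DF,-DL^{-1}F\rangle$, expanding $D_yG$ by the product rule for $D$ into terms containing $D^2_{x,y}F\cdot D_xL^{-1}F$ and $D_xF\cdot D^2_{x,y}L^{-1}F$. A second Mehler representation, this time for $D^2_{x,y}L^{-1}F$, lets me replace the $L^{-1}$-second-derivative by a conditional average of $D^2_{x,y}f(\eta^{(s)})$; a Cauchy--Schwarz in $x$ after squaring $D_yG$ and integrating in $y$ then produces the triple integral with factor $[\BE(D_{x_1}F)^2(D_{x_2}F)^2]^{1/2}[\BE(D^2_{x_1,x_3}F)^2(D^2_{x_2,x_3}F)^2]^{1/2}$ under the square root, giving $\gamma_1^2$, plus a remainder giving $\gamma_2^2$ and a fourth-moment variant giving $\gamma_6^2$. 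Using $\sqrt{a+b}\leq\sqrt{a}+\sqrt{b}$ splits the square root into the desired sum.

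\textbf{Main obstacle.} The delicate part is in Stage 3, specifically the bookkeeping that keeps the exponents sharp. The Kolmogorov-specific term $\gamma_6$, unlike its Wasserstein siblings, requires a fourth-moment rather than second-moment control of $D_yG$, which forces one to apply a Poincar\'e-type bound on $G^2$ rather than on $G$, and hence to control $\BE(D^2_{x,y}L^{-1}F)^4$ by $\BE(D^2_{x,y}F)^4$ through Mehler. Avoiding the insertion of a superfluous factor of $F$ (which would destroy the rate) and carefully aligning the Hölder exponents so that the final estimate is expressed purely in $D^2F$-moments (and not in mixed moments involving $F$ itself) is the central technical difficulty.
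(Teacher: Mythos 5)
Your Stages 1 and 2 match the paper's strategy: start from the Malliavin--Stein Kolmogorov bound of Eichelsbacher--Th\"ale (formula \eqref{eq:EichelsbacherThaele}), treat its first two summands exactly as in the Wasserstein proof via Proposition~\ref{prop:firstpart} and Lemma~\ref{lem:boundDLinversep}, and handle the term with an extra factor of $|F|$ by H\"older and the Mehler-based moment bound to produce~$\gamma_4$. However, in Stage~3 and the ``main obstacle'' paragraph you misidentify where $\gamma_5$ and $\gamma_6$ come from, and the mechanism you propose would not work.

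You claim $\gamma_5$ and $\gamma_6$ arise from applying a Poincar\'e-type bound to $G^2$ with $G=\langle DF,-DL^{-1}F\rangle$, treating them as a ``fourth-moment analogue'' of $\gamma_1,\gamma_2$. That is not what happens, and one cannot make it happen: the variance of $G$ has already been fully accounted for by $\gamma_1+\gamma_2$, and the remaining term in \eqref{eq:EichelsbacherThaele} is
\begin{align*}
\sup_{t\in\R}\ \BE\int \bigl(D_x\I\{F>t\}\bigr)\,(D_xF)\,|D_xL^{-1}F|\ \lambda(\dint x),
\end{align*}
which is a supremum over level sets, not a moment of $G$. The paper handles this by an integration by parts in the variable $x$: set $g(x):=D_xF\cdot|D_xL^{-1}F|$, observe that $D_x\I\{F>t\}\cdot g(x)\geq 0$, and apply the refined duality relation of Lemma~\ref{lem:integrationbypartsRefined} to write the expression as $\BE[\I\{F>t\}\,\delta(g)]\leq[\BE\,\delta(g)^2]^{1/2}$. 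The Skorohod isometry bound of Corollary~\ref{cor:SkorohodIsometry},
$\BE\,\delta(g)^2\leq\BE\int g^2\,\dint\lambda + \BE\iint (D_yg(x))^2\,\dint\lambda\,\dint\lambda$,
combined with the product rule for $D_y$ applied to $g(x)$ and the Mehler-based moment inequalities, then yields exactly $\gamma_5^2+\gamma_6^2$. Two things your plan misses entirely: (i) one must move the difference operator off of $\I\{F>t\}$ before doing anything else, and this requires a special integration-by-parts statement because $\I\{F>t\}$ need not lie in $\dom D$ when $\lambda(\BX)=\infty$ -- an obstruction the paper resolves with Lemma~\ref{lem:integrationbypartsRefined} under a positivity hypothesis; (ii) the estimate after that lives in the Skorohod-integral world, not in the ``Poincar\'e-on-$G^2$'' world, so the relevant tool is Proposition~\ref{prop:isometry}/Corollary~\ref{cor:SkorohodIsometry}, not an iteration of \eqref{eq:Poincare}. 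Without these two ingredients the estimate for the indicator term -- and hence for $\gamma_5+\gamma_6$ -- does not close.
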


In view of \cite{PeccatiZheng2010}, our approach
can be extended so as to yield bounds
for the normal approximation of multivariate Poisson functionals,
while \cite{Peccati2011} allows in principle to deal with
Poisson approximations in the total variation distance.
Details and applications of such extensions will be reported elsewhere.

A first application of our general bounds concerns the asymptotic analysis of
Poisson functionals enjoying some weak form of {\em stabilization}
(see \cite{PenroseYukich2001,SchreiberSur}, as well as Section \ref{ss:literature}(c) below).
Our main result in this respect is Theorem \ref{thm:Stabilizing}, showing how the
`second order interactions' associated with a given Poisson functional
can be quantified in order to yield explicit bounds in normal approximations, without making
any further assumptions on the state space.
In order to motivate the reader, we
shall now present an important consequence of our Theorem \ref{thm:Stabilizing}.
For $t\geq 1$, we let $\eta_t$ be a Poisson process with intensity measure
$\lambda_t=t\lambda$, with $\lambda$ a fixed finite measure  on $\BX$.

%an explicit statement in this direction, that will be greatly generalised in
%Section \ref{s:stab}.
%For the rest of the paper, we shall denote by $\ell_d$, $d\geq 1$ the Lebesgue
%measure on $\R^d$.
%In the next statement, the symbol $\eta_t$, $t\geq 1$, stands for a Poisson
%measure with intensity $t\ell_d$,
%defined on a domain $H\subset \R^d$ such that $\ell_d(H)<\infty$.

\begin{proposition} \label{introex}
Let $F_t\in L^2_{\eta_t}$, $t\geq 1$, and assume there are finite constants
$p_1,p_2,c>0$ such that
\begin{equation}\label{e:i1}
\BE |D_xF_t|^{4+p_1}\leq c, \quad \lambda\text{-a.e. }x\in \BX, \quad t\geq 1,
\end{equation}
and
\begin{equation}\label{e:i2}
\BE |D^2_{x_1,x_2}F_t|^{4+p_2}\leq c,
\quad \lambda^2\text{-a.e. }(x_1,x_2)\in \BX^2, \quad t\geq 1.
\end{equation}
Moreover, assume that $\BV F_t/t >v$, $t\geq 1$, with $v>0$ and that
\begin{equation}\label{e:i3}
m:=\sup_{x\in\BX,\ t\geq 1} \int \BP(D_{x,y}^2F_t\neq 0)^{p_2/(16+4p_2)}\,\lambda_t(\dint y)<\infty.
\end{equation}
Let $N$ be a standard Gaussian random variable. Then, there exists a finite
constant $C$,
depending uniquely on $c,p_1,p_2,v, m$ and $\lambda(\BX)$, such that
$$
\max\left\{ d_W\bigg(\frac{F_t-\BE F_t}{\sqrt{\BV F_t}},N\bigg) ,
\,\,d_K\bigg(\frac{F_t-\BE F_t}{\sqrt{\BV F_t}},N\bigg)\right\} \leq C\, t^{-1/2},
\quad t\geq 1.
$$
\end{proposition}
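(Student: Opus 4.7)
The strategy is to apply Theorems~\ref{thm:maindW} and~\ref{thm:mainKolmogorov} to the standardized functional $\widetilde{F}_t := (F_t - \BE F_t)/\sqrt{\BV F_t}$, and to show that each of the six terms $\gamma_1,\dots,\gamma_6$ (evaluated at $\widetilde{F}_t$ with intensity $\lambda_t$) is of order $t^{-1/2}$. Since difference operators annihilate constants, $D_x\widetilde{F}_t = D_xF_t/\sqrt{\BV F_t}$ and $D^2_{x,y}\widetilde{F}_t = D^2_{x,y}F_t/\sqrt{\BV F_t}$; by hypothesis $\BV F_t > vt$.

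The engine of the proof is a H\"older estimate linking \eqref{e:i2} with \eqref{e:i3}. Setting $q := p_2/(16+4p_2)$ and using the trivial identity $f_1^2 f_2^2 = f_1^2\I\{f_1\neq 0\}\cdot f_2^2\I\{f_2\neq 0\}$, H\"older's inequality with exponents $((4+p_2)/2,\,(4+p_2)/2,\,2(4+p_2)/p_2,\,2(4+p_2)/p_2)$ combined with \eqref{e:i2} yields
\[
\bigl[\BE(D^2_{x_1,x_3}F_t)^2(D^2_{x_2,x_3}F_t)^2\bigr]^{1/2} \leq c^{2/(4+p_2)}\,\BP(D^2_{x_1,x_3}F_t\neq 0)^{q}\,\BP(D^2_{x_2,x_3}F_t\neq 0)^{q}.
\]
The exponent $q$ is precisely the one appearing in \eqref{e:i3}, which explains the specific form of that hypothesis.

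With this estimate in hand I can address all six terms. For $\gamma_1$, dominate the first-order factor by $c^{2/(4+p_1)}$ via Cauchy--Schwarz and \eqref{e:i1}, insert the core bound for the second-order factor, then apply Fubini to separate the $x_1$- and $x_2$-integrations, each controlled by $m$ via \eqref{e:i3}; the leftover $x_3$-integration contributes $\lambda_t(\BX) = t\lambda(\BX)$. Taking the outer square root and dividing by $\BV F_t > vt$ yields $O(t^{-1/2})$. The term $\gamma_2$ is treated identically without the first-order factor. For $\gamma_6$, a 4th-moment version of the core estimate (which extracts probability powers $2q$, further bounded above by $q$) reduces both summands to $O(t^{-1})$, and the outer square root is $O(t^{-1/2})$. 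The terms $\gamma_3$ and $\gamma_5$ only involve first-order differences and follow directly from \eqref{e:i1}.

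The sole remaining ingredient is a uniform bound $\BE\widetilde F_t^4 \leq C$ needed to control $\gamma_4$. I would establish this via a Rosenthal-type fourth-moment inequality for centred Poisson functionals,
\[
\BE(F_t - \BE F_t)^4 \leq C\Bigl[\BE\!\int(D_xF_t)^4\,\lambda_t(\dint x) + \Bigl(\BE\!\int(D_xF_t)^2\,\lambda_t(\dint x)\Bigr)^2\Bigr],
\]
which under \eqref{e:i1} is $O(t + t^2)$, whence $\BE\widetilde F_t^4 = O(1)$ after dividing by $(\BV F_t)^2 \geq v^2 t^2$; the residual integral in $\gamma_4$ is then a mild variant of the one in $\gamma_5$. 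The principal obstacle is the H\"older bookkeeping: the exponents must be chosen so that the power of $\BP(D^2_{x,y}F_t\neq 0)$ extracted from the second-order differences matches \emph{exactly} the stabilization exponent $q = p_2/(16+4p_2)$, ensuring that \eqref{e:i3} absorbs two of the three $\lambda_t$-integrations and leaves a single factor of $t$ in the numerator, which is then dominated by $\BV F_t \geq vt$ in the denominator.
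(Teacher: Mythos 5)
Your proposal is correct and follows essentially the same route as the paper: the paper proves Proposition~\ref{introex} by first establishing Theorem~\ref{thm:Stabilizing}, whose proof carries out exactly the H\"older/Cauchy--Schwarz manipulations you describe (the exponents $(4+p_2)/2$, $(4+p_2)/2$, $2(4+p_2)/p_2$, $2(4+p_2)/p_2$ producing $\BP(D^2F\neq 0)^{p_2/(16+4p_2)}$ after the outer square root, followed by Fubini and the uniform stabilization bound $m$ to control the $\lambda_t$-integrations). The only place you depart is in controlling $\BE\widetilde{F}_t^4$: you propose importing a Rosenthal-type fourth-moment inequality, which is a reasonable but external ingredient. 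The paper instead uses its own Lemma~\ref{lem:BoundFourthMoment}, which is an iterated Poincar\'e estimate derived inside the paper: applying the Poincar\'e inequality to $F^2$ gives
$\BE F^4 \leq \max\bigl\{256\bigl[\int[\BE(D_zF)^4]^{1/2}\,\lambda(\dint z)\bigr]^2,\; 4\int\BE(D_zF)^4\,\lambda(\dint z)+2\bigr\}$,
and under \eqref{e:i1}, $\BV F_t\geq vt$ and $\lambda_t(\BX)=t\lambda(\BX)$ this yields $\BE\widetilde{F}_t^4=O(1)$ directly, with no appeal to external Rosenthal-type bounds. Both arguments deliver the same conclusion, but using Lemma~\ref{lem:BoundFourthMoment} keeps the proof self-contained and is the intended route; you should also note explicitly that the exponents $2q$ and $4q$ appearing in the $\gamma_6$ terms are all bounded by $q$ only because $\BP\le 1$, which is what lets hypothesis \eqref{e:i3} (stated only for the exponent $q=p_2/(16+4p_2)$) control every second-order term.
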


The crucial assumption \eqref{e:i3} is intimately connected with
the theory of {\em stabilization}, developed in
\cite{BaryshnikovYukich2005,Penrose07,PenroseYukich2001,PenroseYukich2002,PenroseYukich2005}
and many other references; see \cite{SchreiberSur} for a survey.
Indeed, if $\BX\subset \R^d$ is compact and $\lambda$ equals the restriction of the Lebesgue measure to $\BX$,
then \eqref{e:i3} requires to bound
\begin{align*}
\int_{\BX} t\,\BP(D^2_{x,y}F_t\ne 0)^\alpha\,\dint y,
\end{align*}
uniformly in $x\in\BX$ and $t\ge 1$ for suitable $\alpha>0$, a task which is often simplified by some sort of
translation invariance of $D^2F_t$. Assume, for instance, that there exist finite random variables
$R_t(x,\eta_t)$ ({\em radii of stabilization}), such that
$D_xF_t$ does only depend on the restriction of $\eta_t$ to the ball
$B^d(x,R_t(x,\eta_t))$, where $B^d(x,r)$ is our generic notation for a ball with radius $r$
centred at $x$: formally, this means that, for every $t\geq 1$ and every $x\in \BX$,
\[
D_xf_t(\eta_t) = D_xf_t(\eta_t\cap B^d(x,R_t(x,\eta_t)) ),
\]
where $F_t\equiv f_t(\eta_t)$. Then, we need to show that
\begin{align*}
\sup_{x\in\BX,\ t\geq 1}\int_{\BX} t\,\BP\big(y\in B^d(x,R_t(x,\eta_t)) \mbox{\ or \  }  R_t(x,\eta_t+\delta_y)\neq R_t(x,\eta_t)\big)^\alpha\, \dint y<\infty.
\end{align*}
This is a close relative of the concept of {\em strong stabilization}
(or {\em add-one cost stabilization}) introduced in \cite{PenroseYukich2001}.

In Section \ref{sec:StochasticGeometry} we shall illustrate the power of
Proposition \ref{introex} and its more general version, Theorem \ref{thm:Stabilizing},
by fully developing two geometric applications, namely optimal
Berry-Esseen bounds in the normal approximation of:

\begin{itemize}
\item[(i)] statistics (including the total edge count and the total length) based on a $k$-nearest
neighbour graph with a Poisson input (generalising and improving previous
estimates from
\cite{AvramBertsimas1993,BaryshnikovYukich2005,BickelBreiman1983,Penrose07,PenroseYukich2001,PenroseYukich2005});

\item[(ii)] the intrinsic
  volumes of $k$-faces
associated with a Poisson-Voronoi tessellation (improving
the results in \cite{AvramBertsimas1993, BaryshnikovYukich2005, Heinrich1994, Penrose07, PenroseYukich2001, PenroseYukich2005}).

\end{itemize}

In our opinion, the new connection between the Stein-Malliavin approach and the
theory of stabilization
%opens a new direction of research, that is ripe for
%further generalisations and applications.
has a great potential for further generalisations and applications.

A second application (of a completely different nature) will be developed in
Section \ref{sec:ShotNoise},
where we apply our main theorems to study
integrals of non-linear functionals $\varphi(X_t)$ of general
Poisson {\em shot-noise processes} $(X_t)_{t\in\BY}$. Here, $\BY$ is
a measurable space and $X_t$ is a first order Wiener-It\^o integral
w.r.t.\ $\hat\eta$, whose integrand deterministically depends on the parameter $t\in\BY$.
Such Wiener-It\^o integrals provide fundamental examples of random fields, see
\cite{SpodBook} for a recent survey.
The numbers $\gamma_i$ can  be controlled by the
first and second derivative of $\varphi$ and our bounds yield
optimal rates of convergence in central limit theorems.
We will now illustrate our results with the (very) special example of
a {\em Ornstein-Uhlenbeck L\'evy process} (see e.g.\ \cite{BNS,PSTU10}).
To do so, we let $\eta$ be a Poisson process on $\R \times \R$
with intensity measure $\lambda(\dint u, \dint x)=\nu(\dint u) \, \dint x$,
where
%$\dint x$ is the differential of the Lebesgue measure and
the (L\'evy) measure $\nu$ on $\R$ satisfies
$$
\int |u|^\alpha \, \nu(\dint u)<\infty, \quad \alpha\in\{1,4+4p\},
$$
for some $p>0$. Define
$$
X_t:=\int \I\{x\le t\}ue^{-(t-x)}\,\hat{\eta}(\dint (u,x)),\quad t\in\R,
$$
and
$$
F_T:=\int^T_0 \varphi(X_t)\,\dint t,\quad T>0,
$$
where $\varphi:\R\to\R$ is a smooth function satisfying, for some $C'>0$,
$$
|\varphi(r)|+|\varphi'(r)|+|\varphi''(r)| \le C' \big(1+|r|^p\big), \quad r\in\R.
$$

\begin{proposition} \label{introexOU}
Let $(X_t)_{t\in\R}$ and $F_T$ be as above and let
$N$ be a standard Gaussian random variable. Assume that
\begin{equation}\label{eq:VariancePoissonShotNoise2}
\BV F_T \geq \sigma T, \quad T\geq t_0,
\end{equation}
for some $\sigma,t_0>0$. Then there is a constant $C>0$
depending on $\sigma$, $p$, $C'$ and $t_0$ such that
$$
d_K\bigg(\frac{F_T-\BE F_T}{\sqrt{\BV F_T}}, N\bigg) \leq \frac{C}{\sqrt{T}},
\quad T\geq t_0.
$$
\end{proposition}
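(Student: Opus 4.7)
The plan is to apply Theorem \ref{thm:mainKolmogorov} directly to the standardised functional $F := (F_T - \BE F_T)/\sqrt{\BV F_T}$ and show that each of the six quantities $\gamma_1,\ldots,\gamma_6$ in the definition of that bound is $O(T^{-1/2})$. Since $\BV F_T \ge \sigma T$ by assumption, each factor of $(\BV F_T)^{-1/2}$ produced by standardisation contributes a factor $T^{-1/2}$, so it suffices to show that all the raw moment integrals involving $DF_T$ and $D^{2}F_T$ are of order $T^{a}$ with $a$ matching the number of $D$-factors present.

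The basic tools are the two elementary identities $D_{(u,x)}X_{t} = u\, e^{-(t-x)} \I\{x\le t\}$, valid because $X_{t}$ is a Wiener-It\^o integral, and, by Taylor expansion applied to $\varphi$,
$$
|D_{(u,x)}\varphi(X_{t})| \le C' |u| e^{-(t-x)} \I\{x\le t\}\bigl(1 + (|X_{t}| + |u|)^{p}\bigr),
$$
$$
|D^{2}_{(u_1,x_1),(u_2,x_2)}\varphi(X_{t})| \le C' |u_1 u_2| e^{-(2t-x_1-x_2)} \I\{x_1\vee x_2 \le t\}\bigl(1 + (|X_{t}| + |u_1| + |u_2|)^{p}\bigr),
$$
which follow from the polynomial growth of $\varphi'$ and $\varphi''$. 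Integrating over $t\in[0,T]$ yields pointwise bounds for $DF_{T}$ and $D^{2}F_{T}$. The assumption $\int |u|^{\alpha}\,\nu(du)<\infty$ for $\alpha\in\{1,4+4p\}$ (hence, by interpolation, for every $\alpha\in[1,4+4p]$) combined with the standard moment formula for first-order Poisson integrals yields $\sup_{t\in\R}\BE |X_{t}|^{q} < \infty$ for every $q\le 4+4p$. Taking fourth moments gives
$$
\BE|D_{(u,x)}F_{T}|^{4} \le C_1(1+|u|^{4+4p})\phi(x)^{4}, \qquad \BE|D^{2}_{(u_1,x_1),(u_2,x_2)}F_{T}|^{4} \le C_2 \prod_{i=1}^{2}(1+|u_i|^{4+4p})\, \psi(x_1,x_2)^{4},
$$
where $\phi(x) := \min(1,e^{x})\I\{x\le T\}$ and $\psi(x_1,x_2)$ decays exponentially in $|x_1-x_2|$ with analogous cut-offs.

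The crucial feature is that, thanks to the exponential decay built into the O--U kernel, the spatial integrals $\int \phi(x)^{k}\,dx$ and $\iint \psi(x_1,x_2)^{k}\,dx_1\,dx_2$ grow only \emph{linearly} in $T$, whereas the L\'evy integrals $\int (1+|u|^{q})\,\nu(du)$ are finite constants. A straightforward bookkeeping of powers then shows that each of the integrals appearing inside $\gamma_1,\gamma_2,\gamma_3,\gamma_5,\gamma_6$ is $O(T)$, and after the $(\BV F_{T})^{-k/2}$ normalisations coming from Theorem \ref{thm:mainKolmogorov} each of these five quantities is $O(T^{-1/2})$.

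The main obstacle is $\gamma_4 = \tfrac{1}{2}\bigl(\BE F^{4}\bigr)^{1/4}\int \bigl(\BE (D_{x}F)^{4}\bigr)^{3/4}\,\lambda(dx)$. The integral factor is $O(T^{-1/2})$ by the same analysis, so the proof reduces to showing the independent bound $\BE(F_{T} - \BE F_{T})^{4}\le CT^{2}$, equivalently $(\BE F^{4})^{1/4}=O(1)$, which is not implied by the variance control we already possess. I would establish it by exploiting the exponential mixing of the O--U L\'evy process: writing
$$
\BE(F_{T}-\BE F_{T})^{4} = \int_{[0,T]^{4}} \BE\prod_{i=1}^{4}\bigl[\varphi(X_{t_i}) - \BE\varphi(X_{t_i})\bigr]\,dt_1\cdots dt_4
$$
and using that for smooth $\varphi$ of polynomial growth the joint centred cumulant of $\varphi(X_{t_1}),\ldots,\varphi(X_{t_4})$ is controlled by a constant multiple of the exponential of the largest gap between the ordered times, the multiple integral is $O(T^{2})$. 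Combining this with the previous estimate gives $\gamma_4=O(T^{-1/2})$, and Theorem \ref{thm:mainKolmogorov} yields the assertion.
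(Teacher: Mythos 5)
Your plan to apply Theorem~\ref{thm:mainKolmogorov} directly, bounding the difference operators via Taylor expansion and exploiting the exponential decay of the Ornstein--Uhlenbeck kernel to get linear growth in $T$ of the spatial integrals, is the same strategy the paper uses (there the argument is packaged in Theorem~\ref{thm:PoissonShotNoise} and Theorem~\ref{thm:stationary}, but the mechanism is identical). Your treatment of $\gamma_1,\gamma_2,\gamma_3,\gamma_5,\gamma_6$ follows the paper's computations in essence.

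The genuine gap is in your treatment of $\gamma_4$. You claim that the fourth-moment bound $\BE(F_T - \BE F_T)^4 \le CT^2$ ``is not implied by the variance control we already possess'' and propose to get it from an exponential-mixing argument controlling joint centred cumulants. You are not using the tool the paper has precisely for this purpose: Lemma~\ref{lem:BoundFourthMoment} bounds $\BE F^4$ (for centred $F$ with unit variance) purely in terms of $\int [\BE(D_zF)^4]^{1/2}\,\lambda(\dint z)$ and $\int \BE(D_zF)^4\,\lambda(\dint z)$, the very same quantities you have already shown to be $O(T)$. This lemma is a direct consequence of the Poincar\'e inequality applied to $F^2$ and requires no mixing estimates, no cumulant bookkeeping, and no appeal to the O--U structure beyond what you already established for the other $\gamma_i$. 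The mixing route you sketch would need a rigorous bound on the joint fourth centred cumulant of $\varphi(X_{t_1}),\ldots,\varphi(X_{t_4})$ in terms of the largest gap; for a general $\varphi$ with only polynomial growth and a general L\'evy measure $\nu$, this is a non-trivial and unnecessary detour, and the single sentence you offer is far from a proof. Replacing that sketch by an application of Lemma~\ref{lem:BoundFourthMoment} (exactly as done in the paper's proof of Theorem~\ref{thm:PoissonShotNoise}) both closes the gap and simplifies the argument.
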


A sufficient condition for \eqref{eq:VariancePoissonShotNoise2} to hold is that
$\tilde\varphi(r):=\BE[\varphi(X_0+r)-\varphi(X_0)]$
%is increasing on the interval $[0,u]$ and
satisfies  $\int^u_0 \tilde\varphi(r)\,\dint r\ne 0$
for some $u>0$ in the support of $\nu$, see Lemma \ref{l8.4} and the
subsequent discussion.
Proposition \ref{introexOU} largely extends the CLTs proved in
\cite{EichelsbacherThaele2013, PSTU10, PeccatiZheng2010}, that only considered linear and quadratic functionals.

%We believe that our results can be successfully applied
%to many more Poisson driven models of stochastic geometry and other areas
%of probability theory.

For the sake of conciseness, in Section 7 and Section 8 we will only present estimates involving the Kolmogorov distance $d_K$; since the bound in Theorem 1.1 is actually simpler than that in Theorem 1.2, analogous estimates hold also for the Wasserstein distance.

\subsection{Further connections with the existing literature}\label{ss:literature}

\noindent(a) {\it Malliavin-Stein techniques}. The present paper represents
the latest instalment in a very active direction of research,
based on the combined use of Stein's method and Malliavin calculus
in order to deduce explicit bounds for probabilistic approximations on the
Poisson space.  We refer the reader to \cite{PSTU10, PeccatiZheng2010}
for the first bounds of this type in the context of CLTs in the
Wasserstein and smoother distances, and to \cite{ReitznerSchulte} for the
first panoply of results indicating that Malliavin-Stein techniques
can be of great value for deriving quantitative CLTs in a geometric setting.
Other relevant contributions in this area are: \cite{EichelsbacherThaele2013, Schulte2012}
for explicit bounds in the Kolmogorov distance; \cite{HLS13} for applications
to additive functionals of Boolean models;
\cite{BP, DFR, LacPec_a, LacPec_b, LasPenSchTha}
for applications to geometric $U$-statistics and other non-linear functionals;
\cite{SchulteAAM} for CLTs in the framework of Poisson-Voronoi approximations;
\cite{Peccati2011, SchTh2012} for Poisson approximations and related geometric
applications. It is important to notice that all these references are based on
bounds involving not only the discrete gradient $D$, but also the so-called
{\em pseudo-inverse of the Ornstein-Uhlenbeck generator}, denoted in what
follows by $L^{-1}$  (see \eqref{eq:L-1} for a definition). Dealing with $L^{-1}$
is usually a very delicate issue, and typically requires excellent
estimates on the kernels appearing in a given Wiener-It\^o chaotic decomposition
(Wiener chaoses are indeed the eigenspaces of $L^{-1}$; see again \eqref{eq:L-1}).
While quantitative CLTs with optimal rates can still be obtained even for random
variables with an infinite chaotic expansion (see e.g.\ \cite{HLS13}),
the involved computation can be of a daunting technicality.
The estimates appearing in Theorem 1.1 and Theorem 1.2 above are the first
bounds obtained by Malliavin-Stein techniques that have a purely geometric nature,
and therefore do not require %one to rely on
detailed knowledge of the chaotic decomposition of the underlying random variable $F$.

\medskip

\noindent(b) {\it Other second order inequalities}. An early attempt at
developing second-order Poincar\'e inequalities on the Poisson space
can be found in V\' iquez \cite{Viquez12}. One should note
that the results developed in this work are quite limited in scope,
as the author does not make use of Mehler's formula, in such a way
that the only treatable examples are those referring to
random variables living in a fixed Wiener chaos.

\medskip

\noindent(c) {\it Stabilization}. As made clear by the title
and by the previous discussion, we regard the hypothesis
\eqref{e:i3} of Proposition \ref{introex}
as a weak form of stabilization. The powerful and
far-reaching concept of stabilization in the context of central limit theorems was introduced in its actual form
by Penrose and Yukich in \cite{PenroseYukich2001, PenroseYukich2002} and
Baryshnikov and Yukich in \cite{BaryshnikovYukich2005},
building on the set of techniques introduced by Kesten and Lee \cite{kestenlee}
and Lee \cite{lee}.
This notion typically applies to a collection of
geometric functionals $\{F_t : t\geq 1\}$ of the type
\begin{equation}\label{e:f1}
F_t = \int_H h_t (x, \eta_t\cap H) \, \eta_t(\dint x), \quad t\geq 1,
\end{equation}
where $H\subset \R^d$ has finite Lebesgue measure, $\eta_t$ is a Poisson
process on $\R^d$ with
intensity $t\ell_d$ (where $\ell_d$ denotes Lebesgue measure),
$h_1 : H\times {\bf N} \to \R$ is some measurable
translation-invariant mapping (with {\bf N} indicating the class of $\sigma$-finite
configurations on $\R^d$; see Section 2),
and $h_t(x,\mu):=h_1(t^{1/d}x,t^{1/d}\mu)$. There are many versions
of stabilization.
Add-one cost stabilization (see \cite{PenroseYukich2001,PenroseYukich2002} and
the discussion after Proposition \ref{introex}) requires
the differences
$D_x F_t = F_t(\eta_t+\delta_x)-F_t$, to stabilize around around any $x\in H$
in a similar sense as described after Proposition \ref{introex}. {\em Exponential stabilization} requires stabilization of the functions
$h_t (x, \eta_t\cap H)$ as well as an exponential tail-behaviour
of the associated radii of stabilization; see \cite{PenroseYukich2005} for
more details.

One important result in the area that is relevant for our
paper (see e.g.\ \cite[Section 2]{PenroseRosoman2008}, \cite[Theorem 2.1]{PenroseYukich2005}
or \cite[Theorem 4.26]{SchreiberSur}) is that, if $\BV F_t \sim t$, some moment conditions are satisfied, and $F_t$
are exponentially stabilizing, then the random variables
$\tilde{F}_t:=(F-\BE F_t)/\sqrt{\BV F_t}$
verify a CLT, when $t\to\infty$, with a rate of convergence in the
Kolmogorov distance of the order of $t^{-1/2} A(t)$, where $A(t)$ is some
positive function slowly diverging to infinity
(e.g., $A(t) =(\log t)^\alpha$, for $\alpha>0$);
to the best of our knowledge, the question of whether the
factor $A(t)$ could be
removed at all has remained open until now.
If the functionals $F_t$ appearing
in  Proposition \ref{introex} are given in the form \eqref{e:f1}, then it is not difficult
(albeit quite technical) to translate \eqref{e:i1}--\eqref{e:i3} in terms of
assumptions on the kernels $h_t$, see also Remark \ref{Remark62}. We do
not give the details here.

However, as demonstrated by Proposition \ref{introex}
and its generalizations, our approach enjoys at least two fundamental
advantages with respect to classical stabilization techniques: (i) the
assumptions in our quantitative results do not require that the
functionals $F_t$ are represented in the special form \eqref{e:f1},
and (ii) the rates of convergence given by our estimates seem to be
systematically better (when both approaches apply).
As already discussed, in Section \ref{ss:kn} we will provide
optimal Berry-Esseen bounds for the total edge length of the $k$-nearest
neighbour graph over Poisson inputs; this is a typical stabilizing
functional for which our rates  improve those in the existing literature.
The advantage of our method seems to be that we exploit Stein's method
using operators from stochastic analysis that are intrinsically associated
with the model at hand, and therefore
we do not need to rely on a discretised version of the problem.

\medskip

\noindent(d) {\it Iteration of Efron-Stein}. The idea of
proving normal approximation results by controlling second order interactions
between random points is also successfully applied in \cite{Chatt08},
where the author combines Stein's method with an iteration of the Efron-Stein
inequality, in order to deduce explicit rates of convergence in a number of CLTs
involving geometric functionals over binomial inputs. At this stage,
it is difficult to compare our techniques with those of \cite{Chatt08}
for mainly two reasons: (i) while it is possible to apply the results
from \cite{Chatt08} to compute Berry-Ess\'een bounds for Poisson functionals
(as demonstrated in the recent reference  \cite{ChaSen2013}, dealing with the
total length of the minimal spanning tree over a Poisson input),
this operation can only be accomplished via a discretisation argument
that is completely absent from our approach, and (ii) the techniques of
\cite{Chatt08} are specifically devised for deducing rates of convergence in
the Wasserstein distance, whereas bounds in the Kolmogorov distance
are only obtained by using the standard relation between $d_K$ and $d_W$
(see e.g.\ \cite[formula (C.2.6)]{NP11}), a strategy which cannot be expected, in general, to yield the optimal rates.

\subsection{Plan}

This paper is organized as follows. After some preliminaries in
Section \ref{sec:preliminaries}, Mehler's formula is established in
Section \ref{sec:mehler}. It is the crucial argument in the proofs of
Theorem \ref{thm:maindW} and \ref{thm:mainKolmogorov}, which are given in
Section \ref{sec:proofs}. Section \ref{sec:Variancepositive} contains the
proof of several lower bounds for variances, that are helpful in applications,
in order to ensure that variances do not degenerate. In Section \ref{s:stab}
we consider the normal approximation of stabilizing Poisson functionals.
Section \ref{sec:StochasticGeometry} and Section \ref{sec:ShotNoise} contain
the applications of our results to problems from stochastic geometry and
to non-linear functionals of shot-noise processes.

\section{Preliminaries}\label{sec:preliminaries}

The reader is referred to the monograph \cite{Priv09} as well as to the paper \cite{LaPe11} for any unexplained definition or result.

Let $(\BX,\cX)$ be an arbitrary measurable space and let $\lambda$ be a
$\sigma$-finite measure on $\BX$ such that $\lambda(\BX)>0$. For $p>0$ and
$n\in\N$ we denote by $L^p(\lambda^n)$ the set of all measurable functions
$f: \BX^n\to\R$ such that $\int |f|^p \, \dint\lambda^n<\infty$.
We call a function $f:\BX^n\to\R$ {\it symmetric} if it is invariant under
permutations of its arguments, and denote by $L^p_s(\lambda^n)$ the set of
all $f\in L^p(\lambda^n)$ such that there exists a symmetric function $\hat{f}$ verifying $f = \hat{f}$, $\lambda^n$-a.e. In particular, the class $L^2_s(\lambda^n)$ is a Hilbert subspace of $L^2(\lambda^n)$. For $f,g\in L^2(\lambda^n)$
we define the inner product $\langle f,g\rangle_n:=\int f g \, \dint\lambda^n$
and the norm $\|f\|_n:=(\int f^2 \, \dint\lambda^n)^{1/2}$.

%\smallskip

For the rest of this paper, let $(\Omega, \mathcal{F},\BP)$ be the underlying
probability space. For $p>0$, let $L^p(\BP)$ denote the space of all
random variables $Y: \Omega \rightarrow \R$ such that $\BE|Y|^p<\infty$.
Let $\eta$ be a Poisson process in $\BX$ with intensity measure $\lambda$
defined on the underlying probability space $(\Omega, \mathcal{F},\BP)$.
As usual we interpret $\eta$ as a random element in the space
$\bN$ of integer-valued $\sigma$-finite measures $\mu$
on $\BX$ equipped with the smallest $\sigma$-field $\mathcal{N}$
making the mappings $\mu\mapsto\mu(B)$ measurable for all
$B\in\mathcal{X}$; see \cite{Kallenberg} or \cite{LaPe11}. We write $\hat{\eta}$ for the compensated random (signed) measure $\eta-\lambda$. By $L^p_\eta$, $p>0$, we denote the space of all random variables $F\in L^p(\BP)$
such that $F=f(\eta)$, $\BP$-a.s., for some measurable function
$f: \bN\rightarrow\R$; such a function $f$ (which is uniquely determined by $F$ up to sets of $\BP$-measure zero) is customarily called a {\em representative} of $F$.

%\smallskip

Let $f:\bN\rightarrow\R$ be a  measurable function. For $n\in\N$ and
$x_1,\dots,x_n\in\BX$ we define
\begin{align}\label{Dsymmetric}
D^n_{x_1,\ldots,x_n}f(\mu):=\sum_{J \subset \{1,2,\ldots,n\}}(-1)^{n-|J|}
f \Big(\mu+\sum_{j\in J}\delta_{x_j}\Big), \quad \mu\in \bN,
\end{align}
where $|J|$ denotes the number of elements of $J$. This
shows that the $n$-th order {\em difference operator} $D^n_{x_1,\ldots,x_n}$
is symmetric in $x_1,\ldots,x_n$, and that
$(\mu,x_1,\ldots,x_n)\mapsto D^n_{x_1,\ldots,x_n}f(\mu)$
is measurable. For fixed $\mu\in\bN$ the latter mapping is abbreviated as $D^nf(\mu)$.
For $F\in L^2_\eta$ with representative $f$ we define $D^nF=D^nf(\eta)$.
By the multivariate Mecke equation (see e.g.\ \cite[formula (2.10)]{LaPe11}) this definition
does $\lambda^n$-a.e.\ and $\BP$-a.s.\ not depend on the
choice of the representative $f$.

For $F,G\in L^2_\eta$ the
{\em Fock space representation} derived in \cite{LaPe11} says that
\begin{align}\label{covcha}
\BE [F G]=\BE[F]\BE[G]+\sum^\infty_{n=1}n! \langle f_n,g_n \rangle_n,
\end{align}
where $f_n:=\frac{1}{n!}\BE D^nF$, $g_n:=\frac{1}{n!}\BE D^nG$ and $f_n,g_n\in L^2_s(\lambda^n)$.
If $F=G$, \eqref{covcha} yields a formula for the variance of $F$.

Let us denote by $I_n(g)$ the $n$-th order {\em Wiener-It\^o integral} of $g\in L^2_s(\lambda^n)$ with respect to $\hat{\eta}$. Note that the Wiener-It\^o integrals satisfy the isometry and orthogonality relation
\begin{equation}\label{eq:IsometryIntegrals}
\BE I_n(g)I_m(h)= n! \, \I\{m=n\} \langle f,g\rangle_n, \quad g\in L^2_s(\lambda^n), h\in L^2_s(\lambda^m), n,m\in\N.
\end{equation}
 It is a well-known fact that every $F\in L^2_\eta$ admits a representation of the type
\begin{equation}\label{WIC}
F=\BE F + \sum_{n=1}^\infty I_n(f_n),
\end{equation}
where $f_n =\frac{1}{n!}\BE D^nF$ and the right-hand side
converges in $L^2(\BP)$. Such a representation is known as {\em Wiener-It\^o chaos expansion} of $F$;
in this general and explicit form the result was proved in \cite{LaPe11}.

Given $F$ as in \eqref{WIC}, we write $F\in\dom D$ if
\begin{align}\label{2.2}
\sum_{n=1}^\infty n n!\|f_n\|^2_n<\infty.
\end{align}
In this case, we have that, $\BP$-a.s.\ and for $\lambda$-a.e.\ $x\in \BX$,
\begin{align}\label{2.3}
D_xF=\sum_{n=1}^\infty n I_{n-1}(f_n(x,\cdot)),
\end{align}
cf.\ \cite{LaPe11} and the references given there.
Applying the Fock space relation \eqref{covcha} to $D_xF$, it is easy to see
that \eqref{2.2} is actually equivalent to the integrability condition \eqref{2.31}; see \cite[Lemma 3.1]{PeTh13}. The restriction of $D$ to the space $\dom D$ is usually called the {\it Malliavin derivative operator} associated with the Poisson measure $\eta$. %Similarly, for $F$ as in \eqref{WIC}, we write $F\in\dom D^2$ if
%\begin{align}\label{e:D2}
%\sum_{n=1}^\infty n^2 n!\|f_n\|^2_n<\infty.
%\end{align}
%In this case, we have that, $\BP$-a.s.\ and for $\lambda^2$-a.e.\ $(x,y)\in \BX^2$,
%\begin{align}\label{e:D2a}
%D^2_xF=\sum_{n=2}^\infty n(n-1) I_{n-1}(f_n(x,y,\cdot)),
%\end{align}
%(see again \cite{LaPe11}). Using once more \eqref{covcha}, one sees immediately that \eqref{e:D2} is actually equivalent to the integrability condition
%\[
%\BE \int_{\BX^2} (D^2_{x,y}F)^2 \lambda(\dint x)\lambda(\dint y) <\infty.
%\]

One can think of the difference operator $D$ as an operator mapping a
random variable to a random function. The
{\em Skorohod integral} (or {\em Kabanov-Skorohod integral} -- see \cite{Kab}) $\delta$ maps a random function $g$ from the space $L^2_\eta(\BP\otimes\lambda)$ of all elements of $L^2(\BP\otimes\lambda)$ that are $\BP\otimes\lambda$-a.e.\ of the form $g(\omega,x)=\tilde{g}(\eta(\omega),x)$ for some measurable $\tilde{g}$ to a random variable.
Its domain $\dom\delta$ is the set of all $g\in L^2_\eta(\BP\otimes\lambda)$ having Wiener-It\^o chaos expansions
\begin{align}\label{2.4}
g(x)=\sum_{n=0}^\infty I_n(g_n(x,\cdot)), \quad x\in\BX,
\end{align}
with $I_0(c)=c$ for $c\in\R$ and measurable functions $g_n: \BX^{n+1}\to\R$, $n\in\N\cup\{0\}$, that are symmetric
in the last $n$ variables and satisfy
\begin{equation}\label{eq:domdelta}
\sum_{n=0}^\infty (n+1)! \|\tilde{g}_n\|_{n+1}^2<\infty.
\end{equation}
Here $\tilde{h}: \BX^m\to\R$ stands for the symmetrization
$$
\tilde{h}(x_1,\hdots,x_m)=\frac{1}{m!}
\sum_{\sigma\in{\operatorname{Per}}(m)} h(x_{\sigma(1)}, \hdots,x_{\sigma(m)})
$$
of $h:\BX^m\to\R$, where ${\operatorname{Per}}(m)$ denotes the group
of all permutations of $\{1,\hdots,m\}$. Now the Skorohod integral of $g\in\dom\delta$
is defined as
$$
\delta(g)=\sum_{n=0}^\infty I_{n+1}(\tilde{g}_n).
$$

%\smallskip

The third operator we will use in the following is the so-called
{\em Ornstein-Uhlenbeck generator}. Its domain is given by
all $F\in L^2_\eta$ satisfying
$$
\sum^\infty_{n=1}n^2 n! \|f\|^2_n<\infty.
$$
In this case one defines
$$%\begin{equation}\label{OU}
LF=- \sum_{n=1}^\infty n I_n(f_n).
$$%\end{equation}
The (pseudo) {\em inverse} $L^{-1}$ of $L$ is given by
\begin{equation}\label{eq:L-1}
L^{-1}F:=- \sum_{n=1}^\infty \frac{1}{n} I_n(f_n).
\end{equation}
Note that the random variable $L^{-1}F$ is a well-defined element of $L^2_\eta$ for every $F\in L^2_\eta$.

%\smallskip

The following integration by parts formula
\cite{LaPe11,Priv09} shows that the
difference operator and the
Skorohod integral can be seen as dual operators.

\begin{lemma}\label{lem:integrationbyparts}
Let $F\in\dom D$ and $g\in\dom\delta$. Then
$$
\BE\int D_xF \, g(x) \, \lambda(\dint x)=\BE[F \, \delta(g)].
$$
\end{lemma}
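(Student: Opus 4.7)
The plan is to prove the duality formula by expanding both $F$ and $g$ in their Wiener--It\^o chaoses and matching coefficients through the isometry relation \eqref{eq:IsometryIntegrals}. First I would write the chaos expansion \eqref{WIC} of $F\in\dom D$ as $F=\BE F+\sum_{n\ge 1}I_n(f_n)$ with $f_n=\frac{1}{n!}\BE D^nF\in L_s^2(\lambda^n)$, and the expansion \eqref{2.4} of $g\in\dom\delta$ as $g(x)=\sum_{n\ge 0}I_n(g_n(x,\cdot))$ with kernels $g_n$ symmetric in the last $n$ variables. By formula \eqref{2.3} the Malliavin derivative reads $D_xF=\sum_{n\ge 1}nI_{n-1}(f_n(x,\cdot))$, $\lambda$-a.e.\ and $\BP$-a.s.

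Second, I would compute the left-hand side. For $\lambda$-a.e.\ $x\in\BX$, using \eqref{eq:IsometryIntegrals} to evaluate $\BE[I_{n-1}(f_n(x,\cdot))I_m(g_m(x,\cdot))]$ (which vanishes unless $m=n-1$), one obtains
\begin{align*}
\BE[D_xF\cdot g(x)]=\sum_{n\ge 1}n\,(n-1)!\,\langle f_n(x,\cdot),g_{n-1}(x,\cdot)\rangle_{n-1}
=\sum_{n\ge 1}n!\,\langle f_n(x,\cdot),g_{n-1}(x,\cdot)\rangle_{n-1}.
\end{align*}
Integrating in $x$ against $\lambda$ and applying Fubini --- justified by the integrability hypotheses \eqref{2.2} on $F$ and \eqref{eq:domdelta} on $g$ together with Cauchy--Schwarz on each chaos --- yields
\begin{align*}
\BE\int D_xF\,g(x)\,\lambda(\dint x)=\sum_{n\ge 1}n!\,\langle f_n,g_{n-1}\rangle_n.
\end{align*}
Since $f_n$ is symmetric, $\langle f_n,g_{n-1}\rangle_n=\langle f_n,\tilde g_{n-1}\rangle_n$ after replacing $g_{n-1}$ with its symmetrization in the last $n$ variables.

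Third, I would compute the right-hand side from the definition $\delta(g)=\sum_{n\ge 0}I_{n+1}(\tilde g_n)=\sum_{n\ge 1}I_n(\tilde g_{n-1})$. The constant term $\BE F$ contributes nothing because $\BE I_n(\tilde g_{n-1})=0$ for $n\ge 1$; orthogonality \eqref{eq:IsometryIntegrals} of Wiener--It\^o integrals of distinct orders then gives
\begin{align*}
\BE[F\cdot\delta(g)]=\sum_{n\ge 1}n!\,\langle f_n,\tilde g_{n-1}\rangle_n,
\end{align*}
which coincides with the expression obtained for the left-hand side.

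The main obstacle is the analytic bookkeeping: one must justify the interchange of expectation, $\lambda$-integration, and the infinite chaos sums, and verify termwise that $f_n(x,\cdot)\in L^2_s(\lambda^{n-1})$ for $\lambda$-a.e.\ $x$ so that the inner products are well defined. Both points follow from the hypotheses \eqref{2.2} and \eqref{eq:domdelta}, the Cauchy--Schwarz inequality applied chaos by chaos, and the Fock space identity \eqref{covcha} applied to $D_xF$, which (as noted in the excerpt, following \cite{PeTh13}) shows that \eqref{2.2} is equivalent to $\BE\int(D_xF)^2\,\lambda(\dint x)<\infty$. Once these convergence issues are settled, the identity follows purely from the isometry \eqref{eq:IsometryIntegrals} and the symmetry of each $f_n$.
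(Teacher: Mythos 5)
The paper states this lemma without proof, citing it to \cite{LaPe11,Priv09}; those references establish it precisely by the chaos-expansion argument you give. Your proof is correct and is essentially the standard one: expand $F$ and $g$ in Wiener--It\^o chaos, use the isometry \eqref{eq:IsometryIntegrals} termwise, and observe that the pairing $\langle f_n,g_{n-1}\rangle_n$ is unchanged by symmetrizing $g_{n-1}$ because $f_n$ is already symmetric. The Fubini/interchange steps are justified exactly as you indicate: $g\in L^2_\eta(\BP\otimes\lambda)$ gives $\sum_{n\ge 0}n!\,\|g_n\|_{n+1}^2<\infty$, while $F\in\dom D$ gives $\sum_{n\ge 1}n\,n!\,\|f_n\|_n^2<\infty$, and the chain of Cauchy--Schwarz estimates on $\lambda^{n-1}$, on $\lambda$, and then on $\ell^2$ in $n$ bounds $\sum_{n\ge 1}n!\int|\langle f_n(x,\cdot),g_{n-1}(x,\cdot)\rangle_{n-1}|\,\lambda(\dint x)$ by the product of these two quantities.
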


In the previous formula, one needs that $F\in \dom D$ (although $D_xF$ is still defined otherwise). Obviously, $\I\{F>t\}$ is in $L^2_\eta$ for any $F\in L^2_\eta$ and $t\in\R$, but it is unclear whether $\I\{F>t\}\in\dom D$ whenever the underlying intensity $\lambda$ is such that $\lambda(\BX) = \infty$. To overcome this difficulty, we shall need the following special integration by parts formula, for which we assume slightly more than $g\in\dom D$ (because of the missing symmetrization in \eqref{eq:assumptiong}).

\begin{lemma}\label{lem:integrationbypartsRefined}
Let $F\in L^2_\eta$, $s\in \R$ and $g\in L^2_\eta(\BP\otimes \lambda)$ such that
\begin{equation}\label{eq:assumptiong}
\sum_{n=0}^\infty (n+1)! \|g_n\|_{n+1}^2<\infty
\end{equation}
and assume that $D_x\I\{F>s\} g(x)\geq 0$, $\BP$-a.s., $\lambda$-a.e.\ $x\in\BX$. Then $g\in\dom \delta$ and
$$
\BE\int D_x\I\{F>s \} \, g(x) \, \lambda(\dint x)=\BE[\I\{F>s\} \, \delta(g)].
$$
\end{lemma}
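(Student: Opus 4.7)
The plan is to compute both sides of the claimed identity through their Wiener--It\^o chaotic decompositions and match them term by term. The conceptual difficulty is that $\I\{F>s\}$ need not belong to $\dom D$ when $\lambda(\BX)=\infty$, so Lemma \ref{lem:integrationbyparts} cannot be invoked directly; the key observation is that for each fixed $x$ the random variable $D_x\I\{F>s\}$ is bounded by $1$ and therefore always admits a chaos expansion in $L^2(\BP)$. First, I would record that $g\in\dom\delta$: since symmetrization is a contraction in $L^2(\lambda^{n+1})$, $\|\tilde g_n\|_{n+1}\le\|g_n\|_{n+1}$, so \eqref{eq:assumptiong} directly implies \eqref{eq:domdelta}. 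The sign hypothesis then makes Tonelli's theorem applicable and yields
\[
\BE\int D_x\I\{F>s\}\,g(x)\,\lambda(\dint x)=\int\BE\bigl[D_x\I\{F>s\}\,g(x)\bigr]\,\lambda(\dint x)\in[0,\infty].
\]

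Let $f_n^s:=\tfrac{1}{n!}\BE D^n\I\{F>s\}$ denote the chaos kernels of $\I\{F>s\}\in L^2_\eta$; by \eqref{covcha} one has $\sum_{n\ge 1}n!\|f_n^s\|_n^2=\BV\I\{F>s\}\le 1/4$. Since $D^n$ and $D_x$ commute and each $f_n^s$ is symmetric, the $n$-th symmetric chaos kernel of $D_x\I\{F>s\}\in L^2(\BP)$ equals $(n+1)f_{n+1}^s(x,\cdot)$ for every $x$. Condition \eqref{eq:assumptiong} also guarantees that $g(x)=\sum_n I_n(g_n(x,\cdot))$ converges in $L^2(\BP)$ for $\lambda$-a.e.\ $x$. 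Using $|D_x\I\{F>s\}|\le 1$ (so that $H\mapsto\BE[D_x\I\{F>s\}\,H]$ is continuous on $L^2(\BP)$) together with the isometry \eqref{eq:IsometryIntegrals}, I obtain
\[
\BE\bigl[D_x\I\{F>s\}\,g(x)\bigr]=\sum_{n\ge 0}(n+1)!\,\bigl\langle f_{n+1}^s(x,\cdot),g_n(x,\cdot)\bigr\rangle_n \qquad \text{for $\lambda$-a.e.\ }x,
\]
while the parallel expansion $\delta(g)=\sum_n I_{n+1}(\tilde g_n)$, combined with \eqref{eq:IsometryIntegrals} and the symmetry of $f_{n+1}^s$, gives $\BE[\I\{F>s\}\delta(g)]=\sum_{n\ge 0}(n+1)!\,\langle f_{n+1}^s,g_n\rangle_{n+1}$.

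It therefore remains to interchange the integral over $x$ with the sum over $n$ in the previous display. Three successive Cauchy--Schwarz estimates (in $L^2(\lambda^n)$, then in $L^2(\lambda)$, then on the series) yield
\[
\int\sum_{n\ge 0}(n+1)!\,\bigl|\langle f_{n+1}^s(x,\cdot),g_n(x,\cdot)\rangle_n\bigr|\,\lambda(\dint x)\le\Bigl(\sum_{n\ge 0}(n+1)!\|f_{n+1}^s\|_{n+1}^2\Bigr)^{1/2}\Bigl(\sum_{n\ge 0}(n+1)!\|g_n\|_{n+1}^2\Bigr)^{1/2},
\]
whose first factor equals $\BV\I\{F>s\}^{1/2}\le 1/2$ and whose second factor is finite by \eqref{eq:assumptiong}. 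Fubini's theorem thereby applies and produces the series $\sum_n(n+1)!\,\langle f_{n+1}^s,g_n\rangle_{n+1}$, matching the expansion of $\BE[\I\{F>s\}\delta(g)]$ and closing the proof. The principal obstacle is executing these chaos manipulations in the absence of $\I\{F>s\}\in\dom D$: the sign hypothesis is what legitimizes the initial Tonelli step, and the universal variance bound $\BV\I\{F>s\}\le 1/4$ is precisely what supplies the control on the kernels $f_{n+1}^s$ needed to close the Fubini argument.
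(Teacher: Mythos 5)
Your proof is correct and follows essentially the same route as the paper: the paper simply cites Lemma~2.3 of Schulte (2012), where the identity is established (via chaos expansions of both sides) for $g$ with a finite chaos expansion, and then observes that a Cauchy--Schwarz argument lets one interchange summation and integration to cover general $g$ satisfying \eqref{eq:assumptiong}. What you have written is precisely that argument carried out in full: matching the kernels $(n+1)f_{n+1}^s(x,\cdot)$ of $D_x\I\{F>s\}$ against the kernels $g_n(x,\cdot)$ of $g(x)$, and using the universal bound $\BV\I\{F>s\}\le 1/4$ together with the triple Cauchy--Schwarz estimate to justify the interchange. The one place to be careful --- which you handle correctly --- is that the sign hypothesis is genuinely needed, not just for the initial Tonelli step, but also because $|D_x\I\{F>s\}|\le 1$ and $g\in L^2(\BP\otimes\lambda)$ do not by themselves give $\BP\otimes\lambda$-integrability of the product when $\lambda(\BX)=\infty$; only after the Cauchy--Schwarz bound on the kernel series, combined with nonnegativity, does absolute integrability follow. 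So the proposal is a self-contained version of the paper's proof rather than a different approach.
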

\begin{proof}
By the Cauchy-Schwarz inequality, \eqref{eq:assumptiong} implies \eqref{eq:domdelta} so that $g\in\dom\delta$. In Lemma 2.3 in \cite{Schulte2012} the assertion is stated for $g\in\dom D$ with finite chaos expansions. But the proof still holds for $g\in L^2(\BP\otimes \lambda)$ satisfying \eqref{eq:assumptiong}, since an application of the Cauchy-Schwarz inequality shows that one can interchange summation and integration in equation (9) in \cite{Schulte2012}.
\end{proof}

We next state a basic isometry property of the Skorohod integral.
Although special cases of this result are well-known (see e.g.\ \cite{Priv09})
we give the proof for the sake of completeness.

\begin{proposition}\label{prop:isometry}
Let $g\in L^2_\eta(\BP\otimes \lambda)$ be such that
\begin{align}\label{2.56}
% \BE \int g(x)^2\,\lambda(\dint x)+
\BE \iint (D_yg(x))^2\,\lambda(\dint x)\,\lambda(\dint y)<\infty.
\end{align}
Then, $g$ satisfies \eqref{eq:assumptiong} and $g\in\dom\delta$, and
\begin{align}\label{eq:iso}
\BE \delta(g)^2= \BE\int g(x)^2\,\lambda(\dint x)
+\BE \iint D_yg(x)D_xg(y)\,\lambda(\dint x)\,\lambda(\dint y).
\end{align}
\end{proposition}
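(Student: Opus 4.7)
The plan is to expand everything via the Wiener--It\^o chaos decomposition and verify the identity term by term. Write $g(x)=\sum_{n=0}^\infty I_n(g_n(x,\cdot))$ with each kernel $g_n(x,y_1,\ldots,y_n)$ symmetric in $y_1,\ldots,y_n$. The first task is to check that $g\in\dom\delta$. Using \eqref{2.3}, one has
\[
D_yg(x)=\sum_{n=1}^\infty n\,I_{n-1}(g_n(x,y,\cdot)),
\]
so the orthogonality relation \eqref{eq:IsometryIntegrals} yields
\[
\BE\iint (D_yg(x))^2\,\lambda(\dint x)\lambda(\dint y)=\sum_{n=1}^\infty n\cdot n!\,\|g_n\|_{n+1}^2.
\]
Combined with the contraction property $\|\tilde g_n\|_{n+1}\le\|g_n\|_{n+1}$ and the Jensen-type estimate $\|g_0\|_1^2\le \BE\int g(x)^2\lambda(\dint x)$, the hypothesis \eqref{2.56} forces the sum in \eqref{eq:assumptiong} to be finite, so $g\in\dom\delta$ and $\delta(g)=\sum_{n=0}^\infty I_{n+1}(\tilde g_n)$.

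Next I compute each of the three terms in \eqref{eq:iso}. Chaos orthogonality gives directly
\[
\BE\delta(g)^2=\sum_{n=0}^\infty(n+1)!\,\|\tilde g_n\|_{n+1}^2,\qquad \BE\int g(x)^2\lambda(\dint x)=\sum_{n=0}^\infty n!\,\|g_n\|_{n+1}^2.
\]
For the mixed term, a second application of \eqref{eq:IsometryIntegrals} to $D_yg(x)D_xg(y)$ yields
\[
\BE\iint D_yg(x)D_xg(y)\,\lambda(\dint x)\lambda(\dint y)=\sum_{n=1}^\infty n\cdot n!\,\langle g_n,g_n^\sharp\rangle_{n+1},
\]
where $g_n^\sharp(x_0,x_1,x_2,\ldots,x_n):=g_n(x_1,x_0,x_2,\ldots,x_n)$ is the transposition of the first two arguments of $g_n$.

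The main combinatorial step, which is where I expect the real work to lie, is the identity
\[
(n+1)!\,\|\tilde g_n\|_{n+1}^2=n!\,\|g_n\|_{n+1}^2+n\cdot n!\,\langle g_n,g_n^\sharp\rangle_{n+1}.
\]
Since $g_n$ is already symmetric in its last $n$ variables, the full symmetrization collapses to
\[
\tilde g_n(x_0,\ldots,x_n)=\frac{1}{n+1}\sum_{i=0}^n g_n(x_i,x_0,\ldots,\widehat{x_i},\ldots,x_n),
\]
and upon squaring and integrating over $\lambda^{n+1}$, the $n+1$ diagonal terms each contribute $\|g_n\|_{n+1}^2$ while the $n(n+1)$ off-diagonal cross-terms each equal $\langle g_n,g_n^\sharp\rangle_{n+1}$ by the partial symmetry; absorbing the prefactor $(n+1)!/(n+1)^2$ gives the asserted identity. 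Summing over $n$ and comparing with the three chaos expressions above produces exactly \eqref{eq:iso}. The interchanges of expectation, Lebesgue integration and summation used along the way are routine under \eqref{2.56} via Fubini and dominated convergence, so this is the lightest technical step; the real content is the partially-symmetric expansion of $\|\tilde g_n\|_{n+1}^2$.
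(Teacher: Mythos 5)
Your proposal is correct and takes essentially the same route as the paper: both expand everything in the Wiener--It\^o chaos, reduce to the identity $(n+1)!\|\tilde g_n\|_{n+1}^2=n!\|g_n\|_{n+1}^2+n\cdot n!\langle g_n,g_n^\sharp\rangle_{n+1}$ using the partial symmetry of $g_n$, and sum over $n$. The paper packages the interchanges by truncating to $g^{(m)}=\sum_{n\le m}I_n(g_n(x,\cdot))$, proving the identity exactly for each $m$, and passing to the $L^2$ limit; your direct term-by-term version is justified by the same fact you invoke (the partial sums of $D_yg(x)$ converge in $L^2(\BP\otimes\lambda^2)$ under \eqref{2.56}), so the two presentations are equivalent.
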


\begin{proof}
Suppose that $g$ is given by \eqref{2.4}. Assumption \eqref{2.56} implies that
$g(x)\in\dom D$ for $\lambda$-a.e.\ $x\in\BX$. We therefore deduce
from \eqref{2.3} that
\begin{align*}
h(x,y):=D_y g(x)=\sum^\infty_{n=1} n I_{n-1}(g_n(x,y,\cdot))
\end{align*}
$\BP$-a.s.\ and for $\lambda^2$-a.e.\ $(x,y)\in\BX^2$.
Using assumption \eqref{2.56} together with \eqref{eq:IsometryIntegrals}, one infers that
\begin{equation}\label{eq:boundSkorohod}
\sum^\infty_{n=1} nn! \|\tilde {g}_n\|^2_{n+1}\leq \sum^\infty_{n=1} n n! \|g_n\|^2_{n+1} = \BE \iint (D_yg(x))^2\,\lambda(\dint x)\,\lambda(\dint y) <\infty,
\end{equation}
yielding that $g$ satisfies \eqref{eq:assumptiong} and that $g\in {\rm dom} \, \delta$. Now define
\begin{align*}
g^{(m)}(x):=\sum^m_{n=0} I_n(g_n(x,\cdot)),\quad x\in\BX,
\end{align*}
for $m\in\N$. Then
\begin{align*}
\BE \delta(g^{(m)})^2=\sum^m_{n=0}\BE I_{n+1}(\tilde{g}_n)^2
=\sum^m_{n=0}(n+1)! \|\tilde{g}_n\|^2_{n+1}.
\end{align*}
Using the symmetry of the functions $g_n$ it is easy
to see that the latter sum equals
\begin{align}\label{2.11}
\sum^m_{n=0} n!\int g^2_n\,\dint\lambda^{n+1}
+\sum^m_{n=1} n n!\iint g_n(x,y,z)
g_n(y,x,z)\,\lambda^{2}(\dint (x,y))\,\lambda^{n-1}(\dint z),
\end{align}
with the obvious interpretation of the integration with respect to
$\lambda^0$. On the other hand, we have from \eqref{2.3} that
$$
D_y g^{(m)}(x)=\sum^m_{n=1} n I_{n-1}(g_n(x,y,\cdot)),
$$
so that
\begin{align*}
\BE\int g^{(m)}(x)^2\,\lambda(\dint x)
+\BE \iint D_yg^{(m)}(x)D_xg^{(m)}(y)\,\lambda(\dint x)\,\lambda(\dint y)
\end{align*}
coincides with \eqref{2.11}. Hence
\begin{align}\label{2.9}
\BE \delta(g^{(m)})^2= \BE\int g^{(m)}(x)^2\,\lambda(\dint x)
+\BE \iint D_yg^{(m)}(x)D_xg^{(m)}(y)\,\lambda(\dint x)\,\lambda(\dint y).
\end{align}
From the equality part in \eqref{eq:boundSkorohod} it follows that $h_m(x,y):=D_y g^{(m)}(x)$ converges to $h$ in
$L^2(\BP\otimes\lambda^2)$. Similarly,
$h'_m(x,y):=D_xg^{(m)}(y)$ converges towards $h'(x,y):=D_x g(y)$.
Together with
$$
\BE \int g^{(m)}(x)^2\,\lambda(\dint x)=\sum^m_{n=1} n! \|g_n\|^2_{n+1} \to \sum^\infty_{n=1} n! \|g_n\|^2_{n+1} = \BE \int g(x)^2\,\lambda(\dint x)
$$
as $m\to\infty$ we can now conclude that the right-hand side of
\eqref{2.9} tends to the right-hand side of the asserted
identity \eqref{eq:iso}.
On the other hand,
\begin{align*}
\BE \delta(g-g^{(m)})^2=\sum^\infty_{n=m+1}\BE I_{n+1}(\tilde{g}_n)^2
=\sum^\infty_{n=m+1}(n+1)! \|\tilde{g}_n\|^2_{n+1}
\le \sum^\infty_{n=m+1}(n+1)! \|g_n\|^2_{n+1}\to 0
\end{align*}
as $m\to\infty$. This concludes the proof.
\end{proof}

We will also exploit the following consequence of Proposition \ref{prop:isometry}.

\begin{corollary}\label{cor:SkorohodIsometry}
Let $g\in\dom\delta$. Then
\begin{align}\label{2.53}
\BE \delta(g)^2\le \BE\int g(x)^2\,\lambda(\dint x)
+\BE \iint (D_yg(x))^2\,\lambda(\dint x)\,\lambda(\dint y).
\end{align}
%Moreover, if $g\in L^2(\BP\otimes \lambda)$ is such that the right-hand side of \eqref{2.53} is finite, then $g\in {\rm dom}\,  \delta$.
\end{corollary}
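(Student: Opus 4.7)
The plan is to reduce Corollary~\ref{cor:SkorohodIsometry} to the exact identity \eqref{eq:iso} established in Proposition~\ref{prop:isometry} and then dominate the ``cross term'' on the right-hand side of \eqref{eq:iso} by the ``square term'' using Cauchy--Schwarz.

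First, I would dispose of the trivial case. If
$$
\BE \iint (D_y g(x))^2\,\lambda(\dint x)\,\lambda(\dint y)=\infty,
$$
then the right-hand side of \eqref{2.53} is infinite and the inequality is automatic. Hence we may assume that this double integral is finite, which is precisely hypothesis \eqref{2.56} of Proposition~\ref{prop:isometry}. Applied to our $g$ (noting that $g\in\dom\delta$ is already granted in the statement, and anyway follows from Proposition~\ref{prop:isometry}), this yields the identity
$$
\BE \delta(g)^2= \BE\int g(x)^2\,\lambda(\dint x)
+\BE \iint D_yg(x)\,D_xg(y)\,\lambda(\dint x)\,\lambda(\dint y).
$$

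Next, I would bound the cross term. By the Cauchy--Schwarz inequality applied on the product measure $\BP\otimes\lambda^2$, one has
$$
\BE \iint D_yg(x)\,D_xg(y)\,\lambda(\dint x)\,\lambda(\dint y)
\leq \Bigl(\BE \iint (D_yg(x))^2\,\lambda(\dint x)\,\lambda(\dint y)\Bigr)^{1/2}
\Bigl(\BE \iint (D_xg(y))^2\,\lambda(\dint x)\,\lambda(\dint y)\Bigr)^{1/2}.
$$
After relabeling the dummy variables $(x,y)$, the two factors on the right coincide, so the cross term is bounded above by $\BE \iint (D_yg(x))^2\,\lambda(\dint x)\,\lambda(\dint y)$. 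Substituting this bound into the identity from Proposition~\ref{prop:isometry} delivers \eqref{2.53}.

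There is no real obstacle here: the main content has already been packaged into Proposition~\ref{prop:isometry}, and the only thing to be careful about is the case when the double integral diverges, which I handle separately at the outset. The rest is a one-line Cauchy--Schwarz estimate exploiting the symmetric role of $x$ and $y$.
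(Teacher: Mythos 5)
Your proof is correct and follows exactly the same route as the paper: assume the right-hand side is finite so that Proposition~\ref{prop:isometry} applies, then bound the cross term via Cauchy--Schwarz on $\BP\otimes\lambda^2$ using the symmetry of the roles of $x$ and $y$. The paper states this a bit more tersely but there is no substantive difference.
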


\begin{proof}
We can assume that the right-hand side
of \eqref {2.53} is finite. Now the Cauchy-Schwarz inequality
implies that
$$
\BE \iint D_y g(x)D_x g(y)\,\lambda(\dint x)\,\lambda(\dint y)
\le \BE \iint (D_yg(x))^2\,\lambda(\dint x)\,\lambda(\dint y),
$$
and the result follows from Proposition \ref{prop:isometry}.
%To prove the last assertion in the statement, observe first that, if the right-hand side of \eqref{2.53} is finite, then $g(x) \in \DD$ for $\lambda$-a.e. $x\in \BX$. For every $x$ outside an exceptional set, one can therefore write explicitly the chaotic expansion of $D_y g(x)$ using \eqref{2.3}, and compute explicitly the expectations in the right-hand side of \eqref{2.53} by exploiting the isometric properties of multiple Wiener-It\^o integrals. The proof is concluded by observing that, if such expectations are finite, then inequality \eqref{e:ff} holds.
\end{proof}

In the sequel, we use the following Poincar\'e inequality to establish that a Poisson functional is in $L^2_\eta$ and to bound its variance.

\begin{proposition}\label{prop:Poincare}
For $F\in L^1_\eta$,
\begin{equation}\label{eq:PoincareL1}
\BE F^2 \leq (\BE F)^2 + \BE \int (D_xF)^2 \, \lambda(\dint x).
\end{equation}
In particular, $F\in L^2_\eta$ if the right-hand side is finite.
\end{proposition}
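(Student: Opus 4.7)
The plan is to establish the bound by a truncation argument, reducing the $L^1_\eta$ statement to the standard Poincar\'e inequality \eqref{eq:Poincare} for bounded (hence square-integrable) functionals, and then passing to the limit with Fatou's lemma and dominated convergence.

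First I would fix $M>0$ and introduce the truncated functional $F_M:=(F\wedge M)\vee(-M)$. Writing $\phi_M(t):=(t\wedge M)\vee(-M)$ and choosing any representative $f$ of $F$, the map $\phi_M\circ f$ is a representative of $F_M$, so $F_M\in L^2_\eta$ since $|F_M|\le M$. The two key observations about $F_M$ are: (i) because $\phi_M$ is $1$-Lipschitz, we have the pointwise domination
\[
|D_xF_M|=|\phi_M(f(\eta+\delta_x))-\phi_M(f(\eta))|\le |f(\eta+\delta_x)-f(\eta)|=|D_xF|,
\]
$\BP$-a.s.\ and for $\lambda$-a.e.\ $x\in\BX$; (ii) the classical Poincar\'e inequality \eqref{eq:Poincare} applied to the square-integrable functional $F_M$ gives
\[
\BE F_M^2=\BV F_M+(\BE F_M)^2\le \BE\!\int (D_xF_M)^2\,\lambda(\dint x)+(\BE F_M)^2.
\]
Combining (i) and (ii) yields
\[
\BE F_M^2\le (\BE F_M)^2+\BE\!\int (D_xF)^2\,\lambda(\dint x).
\]

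Next I would let $M\to\infty$. Since $|F_M|\le |F|\in L^1(\BP)$ and $F_M\to F$ pointwise, dominated convergence gives $\BE F_M\to \BE F$, hence $(\BE F_M)^2\to(\BE F)^2$. On the other hand, $F_M^2\to F^2$ pointwise with $F_M^2\ge 0$, so Fatou's lemma gives $\BE F^2\le \liminf_{M\to\infty}\BE F_M^2$. Putting the two together,
\[
\BE F^2\le (\BE F)^2+\BE\!\int (D_xF)^2\,\lambda(\dint x),
\]
which is \eqref{eq:PoincareL1}. The final statement that $F\in L^2_\eta$ when the right-hand side is finite is then immediate, since $F=f(\eta)$ already and the inequality just obtained shows $\BE F^2<\infty$.

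The argument is essentially routine; the only delicate point is that one cannot invoke the chaos expansion or Fock space representation for $F$ directly, because $F$ is a priori only integrable. Truncation sidesteps this by producing bounded functionals for which \eqref{eq:Poincare} applies unconditionally, while the $1$-Lipschitz property of $\phi_M$ ensures that the right-hand side does not inflate under truncation. I would expect the main thing to double-check is precisely the pointwise bound $|D_xF_M|\le |D_xF|$ and the measurability/representative issues, both of which are straightforward from the definition of $D_x$ via the representative $f$.
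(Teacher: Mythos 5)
Your proof is correct and follows essentially the same truncation argument as the paper: both cap $F$ at level $M$ (respectively $s$), observe that the truncated functional is in $L^2_\eta$ with $|D_xF_M|\le|D_xF|$, apply the $L^2$ Poincar\'e inequality, and pass to the limit. The only cosmetic difference is that you invoke Fatou's lemma for $\BE F_M^2$ where the paper uses monotone convergence; both are valid here.
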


\begin{proof}
For $F\in L^2_\eta$, \eqref{eq:PoincareL1} is a special case of Theorem 1.2 in \cite{LaPe11}. We extend it to $F\in L^1_\eta$ by the following truncation argument. For $s>0$ we define
$$
F_s=\I\{F>s\}s+ \I\{-s\leq F\leq s\}F -\I\{F<-s\}s %\begin{cases} s, & F>s,\\ F, & -s\leq F \leq s,\\ -s, & F<-s. \end{cases}
$$
By definition of $F_s$ we have $F_s\in L^2_\eta$ and $|D_xF_s|\leq |D_xF|$ for $\lambda$-a.e. $x\in\BX$. Together with the Poincar\'e inequality for $L^2$-functionals we obtain that
$$
\BE F_s^2 \leq (\BE F_s)^2+\BE\int (D_xF_s)^2 \, \lambda(\dint x)\leq  (\BE F_s)^2+\BE\int (D_xF)^2 \, \lambda(\dint x).
$$
By the monotone convergence theorem and the dominated convergence theorem, respectively, we have that $\BE F_s^2\to\BE F^2$ and $\BE F_s\to\BE F$ as $s\to\infty$. Hence letting $s\to\infty$ in the previous inequality yields the assertion.
\end{proof}

\section{Mehler's formula}\label{sec:mehler}

For $F\in L^1_\eta$ with representative $f$ we define
\begin{align}\label{ePt}
P_s F:=\int \BE[f(\eta^{(s)}+\mu)\mid\eta] \, \Pi_{(1-s)\lambda}(\dint\mu),\quad s\in[0,1],
\end{align}
where $\eta^{(s)}$ is a $s$-thinning of $\eta$
and where
$\Pi_{\lambda'}$ denotes the distribution of a Poisson process with intensity
measure $\lambda'$. The thinning  $\eta^{(s)}$ can be defined by first representing
$\eta$ as a finite or countable sum of Dirac measures at random
points (possible by the construction of a Poisson process) and then
removing these points independently of
each other with probability $1-s$, see \cite[p.\ 226]{Kallenberg}. This does also show that the mapping $(\omega,s)\mapsto P_sF(\omega)$
can be assumed to be measurable. Since
\begin{align}\label{convww}
\Pi_\lambda=\BE\int \I\{\eta^{(s)}+\mu\in\cdot\} \, \Pi_{(1-s)\lambda}(\dint\mu),
\end{align}
the definition of $P_sF$ does almost surely not depend
on the choice of the representative of $F$.
Equation \eqref{convww} implies in particular that
\begin{align}\label{eq32}
\BE P_sF =\BE F,\quad F\in L^1_\eta,
\end{align}
while Jensen's inequality and \eqref{convww} yield the contractivity property
\begin{align}\label{contr}
  \BE[|P_sF|^p]\le \BE[|F|^p],\quad  s\in[0,1],\, F\in L^1_\eta,
\end{align}
for every $p\geq 1$.

\begin{lemma}\label{lemmaMehler0} Let $F\in L^2_\eta$.
Then, for all $n\in\N$ and $s\in[0,1]$,
\begin{align}\label{91}
D^n_{x_1,\dots,x_n}(P_s F)=s^nP_s D^n_{x_1,\dots,x_n}F, \quad
\lambda^n\text{-a.e. } (x_1,\ldots,x_n)\in\BX^n,\;\BP\text{-a.s.}
\end{align}
In particular
\begin{align}\label{eq:EDnPs}
\BE D^n_{x_1,\dots,x_n} P_s F =s^n \BE D^n_{x_1,\dots,x_n} F, \quad
\lambda^n\text{-a.e. } (x_1,\dots,x_n)\in\BX^n.
\end{align}
\end{lemma}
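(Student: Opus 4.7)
The plan is to proceed by induction on $n$, with the base case $n=1$ reducing to a direct pointwise computation on a carefully chosen representative. Concretely, I would fix a measurable representative $f$ of $F$ and adopt as representative of $P_s F$ the function
$$H(\mu) := \int \BE\bigl[f(\mu^{(s)}+\mu')\bigr]\,\Pi_{(1-s)\lambda}(\dint\mu'), \quad \mu\in\bN,$$
where $\mu^{(s)}$ is an $s$-thinning of the \emph{deterministic} configuration $\mu$, independent of $\eta$ and of the auxiliary Poisson sample $\mu'$. By \eqref{convww} this coincides $\BP$-a.s.\ with $P_sF$, and by the contractivity \eqref{contr} we have $P_sF \in L^2_\eta$, so $D^n$ applies to it.

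For the base case $n=1$, the crucial observation is that the $s$-thinning of $\mu+\delta_x$ is obtained from that of $\mu$ by independently keeping the new point at $x$ with probability $s$ and discarding it otherwise. This gives the pointwise identity
$$H(\mu+\delta_x) = s\int \BE\bigl[f(\mu^{(s)}+\delta_x+\mu')\bigr]\,\Pi_{(1-s)\lambda}(\dint\mu') + (1-s)H(\mu),$$
whence
$$D_x H(\mu) = H(\mu+\delta_x)-H(\mu) = s\int \BE\bigl[(D_x f)(\mu^{(s)}+\mu')\bigr]\,\Pi_{(1-s)\lambda}(\dint\mu').$$
Evaluating at $\mu=\eta$ and recognising $D_x f$ as a representative of $D_x F$, the right-hand side is precisely $s\,P_s(D_x F)$, which proves \eqref{91} for $n=1$.

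For the inductive step, I would exploit the recursion $D^n_{x_1,\ldots,x_n} = D_{x_n}\circ D^{n-1}_{x_1,\ldots,x_{n-1}}$ at the level of representatives. The induction hypothesis yields
$$D^{n-1}_{x_1,\ldots,x_{n-1}}(P_s F) = s^{n-1} P_s\bigl(D^{n-1}_{x_1,\ldots,x_{n-1}} F\bigr)$$
$\BP$-a.s.\ and $\lambda^{n-1}$-a.e., and a second application of the base case (with the Poisson functional $D^{n-1}_{x_1,\ldots,x_{n-1}}F$ in place of $F$, which lies in $L^2_\eta$ for $\lambda^{n-1}$-a.e.\ choice of arguments) then produces
$$D^n_{x_1,\ldots,x_n}(P_s F) = s^{n-1}\,D_{x_n}\bigl(P_s(D^{n-1}_{x_1,\ldots,x_{n-1}} F)\bigr) = s^n P_s\bigl(D^n_{x_1,\ldots,x_n} F\bigr),$$
closing the induction.

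Finally, \eqref{eq:EDnPs} follows by taking expectations in \eqref{91} and invoking the mean-preservation property \eqref{eq32} of $P_s$ applied to the random variable $D^n_{x_1,\ldots,x_n}F$ for $\lambda^n$-a.e.\ fixed $(x_1,\ldots,x_n)$. The main technical delicacy is the bookkeeping of representatives and of $\lambda^k$-null exceptional sets as $k$ grows with the induction: by working with the explicit representative $H$, the key identity in the base case is genuinely pointwise in $\mu$, so the only exceptional sets arise from the abstract identification of $D^{n-1}_{x_1,\ldots,x_{n-1}}F$ as a $\lambda^{n-1}\otimes\BP$-a.e.\ defined map, which is exactly the qualifier that appears in the statement.
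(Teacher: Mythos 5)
Your argument is correct, but takes a genuinely different and more direct route than the paper. The paper's proof first verifies \eqref{91} by explicit computation on the special class of functionals $f(\mu)=e^{-\int v\,\dint\mu}$ with $\lambda(\{v>0\})<\infty$, where $P_sF$ has a closed form and both sides of \eqref{91} reduce to $s^n\prod_{i}(e^{-v(x_i)}-1)P_sF$; it then extends by linearity to the dense class $\mathbf{G}$ of such combinations, and finally passes to the limit for general $F\in L^2_\eta$ using contractivity of $P_s$ together with $L^1$-continuity of the difference operators on sets of finite $\lambda$-measure (Lemmas 2.1 and 2.3 of \cite{LaPe11}). Your proof instead extracts the identity directly from the probabilistic structure of the thinning: the observation that the $s$-thinning of $\mu+\delta_x$ keeps the added point $x$ with probability $s$, independently of the thinning of $\mu$, yields the genuinely pointwise recursion
\begin{equation*}
H(\mu+\delta_x)=s\int\BE\bigl[f(\mu^{(s)}+\delta_x+\mu')\bigr]\,\Pi_{(1-s)\lambda}(\dint\mu')+(1-s)H(\mu),
\end{equation*}
from which $D_xH(\mu)=s\int\BE[(D_xf)(\mu^{(s)}+\mu')]\,\Pi_{(1-s)\lambda}(\dint\mu')$ drops out immediately. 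This is more elementary, avoids the density and approximation step altogether, and makes transparent where the factor $s^n$ comes from. The price is that you must track measurability and integrability of the thinning representative $H$ by hand, whereas the paper absorbs these issues into the known approximation machinery.

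One technical slip: in the inductive step you re-invoke the base case with $G:=D^{n-1}_{x_1,\dots,x_{n-1}}F$ in place of $F$, asserting that $G\in L^2_\eta$ for $\lambda^{n-1}$-a.e.\ arguments. The Fock space representation \eqref{covcha} only gives $\BE|D^{n-1}_{x_1,\dots,x_{n-1}}F|<\infty$ for $\lambda^{n-1}$-a.e.\ arguments, i.e.\ $G\in L^1_\eta$, and square-integrability is not available in general. The fix is harmless: the base-case computation requires only $G\in L^1_\eta$ and $\BE|D_{x_n}G|<\infty$ for $\lambda$-a.e.\ $x_n$, and both hold for $\lambda^n$-a.e.\ $(x_1,\dots,x_n)$ because the full $n$-th order difference $D^n F$ of the original $F\in L^2_\eta$ is $\lambda^n\otimes\BP$-integrable by \eqref{covcha}. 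Alternatively, one can iterate the pointwise identity on $H$ itself to obtain $D^n_{x_1,\dots,x_n}H(\mu)=s^n\int\BE[(D^n_{x_1,\dots,x_n}f)(\mu^{(s)}+\mu')]\,\Pi_{(1-s)\lambda}(\dint\mu')$ directly, which sidesteps the need to apply $P_s$ to the intermediate functionals altogether.
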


\begin{proof}
Let $s\in[0,1]$. %We first prove \eqref{91} in the case $n=1$, that is
%\begin{align}\label{89}
%D(P_s F)=sP_s DF\quad \BP\otimes\lambda\text{-a.e.}
%\end{align}
To begin with, we assume that the representative of $F$
is given by $f(\mu)=e^{-\int v \, \dint\mu}$ for some $v:\BX\rightarrow[0,\infty)$
such that $\lambda(\{v>0\})<\infty$.
By the definition of a thinning,
$$
\BE\big[e^{-\int v \, \dint \eta^{(s)}}\mid \eta\big]=
\exp\bigg[\int \log\big((1-s)+s e^{-v(y)}\big) \, \eta(\dint y)\bigg],
$$
and it follows from Lemma 12.2 in \cite{Kallenberg} that
$$
\int \exp\bigg(- \int v \, \dint\mu\bigg) \, \Pi_{(1-s)\lambda}(\dint\mu)
=\exp\bigg[-(1-s)\int (1-e^{-v}) \, \dint\lambda \bigg].
$$
Hence, the definition \eqref{ePt} of the operator $P_s$ implies that
the following function $f_s:\bN\to\R$ is a representative of $P_sF$:
\begin{align*}
f_s(\mu):=\exp\bigg[-(1-s)\int \big(1-e^{-v}\big)\,\dint\lambda\bigg]
\exp\bigg[\int \log\big((1-s)+s e^{-v(y)}\big)
\, \mu(\dint y)\bigg].
\end{align*}
Therefore we obtain for any $x\in\BX$ that
\begin{align*}
D_xP_sF=f_s(\eta+\delta_x)-f_s(\eta)=s\big(e^{-v(x)}-1\big)f_s(\eta)
=s\big(e^{-v(x)}-1\big)P_sF.
\end{align*}
This identity can be iterated to yield for all $n\in\N$ and all
$(x_1,\ldots,x_n)\in\BX^n$ that
\begin{align*}
D^n_{x_1,\ldots,x_n}P_sF=s^n\prod^n_{i=1}\big(e^{-v(x_i)}-1\big)P_sF.
\end{align*}
On the other hand we have $\BP$-a.s.\ that
\begin{align*}
  P_sD^n_{x_1,\ldots,x_n}F=P_s\prod^n_{i=1}\big(e^{-v(x_i)}-1\big)F
= \prod^n_{i=1}\big(e^{-v(x_i)}-1\big)P_sF,
\end{align*}
so that \eqref{91} %\eqref{89}
holds for Poisson functionals of the given form.

By linearity, \eqref{91} %\eqref{89}
extends to all $F$ with a representative in the set $\mathbf{G}$
of all linear combinations of functions $f$ as above.
In view of \cite[Lemma 2.1]{LaPe11}, for any $F\in L^2_\eta$ with representative $f$ there are $f^k\in \mathbf{G}$, $k\in\N$,
satisfying $F^k:=f^k(\eta)\to F=f(\eta)$ in $L^2(\BP)$ as $k\to\infty$. The contractivity property \eqref{contr} implies that
\begin{align*}
\BE (P_sF^k-P_sF)^2=\BE (P_s(F^k-F))^2\le \BE(F^k-F)^2\to 0,
\end{align*}
as $k\to\infty$. Taking $B\in\cX$ with $\lambda(B)<\infty$,
it therefore follows from \cite[Lemma 2.3]{LaPe11} that
\begin{align*}
\BE\int_{B^n} |D^n_{x_1,\ldots,x_n}P_sF^k-D^n_{x_1,\ldots,x_n} P_sF| \,
\lambda(\dint (x_1,\ldots,x_n))\to 0,
\end{align*}
as $k\to\infty$. On the other hand we obtain from the Fock space representation \eqref{covcha} that $\BE |D^n_{x_1,\ldots,x_n}F|<\infty$
for $\lambda^n$-a.e.\ $(x_1,\ldots,x_n)\in\BX^n$, so that
linearity of $P_s$ and \eqref{contr} imply
\begin{align*}
  \BE\int_{B^n} |P_sD^n_{x_1,\ldots,x_n}&F^k-P_sD^n_{x_1,\ldots,x_n}F|
\,\lambda(\dint (x_1,\ldots,x_n))\\
&\le\int_{B^n} \BE |D^n_{x_1,\ldots,x_n}(F^k-F)| \, \,\lambda(\dint (x_1,\ldots,x_n)).
\end{align*}
Again, this latter integral tends to $0$ as $k\to\infty$.
Since \eqref{91} %\eqref{89}
holds for any $F^k$ we obtain that
\eqref{91} %\eqref{89}
holds $\BP\otimes(\lambda_B)^n$-a.e., and hence
also $\BP\otimes\lambda^n$-a.e.

%Iterating \eqref{89} yields \eqref{91}.
Taking the expectation in \eqref{91} and using \eqref{eq32} proves \eqref{eq:EDnPs}.
\end{proof}

The following result yields a pathwise representation of the
inverse Ornstein-Uhlenbeck operator using the operators
$\{P_s:s\in[0,1]\}$.

\begin{theorem}\label{MehlerOU} Let $F\in L^2_\eta$. If $\BE F=0$,
then we have $\BP$-a.s.\ that
\begin{align}\label{invOU}
L^{-1}F=-\int^1_0 s^{-1}P_sF \, \dint s.
\end{align}
\end{theorem}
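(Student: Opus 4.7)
My plan is to combine the Wiener–Itô chaos expansion of $F$ with the identity \eqref{eq:EDnPs} from Lemma \ref{lemmaMehler0}, in order to obtain an explicit chaos expansion of $P_sF$, and then integrate against $s^{-1}\,\dint s$ term by term.

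Write the chaos expansion \eqref{WIC} of $F$ as $F = \sum_{n=1}^\infty I_n(f_n)$ (there is no $n=0$ term since $\BE F = 0$), with $f_n = \frac{1}{n!}\BE D^nF$. The contractivity \eqref{contr} gives $P_sF\in L^2_\eta$, so $P_sF$ admits a chaos expansion with kernels $\frac{1}{n!}\BE D^n P_sF$. By \eqref{eq:EDnPs} these kernels equal $s^n f_n$, and by \eqref{eq32} the zero-order term of $P_sF$ is $\BE F = 0$. Hence
\begin{equation}\label{e:chaosPs}
P_sF = \sum_{n=1}^\infty s^n I_n(f_n) \quad \text{in } L^2(\BP), \qquad s\in[0,1].
\end{equation}

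Next, I would check that the right-hand side of \eqref{invOU} is $\BP$-a.s.\ well defined. The isometry \eqref{eq:IsometryIntegrals} applied to \eqref{e:chaosPs} gives
$$
\BE (P_sF)^2 = \sum_{n=1}^\infty s^{2n} n! \|f_n\|_n^2 \le s^2 \sum_{n=1}^\infty n! \|f_n\|_n^2 = s^2 \,\BE F^2, \qquad s\in[0,1],
$$
where the crucial factor $s^2$ comes precisely from $\BE F = 0$. Hence, by Jensen's inequality and Fubini,
$$
\BE \int_0^1 s^{-1} |P_sF| \, \dint s \le \int_0^1 s^{-1} \sqrt{\BE (P_sF)^2} \, \dint s \le \sqrt{\BE F^2} <\infty,
$$
so $\int_0^1 s^{-1} P_sF\,\dint s$ is $\BP$-a.s.\ absolutely convergent and defines an element of $L^1(\BP)$.

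Finally, I would identify this integral with $-L^{-1}F$ by truncation. Set $F^{(N)} := \sum_{n=1}^N I_n(f_n)$. Then \eqref{e:chaosPs} yields $P_sF^{(N)} = \sum_{n=1}^N s^n I_n(f_n)$, a finite sum, whence
$$
-\int_0^1 s^{-1} P_sF^{(N)}\,\dint s = -\sum_{n=1}^N I_n(f_n) \int_0^1 s^{n-1}\,\dint s = -\sum_{n=1}^N \frac{1}{n} I_n(f_n) \xrightarrow[N\to\infty]{L^2(\BP)} L^{-1}F,
$$
the last convergence following from the definition \eqref{eq:L-1} and the bound $\sum \frac{1}{n^2} n! \|f_n\|_n^2 \le \sum n! \|f_n\|_n^2 <\infty$. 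By linearity of $P_s$ and the variance bound applied to $F - F^{(N)}$, which also has mean zero,
$$
\BE \bigg(\int_0^1 s^{-1} (P_sF - P_sF^{(N)})\,\dint s\bigg)^{\!2} \le \bigg(\int_0^1 s^{-1}\sqrt{\BE(P_s(F-F^{(N)}))^2}\,\dint s\bigg)^{\!2} \le \BE(F - F^{(N)})^2 \to 0,
$$
so the left-hand sides converge in $L^2(\BP)$, and the two limits must agree $\BP$-a.s.\ up to a null set, giving \eqref{invOU}.

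The only non-routine point is the one already flagged: justifying the interchange of the infinite chaos sum and the integral against the singular weight $s^{-1}\,\dint s$. This is handled uniformly by the estimate $\BE (P_sF)^2 \le s^2 \BE F^2$, which is sharp precisely because the first-chaos contribution is the dominant one near $s=0$ and the mean-zero assumption rules out a constant term.
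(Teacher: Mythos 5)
Your proposal is correct and follows essentially the same route as the paper: derive the chaos expansion $P_sF=\sum_{n\ge 1}s^n I_n(f_n)$ from Lemma \ref{lemmaMehler0} and \eqref{eq:EDnPs}, establish $\BP$-a.s.\ integrability of $s^{-1}P_sF$ on $[0,1]$, and then identify the integral with $-L^{-1}F$ by truncating the chaos series and passing to the $L^2$ limit. The only (cosmetic) deviation is that you control the remainder via Minkowski's integral inequality together with the uniform estimate $\BE(P_sG)^2\le s^2\,\BE G^2$ for centred $G$, whereas the paper applies Jensen's inequality directly to the series and uses the bound $\sum_{n>m} n!\|f_n\|_n^2\int_0^1 s^{2n-2}\,\dint s$; both devices deliver the same conclusion.
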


\begin{proof}
Assume that $F$ is given as in \eqref{WIC}. Because of \eqref{contr}, we have that $P_sF\in L^2_\eta$. Applying \eqref{WIC} to $P_sF$ and using \eqref{eq:EDnPs}, we therefore infer that
\begin{equation}\label{e55}
P_sF=\BE F+\sum_{n=1}^\infty s^n I_n(f_n) =\sum_{n=1}^\infty s^n I_n(f_n),  \quad\BP\text{-a.s.},\, s\in[0,1],
\end{equation}
where we have used the fact that $F$ is centred. Relation \eqref{e55} can be used to show that the integral on the right-hand side of \eqref{invOU} is $\BP$-a.s.\ finite and defines a square-integrable random variable. To see this, just apply (in order) Jensen's inequality, \eqref{e55} and \eqref{eq:IsometryIntegrals} to deduce that
\[
\BE\left[ \left( \int_0^1 s^{-1} |P_sF| \, \dint s \right)^2\right]\leq \BE\left[  \int_0^1 s^{-2} \BE |P_sF|^2\,  \dint s \right]
= \sum^\infty_{n=1}n!\|f_n\|^2_n\int^1_0s^{2n-2} \, \dint s<\infty.
\]
Now,
$$
L^{-1}\left(\sum^m_{n=1} I_n(f_n)\right)= -\sum^m_{n=1}\frac{1}{n} I_n(f_n)=-\int^1_0s^{-1}\sum^m_{n=1}s^n I_n(f_n) \, \dint s,\quad
m\ge 1.
$$
Since $L^{-1}$ is a continuous operator from $L_\eta^{2}$ into itself, and in view of \eqref{eq:L-1}, we need to show that the right-hand
side of the above expression converges in $L_\eta^2$, as $m\to\infty$, to the right-hand of side
of \eqref{invOU}. Taking into account \eqref{e55} we hence have to show that
\begin{align*}
R_m:=\int^1_0s^{-1}\bigg(P_sF-\sum^m_{n=1}s^n I_n(f_n)\bigg) \dint s
=\int^1_0s^{-1}\bigg(\sum^\infty_{n=m+1}s^n I_n(f_n)\bigg) \dint s
\end{align*}
converges in $L_\eta^2$ to zero. We obtain
\begin{align*}
  \BE R^2_m\le\int^1_0s^{-2}\BE\bigg(\sum^\infty_{n=m+1}s^n I_n(f_n)\bigg)^2 \, \dint s
  =\sum^\infty_{n=m+1}n!\|f_n\|^2_n\int^1_0s^{2n-2} \, \dint s,
\end{align*}
which tends to zero as $m\to\infty$.
\end{proof}

We also record the following important consequence of \eqref{invOU}.

\begin{corollary}
For every $F\in L^2_\eta$ such that $\BE F=0$,
\begin{align}\label{eq:DL-1}
-DL^{-1}F=\int^1_0 P_sDF \, \dint s,\quad \BP\otimes\lambda\text{-a.e.},
\end{align}
and
\begin{align}\label{eq:D2L-1}
-D^2L^{-1}F=\int^1_0 s \, P_sD^2F \, \dint s,\quad \BP\otimes\lambda^2\text{-a.e.}
\end{align}
\end{corollary}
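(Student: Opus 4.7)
The plan is to apply the difference operators $D_x$ and $D^2_{x,y}$ termwise to the pathwise representation
\[
L^{-1}F = -\int_0^1 s^{-1} P_sF\,\dint s
\]
furnished by Theorem \ref{MehlerOU}, and then to substitute the commutation relations
\[
D_xP_sF = s\,P_sD_xF, \qquad D^2_{x,y}P_sF = s^2\,P_sD^2_{x,y}F,
\]
supplied by Lemma \ref{lemmaMehler0}. The powers of $s$ in the numerator cancel against those of $s^{-1}$ coming from Mehler's formula, yielding \eqref{eq:DL-1} and \eqref{eq:D2L-1} directly.

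To make this rigorous, I would start by fixing a jointly measurable representative $p_s(\mu)$ of $P_sF$, which is possible in view of the definition \eqref{ePt}, where the measurability of $(\omega,s)\mapsto P_sF(\omega)$ was already noted. Setting $\ell(\mu) := -\int_0^1 s^{-1}p_s(\mu)\,\dint s$ wherever the integral converges, the $L^2$-estimate established in the proof of Theorem \ref{MehlerOU} shows that $\ell(\eta)$ is $\BP$-a.s.\ finite and is a representative of $L^{-1}F$. By the multivariate Mecke equation, the corresponding $\BP$-null exceptional set pulls back to a $\BP\otimes\lambda^k$-null set under the map $(\omega,x_1,\dots,x_k)\mapsto\eta(\omega)+\delta_{x_1}+\dots+\delta_{x_k}$ for $k=1,2$, so that
\[
D^k_{x_1,\dots,x_k}L^{-1}F = -\int_0^1 s^{-1} D^k_{x_1,\dots,x_k}P_sF \, \dint s
\]
holds $\BP\otimes\lambda^k$-a.e. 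Inserting $D^k P_sF = s^k P_s D^k F$ from Lemma \ref{lemmaMehler0} and cancelling the corresponding powers of $s$ then gives both claimed identities.

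The only genuine technical obstacle is to upgrade Lemma \ref{lemmaMehler0}, which is phrased as a $\BP\otimes\lambda^k$-a.e.\ statement for each fixed $s\in[0,1]$, to a statement valid $\BP\otimes\lambda^k\otimes\dint s$-a.e., so that it can be substituted under the $s$-integral. With jointly measurable representatives in hand, this is a routine application of Fubini's theorem on $\Omega\times\BX^k\times[0,1]$: the per-$s$ full set on which the commutation holds assembles to a product-measurable set of full measure, which is exactly what is needed for identification of the two random functions.
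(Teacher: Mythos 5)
Your proof is correct, and it differs from the paper's in a way worth recording. Both arguments rest on (i) the $\BP$-a.s.\ finiteness of $\int_0^1 s^{-1}|P_sF|\,\dint s$ established inside the proof of Theorem \ref{MehlerOU}, and (ii) the commutation relation of Lemma \ref{lemmaMehler0}. The divergence is in how one obtains the integrability statement $\int_0^1 s^{-1}\,|f_s(\eta+\sum_{j\in J}\delta_{x_j})|\,\dint s<\infty$ needed to pass $D^k$ inside the $s$-integral. The paper proves it term by term: it first applies Lemma \ref{lemmaMehler0} to rewrite $D_xP_sF$ as $sP_sD_xF$, then bounds $\int_0^1\BE|P_sD_xF|\,\dint s\le\BE|D_xF|$ using contractivity and the Fock-space fact that $\BE|D_xF|<\infty$ for $\lambda$-a.e.\ $x$ (from \eqref{covcha}), and finally closes via the triangle inequality $|f_s(\eta+\delta_x)|\le|f_s(\eta)|+|D_xf_s(\eta)|$. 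Your route dispenses with all of this: once a jointly measurable representative $p_s(\mu)$ is fixed, the Poisson-functional-valued null set $A=\{\mu:\int_0^1 s^{-1}|p_s(\mu)|\,\dint s=\infty\}$ has $\BP(\eta\in A)=0$, and the multivariate Mecke equation immediately yields $\BP(\eta+\sum_{j\in J}\delta_{x_j}\in A)=0$ for $\lambda^{|J|}$-a.e.\ choice of the points, simultaneously over the finitely many $J\subset\{1,\dots,k\}$. This is shorter, requires no appeal to Lemma \ref{lemmaMehler0} at the integrability stage, and avoids the Fock-space bound $\BE|D_xF|<\infty$ entirely. You also correctly flag the Fubini upgrade of Lemma \ref{lemmaMehler0} from a per-$s$ a.e.\ statement to a $\BP\otimes\lambda^k\otimes\dint s$-a.e.\ one before it can be used under the integral sign; the paper uses this step but leaves it implicit (both when passing $D_x$ through $\BE\int_0^1\dint s$ to prove its bound and again at the final application of the lemma), so making it explicit is a genuine improvement in rigor rather than a redundancy.
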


\begin{proof} In the proof of Theorem \ref{MehlerOU} we have seen that
\begin{align*}
\BE \int^1_0 s^{-2} (P_sF)^2 \, \dint s
\le \sum^\infty_{n=1}n!\|f_n\|^2_n<\infty.
\end{align*}
In particular,
\begin{align}\label{10001}
\int^1_0 s^{-1} |P_sF| \, \dint s<\infty, \quad\BP\text{-a.s.}
\end{align}
Furthermore, Lemma \ref{lemmaMehler0} implies that,
for $\lambda$-a.e.\ $x\in \BX$,
\begin{align*}
\BE \int^1_0 s^{-1} |D_xP_sF| \, \dint s
=\int^1_0 \BE |P_sD_xF| \, \dint s\le \BE |D_xF|,
\end{align*}
where we have used $\BE |D_xF|<\infty$ for $\lambda$-a.e.\ $x\in\BX$ (which follows from \eqref{covcha}) and the contractivity property \eqref{contr}
to get the inequality. We obtain that
\begin{align}\label{10002}
\int^1_0 s^{-1} |D_xP_sF| \, \dint s<\infty,\quad \lambda\text{-a.e. } x\in\BX,\;\BP\text{-a.s.}
\end{align}
Denoting by $f_s$ a representative of $P_sF$, we derive from
\eqref{10001} and \eqref{10002} that
\begin{align*}
\int^1_0 s^{-1} |f_s(\eta+\delta_x)| \, \dint s<\infty \quad \text{and}\quad
\int^1_0 s^{-1} |f_s(\eta)| \, \dint s<\infty,
\end{align*}
for $\lambda$-a.e.\ $x\in\BX$ and $\BP$-a.s. Hence, the difference
operator of the right-hand side of \eqref{invOU} is the integrated
difference operator and \eqref{eq:DL-1} follows from
Lemma \ref{lemmaMehler0}.

The proof of \eqref{eq:D2L-1} is similar, using that
$\BE |D^2_{x_1,x_2}F|<\infty$ for $\lambda^2$-a.e.\ $(x_1,x_2)\in\BX^2$.
\end{proof}

%\begin{proof} The square-integrability of $F$ implies that, for $\lambda$-a.e.$\!$ $x\in \BX$, $\BE|D_xF|<\infty$ and, for $\lambda^2$-a.e.$\!$ $(x,y)\in \BX^2$, $\BE|D^2_{x,y}F|<\infty$. Using \eqref{ePt} we therefore deduce that, for $x$ and $(x,y)$ outside some exceptional set of measure zero, $\BE \int^1_0 | P_sD_{x}F| \, \dint s =\BE \int^1_0 | P_sD_{x}F| \, \dint s\infty $ and $\BE \int^1_0 s | P_sD_{x,y}F| \, \dint s<\infty $. For $x$ and $(x,y)$ outside such an exceptional set, we can therefore apply \eqref{invOU} so that, after exchanging the difference operators with the integral sign, one deduces the desired conclusion by applying Lemma \ref{lemmaMehler0} in the case $n=1$ and $n=2$.
%\end{proof}

\bigskip
In the following, we shall refer to the identity \eqref{invOU} as {\it Mehler's formula}. Note that this formula can be written as
$$
L^{-1}F=-\int^\infty_0 T_sF \, \dint s,
$$
where $T_sF:=P_{e^{-s}}F$ for $s\ge 0$. The family $\{T_s:s\ge 0\}$ of operators describes
a special example of {\em Glauber dynamics}. From \eqref{e55}, it follows in particular that
\begin{equation}\label{e:mehlertrue}
T_sF=\BE F+\sum_{n=1}^\infty e^{-ns} I_n(f_n), \quad\BP\text{-a.s.},\,s\ge 0,
\end{equation}
which was proven for the special case of a finite Poisson process with a diffuse intensity
measure in \cite{Priv09}. One should note that, for the sake of brevity, in this paper we slightly deviate from the standard terminology adopted in a Gaussian framework, where the equivalent of \eqref{e:mehlertrue} and \eqref{invOU} are called, respectively, `Mehler's formula' and `integrated Mehler's formula' (see e.g. \cite{NP11}).

We conclude this section with two useful inequalities.

\begin{lemma}\label{lem:boundDLinversep}
For $F\in L^2_\eta$ and $p\geq 1$ we have
$$
\BE |D_xL^{-1}F|^p \leq  \BE |D_xF|^p, \quad \lambda\text{-a.e. }x\in\BX,
$$
and
$$
\BE |D^2_{x,y}L^{-1}F|^p
\leq  \BE |D^2_{x,y}F|^p, \quad \lambda^2\text{-a.e. }(x,y)\in\BX^2.
$$
\end{lemma}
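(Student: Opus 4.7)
The plan is to combine the pathwise Mehler-type representations \eqref{eq:DL-1} and \eqref{eq:D2L-1} of $-DL^{-1}F$ and $-D^2L^{-1}F$ with the $L^p$-contractivity \eqref{contr} of the semigroup $\{P_s:s\in[0,1]\}$ and Jensen's inequality applied to the probability space $([0,1],\dint s)$.

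Without loss of generality I assume $\BE F = 0$, since neither $D_xF$ nor $L^{-1}F$ is affected by adding a constant to $F$. By \eqref{eq:DL-1}, for $\lambda$-a.e.\ $x\in\BX$ we have $-D_xL^{-1}F = \int_0^1 P_sD_xF\,\dint s$, $\BP$-almost surely. The triangle inequality for integrals, followed by Jensen's inequality applied to the convex function $t\mapsto|t|^p$ on $([0,1],\dint s)$, yields the pathwise bound
\[
|D_xL^{-1}F|^p \leq \Bigl(\int_0^1 |P_sD_xF|\,\dint s\Bigr)^p \leq \int_0^1 |P_sD_xF|^p\,\dint s.
\]
Taking expectations, applying Fubini (the integrand is nonnegative), and then the contractivity \eqref{contr} applied to $D_xF\in L^1_\eta$ gives
\[
\BE|D_xL^{-1}F|^p \leq \int_0^1 \BE|P_sD_xF|^p\,\dint s \leq \int_0^1 \BE|D_xF|^p\,\dint s = \BE|D_xF|^p,
\]
which is the first asserted inequality.

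For the second order inequality I would repeat the same argument, starting now from \eqref{eq:D2L-1}: the only change is the extra factor $s\in[0,1]$, which we simply bound by $1$ before applying Jensen. The identical chain (triangle inequality, Jensen on $[0,1]$, Fubini, contractivity) then yields $\BE|D^2_{x,y}L^{-1}F|^p \leq \BE|D^2_{x,y}F|^p$ for $\lambda^2$-a.e.\ $(x,y)\in\BX^2$.

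There is essentially no obstacle beyond bookkeeping. The only mildly delicate point is justifying the use of the contractivity \eqref{contr}, which is stated for elements of $L^1_\eta$: for $\lambda$-a.e.\ $x$ (respectively $\lambda^2$-a.e.\ $(x,y)$), the Fock space identity \eqref{covcha} applied to $F\in L^2_\eta$ guarantees $\BE|D_xF|<\infty$ (respectively $\BE|D^2_{x,y}F|<\infty$), so that $D_xF$ (respectively $D^2_{x,y}F$) indeed belongs to $L^1_\eta$; moreover, if $\BE|D_xF|^p=\infty$ (respectively $\BE|D^2_{x,y}F|^p=\infty$) the asserted bound is trivially true.
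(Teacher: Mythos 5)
Your proof is correct and takes essentially the same route as the paper: both start from the Mehler representations \eqref{eq:DL-1}, \eqref{eq:D2L-1} and apply Jensen's inequality. The only difference is one of modularity: you invoke the contractivity \eqref{contr} as an intermediate step, whereas the paper unwraps the definition of $P_s$ and applies Jensen directly to the nested integrals and the conditional expectation, collapsing them via \eqref{convww} --- the two presentations encode the same computation.
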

\begin{proof}
Let $f: \bN\to\R$ be a representative of $F$. Combining \eqref{eq:DL-1}
and the definition of $P_s$ leads to
$$
\BE |D_xL^{-1}F|^p = \BE \left|\int_0^1
\int \BE[D_xf(\eta^{(s)}+\mu) \mid \eta] \, \Pi_{(1-s)\lambda}(\dint\mu) \, \dint s\right|^p, \quad \lambda\text{-a.e. }x\in\BX.
$$
An application of Jensen's inequality with respect to the integrals
and the conditional expectation yields
$$
\BE |D_xL^{-1}F|^p \leq \BE \int_0^1 \int \BE[|D_xf(\eta^{(s)}+\mu)|^p \mid \eta] \,
\Pi_{(1-s)\lambda}(\dint\mu) \, \dint s, \quad \lambda\text{-a.e. }x\in\BX.
$$
Because of \eqref{convww} the right-hand side can be simplified to $\BE|D_xf(\eta)|^p$,
which concludes the proof of the first inequality. By \eqref{eq:D2L-1}
and analogous arguments as above, we obtain that, for $\lambda^2$-a.e. $(x,y)\in\BX^2$,
\begin{align*}
\BE |D^2_{x,y}L^{-1}F|^p & = \BE \left| \int_0^1 \int s \,
\BE[ D^2_{x,y}f(\eta^{(s)}+\mu) \mid \eta] \, \Pi_{(1-s)\lambda}(\dint\mu) \, \dint s \right|^p\\
& \leq \BE \int_0^1 \int  \BE[ |D^2_{x,y}f(\eta^{(s)}+\mu)|^p \mid \eta] \,
\Pi_{(1-s)\lambda}(\dint \mu) \, \dint s=\BE |D^2_{x,y}F|^p,
\end{align*}
which is the second inequality.
\end{proof}

\section{Proofs of Theorem \ref{thm:maindW}
and Theorem \ref{thm:mainKolmogorov}}\label{sec:proofs}

\subsection{Ancillary computations}

Our proofs of Theorem \ref{thm:maindW} and Theorem \ref{thm:mainKolmogorov} are based
on the following bounds, taken respectively from \cite[Theorem 3.1]{PSTU10} and \cite[Theorem 3.1]{EichelsbacherThaele2013},
concerning the Wasserstein and the Kolmogorov distance between the law of a given $F\in L^2_\eta$ satisfying $\BE F=0$ and $F\in\dom D$,
and the law of a standard Gaussian random variable $N$:

\begin{align}\label{eq:boundPSTU}
d_W(F ,N)
\le \BE \big|1-\int (D_xF) (-D_xL^{-1}F) \, \lambda(\dint x)\big|
+\BE \int (D_xF)^2|D_xL^{-1}F| \, \lambda(\dint x),
\end{align}

\begin{align}\label{eq:EichelsbacherThaele}
d_K(F,N) & \leq \BE\big| 1-\int (D_xF) (-D_xL^{-1}F) \, \lambda(\dint x)\big| +\frac{\sqrt{2\pi}}{8}\BE\int (D_xF)^2 |D_xL^{-1}F| \, \lambda(\dint x)\\
& \hskip -1cm +\frac{1}{2} \BE \int (D_xF)^2 |F| |D_xL^{-1}F| \, \lambda(\dint x)+\sup_{t\in\R} \BE\int (D_x\I\{F>t\}) (D_xF) |D_xL^{-1}F| \, \lambda(\dint x). \notag
\end{align}
One should note that the estimate \eqref{eq:EichelsbacherThaele} improves a previous result in \cite{Schulte2012}. The above bounds \eqref{eq:boundPSTU}--\eqref{eq:EichelsbacherThaele} are rather general.
However, both can be quite difficult to evaluate if one uses the representation \eqref{eq:L-1} of the inverse Ornstein-Uhlenbeck generator since this requires the explicit knowledge of the kernels $f_n$, $n\in\N$, of the Fock space representation, which is usually not the case for a given Poisson functional. Our main tool for overcoming this problem is Mehler's formula. In the following result it is combined with the Poincar\'e inequality, in order to control the first summand on the right-hand sides of \eqref{eq:boundPSTU} and \eqref{eq:EichelsbacherThaele}.

\begin{proposition}\label{prop:firstpart}
For $F,G\in \dom D$ with $\BE F = \BE G=0$, we have
\begin{align*}
& \BE \bigg(\BC(F,G)-\int (D_xF) (-D_xL^{-1}G) \, \lambda(\dint x)\bigg)^2\\
&  \leq  3 \int  \big[\BE (D_{x_1,x_3}^2F)^2(D_{x_2,x_3}^2F)^2\big]^{1/2} \big[\BE (D_{x_1}G)^2 (D_{x_2}G)^2\big]^{1/2}\, \lambda^3(\dint(x_1,x_2,x_3))\\
& \quad + \int  \big[\BE (D_{x_1}F)^2(D_{x_2}F)^2\big]^{1/2} \big[\BE (D_{x_1,x_3}^2G)^2 (D_{x_2,x_3}^2G)^2\big]^{1/2}\, \lambda^3(\dint(x_1,x_2,x_3))\\
& \quad + \int  \big[\BE (D_{x_1,x_3}^2F)^2(D_{x_2,x_3}^2F)^2\big]^{1/2} \big[\BE (D_{x_1,x_3}^2G)^2 (D_{x_2,x_3}^2G)^2\big]^{1/2}\, \lambda^3(\dint(x_1,x_2,x_3)).
\end{align*}
\end{proposition}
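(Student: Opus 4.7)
The plan is to recognize the left-hand side as $\BV T$, where
\[
T := \int D_xF\cdot(-D_xL^{-1}G)\,\lambda(\dint x),
\]
and then attack $\BV T$ through the Poincar\'e inequality. Since $\BE G = 0$ implies $G = \delta(-DL^{-1}G)$ (because $\delta D L^{-1}G = -LL^{-1}G = -G$), the integration by parts formula Lemma \ref{lem:integrationbyparts} gives $\BC(F,G) = \BE[FG] = \BE[F\cdot\delta(-DL^{-1}G)] = \BE T$. Thus the left-hand side of the proposition equals $\BV T$, and the Poincar\'e inequality \eqref{eq:Poincare} bounds it by $\BE\int (D_yT)^2\,\lambda(\dint y)$.

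I would next compute $D_yT$ using the discrete product rule $D_y(AB) = A\,D_yB + B\,D_yA + D_yA\cdot D_yB$ applied to the integrand, noting that $D_yD_xF = D^2_{x,y}F$ and $D_y(-D_xL^{-1}G) = -D^2_{x,y}L^{-1}G$. This yields $D_yT = T_1(y)+T_2(y)+T_3(y)$, where
\begin{align*}
T_1(y) &:= \int D^2_{x,y}F\cdot(-D_xL^{-1}G)\,\lambda(\dint x),\\
T_2(y) &:= \int D_xF\cdot(-D^2_{x,y}L^{-1}G)\,\lambda(\dint x),\\
T_3(y) &:= \int D^2_{x,y}F\cdot(-D^2_{x,y}L^{-1}G)\,\lambda(\dint x).
\end{align*}
These three pieces correspond term-by-term to the three summands on the right-hand side of the statement (with $y$ playing the role of $x_3$), and the elementary inequality $(a+b+c)^2\le 3(a^2+b^2+c^2)$ supplies an overall prefactor of $3$.

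The asymmetric coefficients $3,1,1$ then emerge from plugging in Mehler's formulas \eqref{eq:DL-1}--\eqref{eq:D2L-1}, namely $-D_xL^{-1}G = \int_0^1 P_sD_xG\,\dint s$ (no $s$-weight) and $-D^2_{x,y}L^{-1}G = \int_0^1 s\,P_sD^2_{x,y}G\,\dint s$ (with $s$-weight). Writing $T_1(y) = \int_0^1 V_1(s,y)\,\dint s$ and $T_i(y) = \int_0^1 s\,V_i(s,y)\,\dint s$ for $i=2,3$, where $V_i(s,y)$ denotes the unweighted $x$-integral, Jensen's inequality gives $T_1(y)^2\le \int_0^1 V_1(s,y)^2\,\dint s$, while Cauchy--Schwarz in $\dint s$ yields
\[
T_i(y)^2\le \Bigl(\int_0^1 s^2\,\dint s\Bigr)\int_0^1 V_i(s,y)^2\,\dint s = \tfrac13\int_0^1 V_i(s,y)^2\,\dint s, \quad i=2,3.
\]
The factor $\tfrac13$ cancels two thirds of the prefactor $3$, so the contributions of $T_2$ and $T_3$ enter with coefficient $1$, while $T_1$ retains coefficient $3$.

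Each $\BE V_i(s,y)^2$ is a double $\lambda$-integral in $(x_1,x_2)$; applying Cauchy--Schwarz on the expectation separates the $F$-factors from the $G$-factors, producing a bound of the form $\sqrt{\BE[(\text{F-factor})^2]}\cdot\sqrt{\BE[(P_s\text{-}G\text{-factor})^2]}$. The Mehler semigroup is then stripped off via the contractivity-type bound $\BE[(P_sU)^2(P_sV)^2]\le \BE[U^2V^2]$, which follows from the pointwise estimate $(P_sU)^2\le P_s(U^2)$ together with the thinning coupling $P_sW(\eta) = \BE[W(\eta^*)\mid\eta]$ from \eqref{ePt}, applied with both occurrences of $P_s$ coupled through a shared $s$-thinning of $\eta$. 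Integrating over $\lambda(\dint y)$ and renaming $y = x_3$ then produces the three triple integrals in the statement. The most delicate step is precisely this last contractivity estimate, since a naive iteration of Jensen followed by Cauchy--Schwarz only yields the weaker $L^4$-product bound $\sqrt{\BE U^4\,\BE V^4}$, rather than the $L^2$-product structure $\BE[U^2V^2]$ required for the proposition.
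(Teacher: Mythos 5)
Your overall architecture matches the paper's exactly: identify the left-hand side as $\BV T$ via $\delta(-DL^{-1}G)=G$, apply the Poincar\'e inequality, expand $D_yT$ by the discrete product rule into the three terms $T_1,T_2,T_3$, absorb the prefactor $3$, and treat $T_2,T_3$ with Cauchy--Schwarz in $\dint s$ to extract the factor $\int_0^1 s^2\,\dint s = 1/3$. That is all correct and is precisely how the paper proceeds. The gap is entirely in your last paragraph, where you ``strip off'' the Mehler semigroup. You write $T_1(y)=\int_0^1 V_1(s,y)\,\dint s$ with $V_1(s,y)=\int D^2_{x,y}F\cdot P_sD_xG\,\lambda(\dint x)$, expand $V_1^2$ as a double integral, separate $F$- and $G$-factors by Cauchy--Schwarz, and are then left needing the ``contractivity'' bound
\[
\BE\big[(P_sU)^2(P_sV)^2\big]\le \BE\big[U^2V^2\big].
\]
This bound is \emph{false}, and the argument you sketch for it does not close. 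The shared-thinning coupling gives $(P_sU)^2(P_sV)^2 = \big(\BE[U(\eta')\mid\eta]\big)^2\big(\BE[V(\eta')\mid\eta]\big)^2$, and applying Jensen to each factor separately yields $\le \BE[U(\eta')^2\mid\eta]\,\BE[V(\eta')^2\mid\eta]$ --- a \emph{product} of two conditional expectations, not a single conditional expectation of the product. Upon taking $\BE$ this does not collapse to $\BE[U^2V^2]$, and in general it exceeds it: take $U=1+\varepsilon I_2(u)$, $V=1-\varepsilon I_2(u)$ with $\|u\|_2=1$; then $\BE[U^2V^2]=1-4\varepsilon^2+O(\varepsilon^4)$ while $\BE[(P_sU)^2(P_sV)^2]=1-4\varepsilon^2 s^4+O(\varepsilon^4)>\BE[U^2V^2]$ for $s\in(0,1)$ and small $\varepsilon$.

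The paper avoids this entirely by postponing the square. Instead of expressing $-D_xL^{-1}G$ as a ``closed-form'' $\int_0^1 P_sD_xG\,\dint s$ and then squaring, the paper first pushes the absolute value through the Mehler representation,
\[
\int |(D^2_{x,y}F)(-D_xL^{-1}G)|\,\lambda(\dint x)
\le \int_0^1\!\int \BE\Big[\int |D^2_{x,y}f(\eta)\,D_xg(\eta^{(s)}+\mu)|\,\lambda(\dint x)\,\Big|\,\eta\Big]\,\Pi_{(1-s)\lambda}(\dint\mu)\,\dint s,
\]
and \emph{then} squares the whole expression by Jensen with respect to the probability measure $\dint s\otimes\Pi_{(1-s)\lambda}\otimes(\text{conditional thinning law})$. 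After this, expanding the inner $x$-integral square into a double $(x_1,x_2)$-integral produces $D_{x_1}g(\eta^{(s)}+\mu)\,D_{x_2}g(\eta^{(s)}+\mu)$ evaluated at the \emph{same} realization $\eta^{(s)}+\mu$, inside a single conditional expectation. The $f$-factors are $\eta$-measurable and come out; one more Jensen/Cauchy--Schwarz on the same probability measure, followed by the unbiasedness relation \eqref{convww}, gives exactly $\BE\big[(D_{x_1}G)^2(D_{x_2}G)^2\big]$ without any further inequality on $P_s$. That reordering --- square after inserting the representation, so that both $g$-factors share one $\eta^{(s)}+\mu$ --- is the missing ingredient. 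As written, your proof does not go through.
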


\begin{proof}
We can of course assume that the three integrals on the right-hand side of the inequality are finite -- otherwise there is nothing to prove. Let $f,g: \bN\to\R$ be representatives of $F$ and $G$. Combining \eqref{covcha} with \eqref{2.3} and \eqref{eq:L-1}, we have
$$\BE \int (D_xF) (-D_xL^{-1}G) \, \lambda (\dint x) =\BC(F,G).$$
Start by assuming that
\begin{equation}\label{eq:assumptionFinite}
\int \big|D_y\big((D_xF) (-D_xL^{-1}G)\big) \big| \, \lambda (\dint x)<\infty, \quad \lambda\text{-a.e. } y\in\BX, \quad \BP\text{-a.s.}
\end{equation}
Then, the integral
$$
\int (D_xf(\eta+\delta_y)) (-D_x\tilde{g}(\eta+\delta_y)) \, \lambda (\dint x),
$$
where $\tilde{g}$ is a representative of $L^{-1}G$, exists and is finite $\BP$-a.s.\ for $\lambda$-a.e.\ $y\in\BX$. Consequently, $\BP$-a.s.\ for $\lambda$-a.e.\ $y\in\BX$,
$$
D_y \int (D_xF) (-D_xL^{-1}G) = \int D_y (D_xF) (-D_xL^{-1}G)  \, \lambda (\dint x),
$$
and
\begin{align*}
\bigg|D_y  \int (D_xF) (-D_xL^{-1}G) \, \lambda (\dint x) \bigg|
\leq \int \big|D_y\big((D_xF) (-D_xL^{-1}G)\big) \big| \, \lambda (\dint x).
\end{align*}
Together with the Poincar\'e inequality (see Proposition \ref{prop:Poincare}) this yields
\begin{align*}
& A:= \BE\left(\BC(F,G)-\int (D_xF) (-D_xL^{-1}G) \, \lambda(\dint x)\right)^2\\
&\leq \BE\int \bigg(D_y  \int (D_xF) (-D_xL^{-1}G) \, \lambda (\dint x) \bigg)^2 \, \lambda(\dint y)\\
&\leq \BE\int \bigg(  \int \big|D_y\big((D_xF) (-D_xL^{-1}G)\big)\big| \, \lambda (\dint x) \bigg)^2 \, \lambda(\dint y):=B.
\end{align*}
Of course, if assumption \eqref{eq:assumptionFinite} is not satisfied, then the estimate $A\leq B$ (as defined above) continues (trivially) to hold. Since, for any $x,y\in\BX$,
\begin{align*}
& D_y((D_xF)(-D_xL^{-1}G))\\
& =(D^2_{x,y}F)(-D_xL^{-1}G)+(D_xF)(-D^2_{x,y}L^{-1}G)+(D^2_{x,y}F)(-D^2_{x,y}L^{-1}G),
\end{align*}
we obtain that
\begin{align}\label{e37}
\BE \bigg(\BC(F,G)-\int (D_xF) (-D_xL^{-1}G) \, \lambda(\dint x)\bigg)^2\le 3(I_1+I_2+I_3)
\end{align}
with
\begin{align*}
I_1&:=\BE\int \bigg(\int | (D^2_{x,y}F) (-D_{x}L^{-1}G)| \, \lambda(\dint x)\bigg)^2 \, \lambda(\dint y),\\
I_2&:=\BE\int \bigg(\int |(D_{x}F) (-D^2_{x,y}L^{-1}G)| \, \lambda(\dint x)\bigg)^2 \, \lambda(\dint y),\\
I_3&:=\BE\int \bigg(\int |(D^2_{x,y}F) (-D^2_{x,y}L^{-1}G)| \, \lambda(\dint x)\bigg)^2 \, \lambda(\dint y).
\end{align*}
%It is important to notice that, since $F,G\in \DDD$, the integrals
%\[
%\int (D^2_{x,y}F) (-D_{x}L^{-1}G) \, \lambda(\dint x), \, \int (D_{x}F) (-D^2_{x,y}L^{-1}G) \, \lambda(\dint x), \, \int (D^2_{x,y}F) (-D^2_{x,y}L^{-1}G) \, \lambda(\dint x)
%\]
%are all well-defined and finite for $\lambda$-a.e. $y\in \mathbb{X}$ (similar remarks will apply without notice to other integrals considered below).

\noindent We will now use Mehler's formula to derive upper bounds for $I_1$, $I_2$ and $I_3$. By combining \eqref{eq:DL-1} with the definition of $P_s$ and Fubini's Theorem, we see that
\begin{align*}
& \int | (D^2_{x,y}F) (-D_{x}L^{-1}G)| \, \lambda(\dint x)\\
 & = \int \left| D^2_{x,y}F \int_0^1 P_s D_xG \, \dint s \right| \, \lambda(\dint x) \allowdisplaybreaks\\
& = \int \left| D^2_{x,y}f(\eta) \int_0^1 \int \BE\left[D_x g(\eta^{(s)}+\mu)\bigm|  \eta\right] \,
\Pi_{(1-s) \, \lambda}(\dint\mu) \, \dint s \right| \, \lambda(\dint x)\\
& \leq  \int_0^1 \int \BE\bigg[\int |D^2_{x,y}f(\eta) D_x g(\eta^{(s)}+\mu)| \, \lambda(\dint x)  \biggm| \eta \bigg] \, \, \Pi_{(1-s)\lambda}(\dint\mu) \, \dint s.
\end{align*}
Now an application of Jensen's inequality with respect to the outer integrals and the conditional expectation as well as the Cauchy-Schwarz inequality lead to
\begin{align*}
& \bigg(\int |(D^2_{x,y}F) (-D_{x}L^{-1}G)| \, \lambda(\dint x)\bigg)^2\\
&  \leq \int_0^1 \int \BE\bigg[\int | D^2_{x_1,y}f(\eta) D^2_{x_2,y}f(\eta) D_{x_1}g(\eta^{(s)}+\mu) D_{x_2}g(\eta^{(s)}+\mu)| \, \lambda^2(\dint(x_1,x_2))  \biggm| \eta \bigg] \\
& \hskip 12cm
\Pi_{(1-s)\lambda}(\dint\mu) \, \dint s \\
&  = \int |D^2_{x_1,y}f(\eta) D^2_{x_2,y}f(\eta)|\\
& \quad\quad \int_0^1 \int \BE[|D_{x_1}g(\eta^{(s)}+\mu) D_{x_2}g(\eta^{(s)}+\mu)|  \bigm| \eta] \, \Pi_{(1-s)\lambda}(\dint\mu) \, \dint s \lambda^2(\dint(x_1,x_2))\\
&  \leq \int \left| D^2_{x_1,y}f(\eta) D^2_{x_2,y}f(\eta)\right| \\
& \hskip 1cm \bigg[\int_0^1\int \BE[(D_{x_1}g(\eta^{(s)}+\mu))^2 (D_{x_2}g(\eta^{(s)}+\mu))^2 \bigm| \eta] \, \, \Pi_{(1-s)\lambda}(\dint\mu) \, \dint s\bigg]^{1/2} \, \lambda^2(\dint(x_1,x_2)).
\end{align*}
Using the Cauchy-Schwarz inequality again, we obtain that
\begin{align*}
I_1 \leq \int & \bigg[\BE \int_0^1\int \BE[(D_{x_1}g(\eta^{(s)}+\mu))^2 (D_{x_2}g(\eta^{(s)}+\mu))^2 \bigm| \eta] \, \Pi_{(1-s)\lambda}(\dint\mu) \, \dint s\bigg]^{1/2}\\
& \big[\BE (D^2_{x_1,y}f(\eta))^2 (D^2_{x_2,y}f(\eta))^2\big]^{1/2} \, \lambda^3(\dint(x_1,x_2,y)).
\end{align*}
By \eqref{convww}, the first part of the integrand simplifies to $\big[\BE (D_{x_1}G)^2 (D_{x_2}G)^2\big]^{1/2}$ so that
$$
I_1 \leq \int  \big[\BE (D^2_{x_1,y}F)^2 (D^2_{x_2,y}F)^2\big]^{1/2} \,
\big[\BE (D_{x_1}G)^2 (D_{x_2}G)^2\big]^{1/2} \, \lambda^3(\dint(x_1,x_2,y)).
$$
For $I_2$ and $I_3$ we obtain in a similar way by using \eqref{eq:D2L-1} that
\begin{align*}
& \bigg(\int | (D_xF) (-D_{x,y}^2L^{-1} G)| \, \lambda(\dint x) \bigg)^2\\
& \leq\bigg(\int |D_xf(\eta)| \int_0^1 s \int \BE[|D_{x,y}^2g(\eta^{(s)}+\mu)| \bigm| \eta] \, \Pi_{(1-s)\lambda}(\dint\mu) \, \dint s \, \lambda(\dint x) \bigg)^2 \allowdisplaybreaks\\
& =\bigg( \int_0^1 \int \BE\bigg[ s \int | D_xf(\eta) D_{x,y}^2g(\eta^{(s)}+\mu)| \, \lambda(\dint x) \biggm| \eta \bigg] \, \Pi_{(1-s)\lambda}(\dint\mu) \, \dint s  \bigg)^2 \allowdisplaybreaks\\
& \leq \int_0^1 u^2 \, \dint u \times \int_0^1 \int \BE\bigg[ \int |D_{x_1}f(\eta) D_{x_2}f(\eta)D_{x_1,y}^2g(\eta^{(s)}+\mu)\\
& \hskip 4.5cm D_{x_2,y}^2g(\eta^{(s)}+\mu)|\, \lambda^2(\dint(x_1,x_2)) \biggm| \eta\bigg] \, \Pi_{(1-s)\lambda}(\dint\mu) \, \dint s \allowdisplaybreaks \\
& \leq \frac{1}{3}\int \bigg[\int_0^1 \int \BE[ (D_{x_1,y}^2g(\eta^{(s)}+\mu))^2 (D_{x_2,y}^2g(\eta^{(s)}+\mu))^2 \bigm| \eta] \, \Pi_{(1-s)\lambda}(\dint\mu) \, \dint s\bigg]^{1/2} \\
& \hskip 1.3cm \left| D_{x_1}f(\eta) D_{x_2}f(\eta)\right| \,  \lambda^2(\dint(x_1,x_2))
\end{align*}
and
\begin{align*}
& \bigg(\int | (D_{x,y}^2F) (-D_{x,y}^2L^{-1}G) | \, \lambda(\dint x) \bigg)^2\\
& \leq \bigg(\int | D_{x,y}^2f(\eta)| \int_0^1 \int \BE[s |D_{x,y}^2g(\eta^{(s)}+\mu)| \bigm| \eta] \, \Pi_{(1-s)\lambda}(\dint\mu) \, \dint s \, \lambda(\dint x) \bigg)^2 \allowdisplaybreaks\\
& = \bigg(\int_0^1 \int \BE\bigg[s \int | D_{x,y}^2f(\eta)  D_{x,y}^2g(\eta^{(s)}+\mu)|  \, \lambda(\dint x) \biggm| \eta\bigg] \, \Pi_{(1-s)\lambda}(\dint\mu) \, \dint s \bigg)^2\\
& \leq  \int_0^1 u^2 \, \dint u\times \int_0^1 \int \BE\bigg[ \int | D_{x_1,y}^2f(\eta) D_{x_2,y}^2f(\eta)  D_{x_1,y}^2g(\eta^{(s)}+\mu)\\
& \hskip 4.4cm D_{x_2,y}^2g(\eta^{(s)}+\mu)|  \, \lambda^2(\dint(x_1,x_2)) \biggm| \eta \bigg] \, \Pi_{(1-s)\lambda}(\dint\mu) \, \dint s \allowdisplaybreaks\\
& = \frac{1}{3}  \int \left| D_{x_1,y}^2f(\eta) D_{x_2,y}^2f(\eta)\right|\\
& \hskip 1cm  \int_0^1 \int \BE[|D_{x_1,y}^2g(\eta^{(s)}+\mu) D_{x_2,y}^2g(\eta^{(s)}+\mu)| \bigm| \eta] \, \Pi_{(1-s)\lambda}(\dint\mu) \, \dint s    \, \lambda^2(\dint (x_1,x_2))\allowdisplaybreaks\\
& \leq \frac{1}{3}  \int\left| D_{x_1,y}^2f(\eta) D_{x_2,y}^2f(\eta)\right| \\
& \hskip 1cm \bigg[ \int_0^1 \int \BE[(D_{x_1,y}^2g(\eta^{(s)}+\mu))^2 (D_{x_2,y}^2g(\eta^{(s)}+\mu))^2 \bigm| \eta] \, \Pi_{(1-s)\lambda}(\dint\mu) \, \dint s \bigg]^{1/2}   \, \lambda^2(\dint (x_1,x_2)).
\end{align*}
As before a combination of the Cauchy-Schwarz inequality and \eqref{convww} leads to
$$I_2 \leq \frac{1}{3}\int \big[\BE (D_{x_1}F)^2 (D_{x_2}F)^2\big]^{1/2} \big[\BE (D_{x_1,y}^2G)^2 (D_{x_2,y}^2G)^2\big]^{1/2} \, \lambda^3(\dint(x_1,x_2,y))$$
and
$$I_3 \leq \frac{1}{3}\int \big[\BE (D_{x_1,y}^2F)^2 (D_{x_2,y}^2F)^2\big]^{1/2} \big[\BE (D_{x_1,y}^2G)^2 (D_{x_2,y}^2G)^2\big]^{1/2} \, \lambda^3(\dint(x_1,x_2,y)).$$
Combining the inequalities for $I_1$, $I_2$ and $I_3$ with \eqref{e37} yields the assertion.
\end{proof}

\subsection{Proofs}

We can now proceed to the proof of our main results.

\begin{proof}[Proof of Theorem \ref{thm:maindW}]
For the first summand on the right-hand side of \eqref{eq:boundPSTU} we obtain, by the Cauchy-Schwarz inequality and Proposition \ref{prop:firstpart} in the case $G=F$, that
$$
\BE\big|1-\int (D_xF) (-D_xL^{-1}F) \, \lambda(\dint x) \big| \leq \sqrt{\BE \bigg(1 - \int (D_xF) (-D_xL^{-1}F) \, \lambda(\dint x) \bigg)^2}\leq \gamma_1+\gamma_2.
$$
For the second part of the bound in \eqref{eq:boundPSTU}, H\"older's inequality and Lemma \ref{lem:boundDLinversep} yield
\begin{align*}
\BE\int (D_xF)^2 |D_xL^{-1}F| \, \lambda(\dint x) & \leq \int \big[\BE |D_xF|^3\big]^{2/3} \big[\BE|D_xL^{-1}F|^3\big]^{1/3} \, \lambda(\dint x)\\
 & \leq \int \BE |D_xF|^3 \, \lambda(\dint x),
\end{align*}
which concludes the proof.
\end{proof}

\begin{proof}[Proof of Theorem \ref{thm:mainKolmogorov}]
%We assume that $\gamma_6<\infty$ since the assertion is obviously true otherwise. Since $\BE|D_xF|<\infty$ for $\lambda$-a.e.\ $x\in\BX$, we can apply Proposition \ref{prop:Poincare} and obtain that
%$$
%\BE \int (D_xF)^2 \, \lambda(\dint x) \leq \int (\BE D_xF)^2 \, \lambda(\dint x) + \int \BE (D^2_{x,y}F)^2 \, \lambda^2(\dint (x,y)).
%$$
%The right-hand side is finite, since the first summand is bounded by $\BV F$ as a consequence of \eqref{covcha} and the second summand is at most $\gamma_6$. This means that $F\in\dom D$ and allows us to apply \eqref{eq:EichelsbacherThaele}.

Observe that the first and the second summand in \eqref{eq:EichelsbacherThaele} can be treated exactly as in the proof of Theorem \ref{thm:maindW}. H\"older's inequality and Lemma \ref{lem:boundDLinversep} yield that
\begin{align*}
\BE \int (D_xF)^2 |F| |D_xL^{-1}F| \, \lambda(\dint x) & \leq \int \big[\BE (D_xF)^4\big]^{1/2} \big[\BE F^4\big]^{1/4} \big[\BE|D_xL^{-1}F|^4\big]^{1/4} \, \lambda(\dint x)\\
& \leq \big[\BE F^4\big]^{1/4} \int \big[\BE (D_xF)^4\big]^{3/4} \, \lambda(\dint x) = 2\gamma_4.
\end{align*}
To conclude the proof, assume that $\gamma_5+\gamma_6 <\infty$ (otherwise, there is nothing to prove). We shall first show that the random function $g(x) := D_xF|D_xL^{-1}F|$ verifies the integrability condition $$A:= \BE\int g(x)^2\,\lambda(\dint x)
+\BE \iint (D_yg(x))^2\,\lambda(\dint x)\,\lambda(\dint y)<\infty.$$
By the trivial inequality $\left||a|-|b|\right|\leq |a-b|$, which is true for all $a,b\in\R$, we have that
$$
|D_y|D_xL^{-1}F||\leq |D^2_{x,y}L^{-1}F|.
$$
Thus, we infer that
$$
|D_y((D_xF) |D_xL^{-1}F|)|\leq |D^2_{x,y}F| \, |D_x L^{-1}F|+ |D_{x}F| \, |D^2_{x,y} L^{-1}F|
+ |D^2_{x,y}F| \, |D^2_{x,y} L^{-1}F|,
$$
whence
\begin{align*}
A & \leq \BE \int (D_xF)^2 (D_xL^{-1}F)^2 \, \lambda(\dint x)+ 3 \BE \iint (D^2_{x,y}F)^2 (D_x L^{-1}F)^2+ (D_{x}F)^2 (D^2_{x,y} L^{-1}F)^2\\
& \hskip 7.5cm + (D^2_{x,y}F)^2 (D^2_{x,y}L^{-1}F)^2 \, \lambda(\dint x) \, \lambda(\dint y).
\end{align*}
Now the Cauchy-Schwarz inequality and Lemma \ref{lem:boundDLinversep} yield that
\begin{align*}
A & \leq \int \BE (D_xF)^4 \, \lambda(\dint x)
   + 3 \iint \big[\BE (D^2_{x,y}F)^4\big]^{1/2} \big[\BE (D_x F)^4\big]^{1/2}\\ & \hskip 5.2cm + \big[\BE (D_{x}F)^4\big]^{1/2} \big[\BE (D^2_{x,y}F)^4\big]^{1/2} + \BE (D^2_{x,y}F)^4
\, \lambda(\dint x) \, \lambda(\dint y)\\
&\leq  \gamma_5^2+\gamma_6^2<\infty.
\end{align*}
By virtue of Proposition \ref{prop:isometry}, this yields that $g$ satisfies \eqref{eq:assumptiong}. The integration by parts formula in Lemma \ref{lem:integrationbypartsRefined} together with the Jensen inequality (and the fact that indicators are bounded by 1) now imply that
\begin{align*}
\BE \int D_x\I\{F>t\} (D_xF) |D_xL^{-1}F| \, \lambda(\dint x) & = \BE \I\{F>t\} \, \delta((DF) |DL^{-1}F|)\\
& \leq \big[\BE \delta((DF) |DL^{-1}F|)^2\big]^{1/2}.
\end{align*}
Finally, Corollary \ref{cor:SkorohodIsometry} and the upper bound for $A$ above imply that
$$
\BE \delta((DF) |DL^{-1}F|)^2 \leq A \leq \gamma_5^2+\gamma_6^2.
$$
%\begin{align*}
%\BE \delta((DF) |DL^{-1}F|)^2 & \leq \BE \int (D_xF)^2 (D_xL^{-1}F)^2 \, \lambda(\dint x)\\
%& \quad + \BE \iint (D_y((D_xF) |D_xL^{-1}F|))^2 \, \lambda(\dint x) \, \lambda(\dint y)\leq \gamma_5^2+\gamma_6^2,
%\end{align*}
%where we have used the estimates on $g$ established before.
Combining all these bounds with \eqref{eq:EichelsbacherThaele} concludes the proof.
\end{proof}

\begin{example}\label{ex:fc}{\rm Consider the simple case where $F= I_1(f)$ is an element of the first Wiener chaos of $\eta$, where the deterministic kernel $f\in L^2(\lambda)$ is such that $\|f\|_1=1$. Then, one has that $DF = f$, $D^2F =0$,
\[
\BE F^2=\|f\|_1^2=1 \quad\mbox{and}\quad \BE F^4 = 3 + \|f\|^4_{L^4(\lambda)},
\]
where we have implicitly applied a multiplication formula between Wiener-It\^o integrals such as the one stated in \cite[Theorem 6.5.1]{PeTa} and use the notation $\|f\|_{L^p(\lambda)}:=(\int |f|^p \, \dint\lambda)^{1/p}$, $p>0$. In this framework, Theorem \ref{thm:maindW} and Theorem \ref{thm:mainKolmogorov} imply, respectively, that
\[
d_W(F,N) \leq \|f\|^3_{L^3(\lambda)} \quad \mbox{and}\quad d_{K}(F,N) \leq \|f\|^3_{L^3(\lambda)}\times \bigg(1 +\frac{3^{1/4}}{2}+ \frac{\|f\|_{L^4(\lambda)}}{2}\bigg)+\|f\|^2_{L^4(\lambda)}.
\]
Analogous bounds for the Wasserstein distance can also be inferred from \cite[Corollary 3.4]{PSTU10}, whereas the bounds for the Kolmogorov distance can alternatively be deduced from the main results of \cite{Schulte2012}.

The most straightforward application of these bounds for the normal approximation of first order Wiener-It\^o integrals corresponds to the case where $\BX = \R_+$, $\lambda$ equals the Lebesgue measure, and $F = F_t = I_1(f_t)$, with $t>0$ and $f_t(x) = t^{-1/2} {\bf 1} \{x\leq t\}$. In this case, one has that $F_t $ is a rescaled centred Poisson random variable with parameter $t$, and the previous estimates become
\[
d_W(F_t,N) \leq \frac{1}{\sqrt{t}} \quad \mbox{and}\quad d_{K}(F_t,N) \leq \frac{1}{\sqrt{t}} \times \left(2+\frac{3^{1/4}}{2}+ \frac{1}{2t^{1/4}} \right),
\]
yielding rates of convergence (as $t\to \infty$) that are consistent with the usual Berry-Esseen estimates.
}
\end{example}

We conclude the section by recording a useful inequality, which we shall apply throughout the paper in order to bound the fourth moment of a given random variable $F$ in terms of the difference operator $DF$. In particular, such an estimate is crucial for dealing with the quantity $\gamma_4$ appearing in the statement of Theorem \ref{thm:mainKolmogorov}.

\begin{lemma}\label{lem:BoundFourthMoment}
Let $F\in L^2_\eta$ be such that $\BE F=0$ and $\BV F=1$. Then,
$$\BE F^4 \leq \max\bigg\{256 \bigg[ \int \big[\BE(D_zF)^4\big]^{1/2} \, \lambda(\dint z)\bigg]^2, 4\int \BE(D_zF)^4 \, \lambda(\dint z) +2 \bigg\}.$$
Moreover, if the right-hand side is finite, $F\in\dom D$.
\end{lemma}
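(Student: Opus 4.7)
The plan is to apply the Poincar\'e inequality from Proposition~\ref{prop:Poincare} to $F^2$ (rather than to $F$ itself), and then to exploit the discrete product rule $D_z F^2 = 2F\,D_zF + (D_zF)^2$ to re-express the resulting gradient term using $D_zF$. Since we do not know a priori that $\BE F^4 < \infty$, I first truncate as in the proof of Proposition~\ref{prop:Poincare}: set $F_s := s\,\I\{F>s\} + F\,\I\{|F|\le s\} - s\,\I\{F<-s\}$, so that $F_s$ is bounded, $|F_s|\le |F|$, $|D_zF_s|\le |D_zF|$ pointwise (because $x\mapsto (x\wedge s)\vee(-s)$ is $1$-Lipschitz), and $F_s\to F$ almost surely as $s\to\infty$.

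Since $F_s^2$ is bounded, it lies in $L^2_\eta$, and Proposition~\ref{prop:Poincare} applied to $F_s^2$ yields
$$M_s := \BE F_s^4 \le (\BE F_s^2)^2 + \BE\int (D_zF_s^2)^2\,\lambda(\dint z) \le 1 + \BE\int (D_zF_s^2)^2\,\lambda(\dint z),$$
using $\BE F_s^2 \le \BE F^2 = 1$. Expanding $D_zF_s^2 = 2F_s\,D_zF_s + (D_zF_s)^2$ and applying $(a+b)^2\le 2a^2+2b^2$ together with the pointwise bounds just recorded, I obtain $(D_zF_s^2)^2 \le 8 F_s^2 (D_zF)^2 + 2(D_zF)^4$. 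The Cauchy--Schwarz inequality then gives $\BE[F_s^2 (D_zF)^2] \le M_s^{1/2}[\BE(D_zF)^4]^{1/2}$, so writing $A := \int [\BE(D_zF)^4]^{1/2}\,\lambda(\dint z)$ and $B := \int \BE(D_zF)^4\,\lambda(\dint z)$, I arrive at the scalar quadratic inequality
$$M_s \le 1 + 8 A\sqrt{M_s} + 2B.$$

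The conclusion then follows by a two-case analysis of this inequality. If $8A\sqrt{M_s} \ge 1+2B$, then $M_s \le 16 A\sqrt{M_s}$, so $M_s \le 256 A^2$; otherwise $M_s \le 2(1+2B) = 4B+2$. Either way $M_s \le \max\{256 A^2, 4B+2\}$, and Fatou's lemma applied to $F_s^4 \to F^4$ yields the same bound for $\BE F^4$. For the moreover statement, finiteness of the right-hand side forces $A<\infty$; Jensen's inequality gives $\BE(D_zF)^2 \le [\BE(D_zF)^4]^{1/2}$, hence $\BE\int (D_zF)^2\,\lambda(\dint z) \le A<\infty$, i.e.\ $F\in\dom D$. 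The only genuine subtlety is guaranteeing that $M_s<\infty$ before exploiting the quadratic inequality, which is exactly the role of the truncation; without it one would only get the tautology $\infty\le\infty$.
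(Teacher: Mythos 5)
Your proof is correct and follows the same core strategy as the paper's: apply the Poincar\'e inequality of Proposition~\ref{prop:Poincare} to $F^2$, expand via the discrete product rule $D_z(F^2)=2F\,D_zF+(D_zF)^2$, estimate the cross term by Cauchy--Schwarz to obtain a scalar inequality $\BE F^4\le 1+8A(\BE F^4)^{1/2}+2B$ (with $A=\int[\BE(D_zF)^4]^{1/2}\,\lambda(\dint z)$, $B=\int\BE(D_zF)^4\,\lambda(\dint z)$), and then resolve it by a two-case argument. The one genuine difference is your truncation step, which is actually a refinement. The paper's displayed chain of inequalities ends at $\BE F^4\le\max\{16(\BE F^4)^{1/2}A,\,4B+2\}$ and then implicitly divides by $(\BE F^4)^{1/2}$; as you observe, this is only meaningful once one knows $\BE F^4<\infty$, which is not among the hypotheses. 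Working with the bounded truncate $F_s$ (so that $M_s=\BE F_s^4<\infty$, $\BE F_s^2\le\BE F^2=1$, $|D_zF_s|\le|D_zF|$) gives the same scalar inequality in $M_s$, yields $M_s\le\max\{256A^2,\,4B+2\}$ rigorously, and Fatou passes this bound to $\BE F^4$. Your handling of the moreover clause via $\BE(D_zF)^2\le[\BE(D_zF)^4]^{1/2}$ matches the paper's appeal to \eqref{2.31} and Cauchy--Schwarz. In short, the proof is correct and, if anything, a bit more careful than the one in the paper.
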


\begin{proof}
By virtue of the Poincar\'e inequality (see Proposition \ref{prop:Poincare}) (as applied to the random variable $F^2$) and of a straightforward computation, we obtain
\begin{align*}
\BE F^4 & = \BV F^2 + (\BE F^2)^2 = \BV F^2 +1\\
& \leq \int \BE \big(D_z(F^2)\big)^2 \, \lambda(\dint z) +1 = \int \BE \big(2F(D_zF) +(D_zF)^2\big)^2 \, \lambda(\dint z) +1 \allowdisplaybreaks\\
& \leq \int 8 \BE F^2 (D_zF)^2 + 2\BE(D_zF)^4 \, \lambda(\dint z) +1\\
& \leq 8 \big[\BE F^4\big]^{1/2} \int \big[\BE(D_zF)^4\big]^{1/2} \, \lambda(\dint z) + 2\int \BE(D_zF)^4 \, \lambda(\dint z) +1\\
& \leq \max\bigg\{16 \big[\BE F^4\big]^{1/2} \int \big[\BE(D_zF)^4\big]^{1/2} \, \lambda(\dint z),4\int \BE(D_zF)^4 \, \lambda(\dint z) +2\bigg\},
\end{align*}
which implies the inequality. The conclusion $F\in\dom D$ follows from \eqref{2.31} and the Cauchy-Schwarz inequality.
\end{proof}

\section{Lower bounds for variances}\label{sec:Variancepositive}

\subsection{General bounds}

In what follows, we will apply our main bounds to sequences of standardized random variables of the form $(F- \BE F)/\sqrt{{\rm Var}\,(F)}$, where $F$ is e.g.\ some relevant geometric quantity. Our aim in this section is to prove several new analytical criteria, allowing one to deduce explicit lower bounds for variances. Our approach, which we believe is of independent interest, is based on the use of difference operators, and is perfectly tailored to deal with geometric applications.

We start by proving two criteria, ensuring that the variance of a given random variable is non-zero or greater than a constant, respectively.

\begin{lemma}
Let $F\in L^2_\eta$ and let $f:\bN\to\R$ be a representative of $F$. Then $\BV F=0$ if and only if
\begin{equation}\label{eq:conditionnondegenerate}
\BE[f(\eta+\sum_{i\in I_1}\delta_{x_i})-f(\eta+\sum_{i\in I_2}\delta_{x_i})]=0,
\quad \lambda^k\text{-a.e. }(x_1,\hdots,x_k)\in\BX^k,
\end{equation}
for all $k\in\N$ and $I_1,I_2\subset\{1,\hdots,k\}$ such that $I_1\cup I_2=\{1,\hdots,k\}$.
\end{lemma}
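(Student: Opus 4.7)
The plan is to use the Fock space representation \eqref{covcha} applied with $G=F$, which identifies
$$
\BV F = \sum_{n=1}^{\infty} n!\, \|f_n\|_n^2, \qquad f_n = (n!)^{-1}\,\BE D^n F,
$$
so that $\BV F=0$ is equivalent to the intermediate condition $(\star)$: $\BE D^n_{x_1,\ldots,x_n}F = 0$ for $\lambda^n$-a.e.\ $(x_1,\ldots,x_n)\in\BX^n$ and every $n\geq 1$. I will then show that $(\star)$ is equivalent to \eqref{eq:conditionnondegenerate}, which closes the loop.

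For the easier direction ($(\star) \Leftarrow \eqref{eq:conditionnondegenerate}$), I specialise the assumption to $I_2=\{1,\ldots,k\}$, which makes the covering requirement $I_1\cup I_2=\{1,\ldots,k\}$ automatic for every $I_1\subseteq\{1,\ldots,k\}$. Writing $G(I):=\BE f(\eta+\sum_{i\in I}\delta_{x_i})$, the hypothesis then gives $G(I)=G(\{1,\ldots,k\})$ for every $I\subseteq\{1,\ldots,k\}$ and $\lambda^k$-a.e.\ $(x_1,\ldots,x_k)$. Substituting this constancy into the defining inclusion-exclusion \eqref{Dsymmetric} collapses $\BE D^k_{x_1,\ldots,x_k}F$ to a common factor times $\sum_{J\subseteq\{1,\ldots,k\}}(-1)^{k-|J|}=(1-1)^k=0$.

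For the reverse direction ($(\star) \Rightarrow \eqref{eq:conditionnondegenerate}$), I will invoke M\"obius inversion on the Boolean lattice. Fix $k$, and for each subset $K\subseteq\{1,\ldots,k\}$ use $(\star)$ at level $n=|K|$ together with Fubini to conclude that the function $(x_1,\ldots,x_k)\mapsto\BE D^{|K|}_{x_K}F$ vanishes off a $\lambda^k$-null set in $\BX^k$; intersecting these finitely many null sets yields a single exceptional set on which all $2^k$ identities fail simultaneously. On the complement, M\"obius inversion applied to the formula \eqref{Dsymmetric} gives
$$
G(I) = \sum_{K\subseteq I} \BE D^{|K|}_{x_K}F = G(\emptyset)= \BE F, \qquad I\subseteq\{1,\ldots,k\},
$$
with the convention $D^0 F = F$. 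In particular $G(I_1)-G(I_2)=0$ for any $I_1,I_2$ with $I_1\cup I_2=\{1,\ldots,k\}$, giving \eqref{eq:conditionnondegenerate}.

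The principal technical point lies in the bookkeeping of null sets in the $\Rightarrow$ direction: the a.e.\ assertions coming from $(\star)$ are, a priori, formulated on $\BX^{|K|}$ with a different exceptional set for each $K$, and they must be lifted to $\BX^k$ via Fubini before being intersected. This is routine but deserves an explicit sentence, since without it the M\"obius inversion would be applied only pointwise without a uniform exceptional set in $k$ variables. Everything else is essentially combinatorial: the forward direction collapses onto the binomial identity $(1-1)^k=0$, and the reverse onto M\"obius inversion on the poset of subsets of a finite set.
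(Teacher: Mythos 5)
Your proof is correct and follows essentially the same route as the paper: both reduce $\BV F=0$ to the vanishing of $\BE D^nF$ via the Fock space identity \eqref{covcha}, then establish (after the usual Fubini lifting and intersection of the finitely many exceptional null sets) that $\BE f(\eta+\sum_{i\in I}\delta_{x_i})=\BE F$ for all $I\subseteq\{1,\hdots,k\}$ simultaneously $\lambda^k$-a.e. The only cosmetic difference is that you invoke M\"obius inversion on the Boolean lattice explicitly, whereas the paper derives the same identity $\BE[f(\eta+\sum_{i=1}^n\delta_{x_i})-f(\eta)]=0$ by an induction on $n$ that is, in effect, a proof of that inversion; and in the converse direction you make precise the binomial cancellation $(1-1)^k=0$ (via $I_2=\{1,\hdots,k\}$) that the paper leaves implicit in the sentence ``the other direction holds since \eqref{eq:conditionnondegenerate} implies $\BE D^nF=0$.''
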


\begin{proof}
Let us assume that $\BV F=0$. Then it follows from \eqref{covcha} that
\begin{equation}\label{eq:assumptionzero}
\BE D^n_{x_1,\hdots,x_n}F=0, \quad \lambda^n\text{-a.e. }(x_1,\hdots,x_n)\in\BX^n,
\end{equation}
for all $n\in\N$. Now it can be shown by induction that
\begin{equation}\label{eq:consequencezero}
\BE[f(\eta+\sum_{i=1}^n \delta_{x_i})-f(\eta)]=0,
\quad \lambda^n\text{-a.e. }(x_1,\hdots,x_n)\in\BX^n.
\end{equation}
The case $n=1$ coincides with \eqref{eq:assumptionzero}. For $n\geq 2$ we have (using \eqref{Dsymmetric})
\begin{align*}
\BE D^n_{x_1,\hdots,x_n} F & =\BE \sum_{I\subset\{1,\hdots,n\}} (-1)^{n-|I|} f(\eta+\sum_{i\in I}\delta_{x_i})\\
& = \sum_{I\subset\{1,\hdots,n\}} (-1)^{n-|I|} \BE[f(\eta+\sum_{i\in I}\delta_{x_i})-f(\eta)]+\sum_{I\subset\{1,\hdots,n\}} (-1)^{n-|I|} \BE f(\eta).
\end{align*}
Here, the second summand is zero due to the alternating sign. The induction hypothesis yields that in the first sum only the summand for $I=\{1,\hdots,n\}$ remains for $\lambda^n$-a.e. $(x_1,\hdots,x_n)\in \BX^n$, which proves \eqref{eq:consequencezero}.

By \eqref{eq:consequencezero} we obtain that
$$\BE[f(\eta+\sum_{i\in I_1} \delta_{x_i})-f(\eta+\sum_{i\in I_2} \delta_{x_i})]=\BE[f(\eta+\sum_{i\in I_1} \delta_{x_i})-f(\eta)]-\BE[f(\eta+\sum_{i\in I_2} \delta_{x_i})-f(\eta)]=0$$
for $\lambda^n$-a.e.\ $(x_1,\hdots,x_n)\in\BX^n$.

The other direction holds since \eqref{eq:conditionnondegenerate} for all $k\in\N$ and all subsets $I_1,I_2$  implies \eqref{eq:assumptionzero} for all $n\in\N$, which is equivalent to $\BV F=0$.
\end{proof}

The next theorem provides a quantitative bound for the case that the variance is not zero.

\begin{theorem}\label{thm:Variancepositive}
Let $F\in L^2_\eta$ with a representative $f:\bN\to\R$ and assume that there are $k\in\N$, $I_1,I_2\subset \{1,\hdots,k\}$ with $I_1\cup I_2=\{1,\hdots,k\}$, $U\subset\BX^k$ measurable and $c>0$ such that
\begin{equation}\label{eq:lowerbound}
|\BE[f(\eta+\sum_{i\in I_1}\delta_{x_i})-f(\eta+\sum_{i\in I_2}\delta_{x_i})]|\geq c, \quad \lambda^k\text{-a.e. }(x_1,\hdots,x_k)\in U.
\end{equation}
Then,
$$
\BV F \geq \frac{c^2}{4^{k+1} k!} \min_{\emptyset\neq J\subset\{1,\hdots,k\}}\inf_{\substack{V\subset U\\ \lambda^k(V)\geq \lambda^k(U)/2^{k+1}}} \lambda^{|J|}(\Pi_J(V)),
$$
where $\Pi_J$ stands for the projection onto the components whose indices belong to $J$.
\end{theorem}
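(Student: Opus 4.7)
The plan is to leverage the Fock space formula $\BV F=\sum_{n\ge 1}n!\|f_n\|_n^2$ (a special case of \eqref{covcha} with $F=G$) together with a Möbius-type inversion that expresses pointwise evaluations of $f$ in terms of iterated differences, and then to extract a lower bound on some $\|f_{|K|}\|_{|K|}$ by a pigeonhole argument on $U$.

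First, a straightforward induction based on \eqref{Dsymmetric} yields the inversion identity
$$f\Bigl(\mu+\sum_{i\in I}\delta_{x_i}\Bigr)=\sum_{K\subset I} D^{|K|}_{x_K}f(\mu),\qquad I\subset\{1,\hdots,k\},$$
where $x_K:=(x_i)_{i\in K}$. Applying this to $I=I_1$ and $I=I_2$, subtracting, and taking expectations (the $K=\emptyset$ contributions cancelling), the quantity inside the absolute value in \eqref{eq:lowerbound} equals
$$h(x_1,\hdots,x_k)\;=\;\sum_{\emptyset\neq K\subset\{1,\hdots,k\}}\epsilon_K\, g_K(x_K),$$
where $\epsilon_K:=\mathbf{1}\{K\subset I_1\}-\mathbf{1}\{K\subset I_2\}\in\{-1,0,1\}$ and $g_K(x_K):=\BE D^{|K|}_{x_K}F=|K|!\,f_{|K|}(x_K)$.

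Since this sum has at most $2^k-1$ non-zero terms, Cauchy-Schwarz yields $c^2\le h(x)^2\le(2^k-1)\sum_{\emptyset\neq K}g_K(x_K)^2$ for $\lambda^k$-a.e.\ $x\in U$, so at each such $x$ at least one non-empty $K$ satisfies $g_K(x_K)^2\ge c^2/(2^k-1)^2$. Ordering the non-empty subsets and letting $K(x)$ denote the smallest admissible one produces a measurable partition $U=\bigsqcup_K U_K$ with at most $2^k-1$ non-empty parts; pigeonhole then furnishes some $K^*$ for which $V:=U_{K^*}$ satisfies
$$\lambda^k(V)\;\ge\;\frac{\lambda^k(U)}{2^k-1}\;\ge\;\frac{\lambda^k(U)}{2^{k+1}}.$$

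Since $g_{K^*}$ depends only on the coordinates indexed by $K^*$, the pointwise bound on $V$ transfers to $g_{K^*}(y)^2\ge c^2/(2^k-1)^2$ for every $y\in\Pi_{K^*}(V)$. Integrating and recalling $g_{K^*}=|K^*|!\,f_{|K^*|}$ gives
$$\|f_{|K^*|}\|_{|K^*|}^2\;\ge\;\frac{c^2\,\lambda^{|K^*|}(\Pi_{K^*}(V))}{(|K^*|!)^2(2^k-1)^2}.$$
Combining with the Fock-space lower bound $\BV F\ge|K^*|!\,\|f_{|K^*|}\|_{|K^*|}^2$ and the elementary estimates $|K^*|!\le k!$ and $(2^k-1)^2\le 4^{k+1}$, and noting that $V$ is admissible in the infimum on the right-hand side of the asserted inequality, yields the claim. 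The only technical subtlety I anticipate is the measurability of the projection $\Pi_{K^*}(V)$, which I would resolve via the measurable projection theorem (so that $\Pi_{K^*}(V)$ is analytic and hence universally measurable), or alternatively by interpreting $\lambda^{|K^*|}(\Pi_{K^*}(\cdot))$ as an outer measure throughout.
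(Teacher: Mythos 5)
Your proof is correct and takes a genuinely different route from the paper's. The paper proceeds in two steps: it first treats the special case $I_1=\{1,\dots,k\}$, $I_2=\emptyset$ using the telescoping identity $\sum_{\emptyset\neq J\subset\{1,\dots,k\}}D^{|J|}_{x_J}F = f(\eta+\sum_i\delta_{x_i})-f(\eta)$, applies the triangle inequality to bound $c$ by $\sum_J|\BE D^{|J|}_{x_J}F|$, and pigeonholes to find a single $I_0$ with $|\BE D^{|I_0|}_{x_{I_0}}F|\geq c/2^k$ on a subset of measure $\geq\lambda^k(U)/2^k$; the general $(I_1,I_2)$ case is then reduced to the special case via $|\BE[f_{I_1}-f_{I_2}]|\leq|\BE[f_{I_1}-f]|+|\BE[f_{I_2}-f]|$ and a measure-halving argument, which costs a factor of $4$ (producing the $4^{k+1}$) and pushes the measure threshold from $2^k$ to $2^{k+1}$. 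You instead apply the M\"obius inversion to \emph{both} $I_1$ and $I_2$ simultaneously, arriving at the signed decomposition $h(x)=\sum_{\emptyset\neq K}\epsilon_K g_K(x_K)$ with $\epsilon_K\in\{-1,0,1\}$, which eliminates the reduction step. You then use Cauchy-Schwarz (where the paper uses the triangle inequality) and the same pigeonhole on the at most $2^k-1$ parts of $U$. The endgame — projecting onto the Fock-space term $|K^*|!\,\|f_{|K^*|}\|^2$ and exploiting that $g_{K^*}$ depends only on the coordinates indexed by $K^*$ — is the same in both proofs. Your unified argument actually yields the slightly sharper intermediate constant $c^2/(k!(2^k-1)^2)$ with threshold $\lambda^k(U)/(2^k-1)$ before you relax to match the paper's stated $4^{k+1}$ and $2^{k+1}$, so your version is marginally cleaner and tighter. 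The measurability subtlety with $\Pi_{K^*}(V)$ that you flag at the end is present in the paper's proof as well and is not addressed there either; your suggested remedy (measurable projection or outer measure) is reasonable.
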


\begin{proof}
We begin with the special case $I_1=\{1,\hdots,k\}$ and $I_2=\emptyset$. For $x_1,\hdots,x_k\in\BX$ and an index set $J=\{j_1,\hdots,j_{|J|}\}\subset \{1,\hdots,k\}$ we put $D^{|J|}_{x_J}F=D^{|J|}_{x_{j_1},\hdots,x_{j_{|J|}}}F$. Now it follows from \eqref{Dsymmetric} that
\begin{align*}
\sum_{\emptyset\neq J\subset\{1,\hdots,k\}}D_{x_J}^{|J|}F & = \sum_{\emptyset\neq J\subset\{1,\hdots,k\}} \sum_{I\subset J} (-1)^{|J|-|I|} f(\eta+\sum_{i\in I}\delta_{x_i})\\
& = \sum_{I\subset \{1,\hdots,k\}} (-1)^{|I|} f(\eta+\sum_{i\in I}\delta_{x_i}) \sum_{I\subset J\subset\{1,\hdots,k\}, J\neq \emptyset} (-1)^{|J|}\\
& = f(\eta+\sum_{i\in\{1,\hdots,k\}}\delta_{x_i})-f(\eta),
\end{align*}
where we have used that the interior alternating sum is zero except for $I=\{1,\hdots,k\}$ and $I=\emptyset$. Combining this with \eqref{eq:lowerbound}, we obtain that, for $(x_1,\hdots,x_k)\in U$,
$$c\leq |\BE[f(\eta+\sum_{i\in\{1,\hdots,k\}}\delta_{x_i})-f(\eta)]|=|\BE\sum_{\emptyset\neq J \subset \{1,\hdots,k\}}D_{x_J}^{|J|}F| \leq \sum_{\emptyset\neq J \subset \{1,\hdots,k\}}| \BE D_{x_J}^{|J|}F|.$$
Hence, there must be a non-empty set $I_0\subset\{1,\hdots,k\}$ and a set $V\subset U$ such that
$$|\BE D^{|I_0|}_{x_{I_0}}F|\geq \frac{c}{2^{k}}, \quad \lambda^k\text{-a.e. }(x_1,\hdots,x_k)\in V, \quad \text{ and } \quad \lambda^k(V)\geq\frac{1}{2^k}\lambda^k(U).$$
Thus, it follows from \eqref{covcha} that
\begin{equation}\label{eq:lowerboundspecialcase}
\BV F\geq \frac{c^2}{4^k k!} \min_{\emptyset\neq J \subset\{1,\hdots,k\}}\inf_{\substack{V\subset U \\ \lambda^k(V)\geq \lambda^k(U)/2^k}} \lambda^{|J|}(\Pi_J(V)).
\end{equation}
For arbitrary $I_1,I_2\subset\{1,\hdots,k\}$ with $I_1\cup I_2=\{1,\hdots,k\}$ we can deduce from \eqref{eq:lowerbound} that there is a set $\tilde{U}\subset U$ such that
$$|\BE[f(\eta+\sum_{i\in I_1}\delta_{x_i})-f(\eta)]|\geq \frac{c}{2}, \quad \lambda^k\text{-a.e. }(x_1,\hdots,x_k)\in \tilde{U},$$
and
$$|\BE[f(\eta+\sum_{i\in I_2}\delta_{x_i})-f(\eta)]|\geq \frac{c}{2}, \quad \lambda^k\text{-a.e. }(x_1,\hdots,x_k)\in U\setminus\tilde{U}.$$
Without loss of generality, we can assume that $\lambda^k(\tilde{U})\geq\lambda^k(U)/2$. Hence, it follows from \eqref{eq:lowerboundspecialcase} that
\begin{align*}
\BV F & \geq \frac{(c/2)^2}{4^k k!} \min_{\emptyset\neq J\subset\{1,\hdots,k\}} \inf_{\substack{V\subset\tilde{U}\\ \lambda^k(V)\geq \lambda^k(\tilde{U})/2^k}} \lambda^{|J|}(\Pi_J(V))\\
& \geq \frac{c^2}{4^{k+1}k!} \min_{\emptyset\neq J\subset\{1,\hdots,k\}} \inf_{\substack{V\subset U\\ \lambda^k(V)\geq \lambda^k(U)/2^{k+1}}} \lambda^{|J|}(\Pi_J(V)),
\end{align*}
which concludes the proof.
\end{proof}

\subsection{The case of Poisson processes in Euclidean space}

Some of our results can be further simplified if we assume that $\BX$ is a subset of $\R^d$. In this case we use the following notation. Recall that $B^d(x,r)$ is a closed ball in $\R^d$ with centre $x$ and radius $r$, $B^d_r=B^d(0,r)$, and $B^d=B^d_1$, and that $\ell_d$ is the Lebesgue measure in $\R^d$. For a compact set $A$ let $\partial A$ be its boundary. We also let $r(A)$ stand for the inradius of a compact convex set $A$, and use the symbol $\kappa_d$ to denote the volume of $B_1^d$.

Throughout this subsection, we assume that $\eta_t$, $t>0$, is the restriction of a stationary Poisson process in $\R^d$ to a measurable set $H\subset\R^d$ whose intensity measure $\lambda_t$ is $t$ times the restriction of $\ell_d$ to $H$. By $\bN_H$ we denote the set of all locally finite point configurations in $H$.

\begin{theorem}\label{thm:VarianceEuclidean}
Let $F\in L^2_{\eta_t}$ and let $f:\bN_H\to\R$ be a representative of $F$. Let $k\in\N$ and $I_1,I_2\subset\{1,\hdots,k\}$ with $I_1\cup I_2=\{1,\hdots,k\}$ and define
$$
g(x_1,\hdots,x_k):=\big|\BE[f(\eta_t+\sum_{i\in I_1}\delta_{x_i})-f(\eta_t+\sum_{i\in I_2}\delta_{x_i})]\big|, \quad (x_1,\hdots,x_k)\in\R^{dk}.
$$
Assume that there are $\hat{x}_1,\hdots,\hat{x}_k\in\R^d$ such that $g$ is continuous in $(\hat{x}_1,\hdots,\hat{x}_k)$ and that there is a constant $c>0$ such that $g(\hat{x}_1,\hdots,\hat{x}_k)\geq c$. Moreover, let $A\subset\R^d$ and $\varepsilon>0$ be such that
$$
g(\hat{x}_1+z,\hat{x}_2+y_2+z,\hdots,\hat{x}_k+y_k+z)=g(\hat{x}_1,\hat{x}_2+y_2,\hdots,\hat{x}_k+y_k)
$$
for all $z\in A$ and $y_2,\hdots,y_k\in B^d_{\varepsilon}$. Then
$$
\tau :=\sup\{r\in(0,\varepsilon): g(\hat{x}_1,\hat{x}_2+y_2,\hdots,\hat{x}_k+y_k)>c/2 \text{ for all } y_2,\hdots,y_k\in B^d_r\}>0
$$
and
$$
\BV F \geq \frac{c^2}{4 \cdot 8^{k+1} k!} \min_{j=1,\hdots,k}  2^{-d(k-j)}(t\kappa_d \tau^d)^{j-1} t \ell_d(A).
$$
\end{theorem}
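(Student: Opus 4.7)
The strategy is to apply Theorem \ref{thm:Variancepositive} to a set $U \subset \R^{dk}$ built by combining the continuity of $g$ at $\hat{x} = (\hat{x}_1,\ldots,\hat{x}_k)$ with the translation invariance assumption. Continuity alone yields only a small neighbourhood of $\hat{x}$ on which $g > c/2$; translation invariance then allows this neighbourhood to be enlarged by adding an arbitrary common shift $z \in A$ to each coordinate. Positivity of $\tau$ is immediate from continuity at $\hat{x}$ and $g(\hat{x}) \geq c > c/2$: there exists $r \in (0,\varepsilon)$ such that $g(\hat{x}_1, \hat{x}_2+y_2, \ldots, \hat{x}_k+y_k) > c/2$ for all $y_2,\ldots,y_k \in B^d_r$, whence $\tau \geq r > 0$.

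Fix any $\tau' \in (0,\tau)$. By the translation invariance hypothesis and the definition of $\tau$, $g > c/2$ on the set
\[
U := \{(\hat{x}_1 + z, \hat{x}_2 + y_2 + z, \ldots, \hat{x}_k + y_k + z) : z \in A,\; y_2, \ldots, y_k \in B^d_{\tau'} \}.
\]
This $U$ is the image of $A \times (B^d_{\tau'})^{k-1}$ under an injective affine map with unit Jacobian, so $\lambda_t^k(U) = t^k \ell_d(A) (\kappa_d \tau'^{\,d})^{k-1}$. Applying Theorem \ref{thm:Variancepositive} with $c/2$ in place of $c$ then yields
\[
\BV F \geq \frac{(c/2)^2}{4^{k+1} k!} \min_{\emptyset \neq J \subset \{1,\ldots,k\}} \inf_V \lambda_t^{|J|}(\Pi_J(V)),
\]
where $V$ ranges over measurable subsets of $U$ with $\lambda_t^k(V) \geq \lambda_t^k(U)/2^{k+1}$.

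To bound $\lambda_t^{|J|}(\Pi_J(V))$ from below, the plan is to use Fubini's theorem in the form $\lambda_t^{|J|}(\Pi_J(V)) \geq \lambda_t^k(V) / \sup_p \lambda_t^{|J^c|}(U_p)$, where $U_p$ denotes the fibre of $U$ over $p \in \Pi_J(U)$. A direct case analysis (according to whether $1 \in J$) based on the parameterisation above shows that each fibre $U_p$ is contained in a product of $|J^c|$ sets of $\ell_d$-measure at most $2^d \kappa_d \tau'^{\,d}$; this follows from bounding each admissible coordinate range by a ball of radius $2\tau'$ via the triangle inequality applied to the parameters $z$ and $y_j$ generating the fibre. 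Hence $\sup_p \lambda_t^{|J^c|}(U_p) \leq 2^{d(k-j)}(t\kappa_d \tau'^{\,d})^{k-j}$ with $j = |J|$. Substituting this into the Fubini bound, using $\lambda_t^k(V) \geq \lambda_t^k(U)/2^{k+1}$, letting $\tau' \nearrow \tau$, and simplifying via $4^{k+1}\cdot 2^{k+1} = 8^{k+1}$ produces exactly the asserted lower bound.

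The main obstacle lies in the fibre estimate: since $U$ is a ``diagonal thickening'' of $\{(\hat{x}_1 + z, \ldots, \hat{x}_k + z) : z \in A\}$ rather than a genuine product set, the fibre $U_p$ depends nontrivially on $p$. When $1 \in J$ the $z$-value is determined by $p$ and each remaining coordinate varies independently in $B^d_{\tau'}$, which is unproblematic; when $1 \notin J$, the admissible $z$ is constrained but not fully determined, so the remaining coordinates are coupled through $z$. Tracking this coupling accurately---and thereby obtaining the extra factor $2^{d(k-j)}$ responsible for the $j$-dependent term in the final bound---is the delicate point of the proof.
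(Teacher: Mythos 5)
Your proposal is correct and follows essentially the same route as the paper: define the diagonally thickened set $U$, compute $\lambda_t^k(U)$, bound the projections of its subsets via fibre estimates based on the triangle inequality, and apply Theorem~\ref{thm:Variancepositive} with $c/2$. The only place the paper is slightly more streamlined is that it observes $y_i - y_j \in B^d(\hat{x}_i - \hat{x}_j, 2\tau)$ for \emph{all} pairs $(i,j)$ whenever $(y_1,\ldots,y_k)\in U$, which yields the fibre bound $(2^d\kappa_d\tau^d)^{k-|J|}$ uniformly (anchoring at an arbitrary $j_0\in J$) without the case split on whether $1\in J$ that you flag as delicate.
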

\begin{proof}
The continuity of $g$ in $(\hat{x}_1,\hdots,\hat{x}_k)$ and the assumption $g(\hat{x}_1,\hdots,\hat{x}_k)\geq c$ ensure that $\tau>0$. Now we define
$$
U:=\{(\hat{x}_1+z,\hat{x}_2+y_2+z,\hdots,\hat{x}_k+y_k+z): z\in A, y_2,\hdots,y_k\in B^d_\tau\}.
$$
Note that $g>c/2$ on $U$. A straightforward computation shows that
\begin{equation}\label{eq:MeasureU}
\lambda_t^k(U)= (t\kappa_d \tau^d)^{k-1} t\ell_d(A).
\end{equation}
In order to apply Theorem \ref{thm:Variancepositive}, we have to compute
$$
\min_{\emptyset\neq J\subset \{1,\hdots,k\}}\inf_{\substack{V\subset U\\ \lambda_t^k(V)\geq \lambda_t^k(U)/2^{k+1}}} \lambda^{|J|}_t(\Pi_J(V)).
$$
Let $\emptyset \neq J\subset\{1,\hdots,k\}$ and let $x_J$ be the components of $x=(x_1,\hdots,x_k)\in \R^{dk}$ whose indices belong to $J$. By definition of $U$, we have that $y_i-y_j\in B^d(\hat{x}_i-\hat{x}_j, 2\tau)$, $i,j\in\{1,\hdots,k\}$, for all $(y_1,\hdots,y_k)\in U$. This means that, for any given $y_J\in \R^{d|J|}$,
$$
\lambda_t^{k-|J|}(\{y_{J^C}\in \R^{d(k-|J|)}: (y_J,y_{J^C})\in U\}) \leq (2^dt \kappa_d \tau^d)^{k-|J|}.
$$
For any $V\subset U$, this provides the second inequality in
\begin{align*}
\lambda_t^k(V) & \leq t^k \int_{(\R^d)^k} \I\{x_J\in \Pi_J(V)\} \I\{(x_1,\hdots,x_k)\in U\} \, \dint x_1 \hdots \dint x_k\\
 & \leq t^{|J|} \int_{(\R^d)^{|J|}} \I\{x_J\in \Pi_J(V)\}  (2^dt \kappa_d \tau^d)^{k-|J|} \, \dint x_J = \lambda^{|J|}_t(\Pi_J(V))  (2^d t \kappa_d \tau^d)^{k-|J|}.
\end{align*}
Consequently, for any $V\subset U$ and $\emptyset \neq J \subset \{1,\hdots,k\}$ we have
$$\lambda_t^{|J|}(\Pi_J(V))\geq (2^dt\kappa_d \tau^d)^{-(k-|J|)} \lambda_t^k(V).$$
Together with \eqref{eq:MeasureU}, we obtain that
\begin{align*}
\min_{\emptyset \neq J \subset \{1,\hdots,k\}} \inf_{\substack{ V\subset U\\ \lambda_t^k(V)\geq \lambda_t^k(U)/2^{k+1}}} \lambda^{|J|}_t(\Pi_J(V)) & \geq \min_{j=1,\hdots,k} (2^dt\kappa_d \tau^d)^{-(k-j)} 2^{-k-1} (t\kappa_d \tau^d)^{k-1} t\ell_d(A)\\
& \geq 2^{-k-1} \min_{j=1,\hdots,k}  2^{-d(k-j)}(t\kappa_d \tau^d)^{j-1} t \ell_d(A),
\end{align*}
which concludes the proof.
\end{proof}

For a family of Poisson functionals $(F_t)_{t\geq 1}$ depending on the intensity of the underlying Poisson process the following corollary ensures that the asymptotic variance does not degenerate.

\begin{corollary}\label{corol:VarianceEuclidean}
Let $F_t\in L^2_{\eta_t}$ with a representative $f_t:\bN_H\to\R$ for $t\geq 1$.  Let $k\in\N$ and $I_1,I_2\subset\{1,\hdots,k\}$ with $I_1\cup I_2=\{1,\hdots,k\}$ and define
$$
g_t(x_1,\hdots,x_k):=\big|\BE[f_t(\eta_t+\sum_{i\in I_1}\delta_{x_i})-f_t(\eta_t+\sum_{i\in I_2}\delta_{x_i})]\big|, \quad (x_1,\hdots,x_k)\in\R^{dk},
$$
for $t\geq 1$. Assume that there are $\hat{x}_1,\hdots,\hat{x}_k\in\R^d$ such that $g_1$ is continuous in $(\hat{x}_1,\hdots,\hat{x}_k)$ and $g_1(\hat{x}_1,\hdots,\hat{x}_k)>0$. Moreover, let $A\subset\R^d$ with $\ell_d(A)>0$ and $\varepsilon>0$ be such that
\begin{align*}
& g_t(\hat{x}_1+z,\hat{x}_1+z+t^{-1/d}(\hat{x}_2-\hat{x}_1+y_2),\hdots,\hat{x}_1+z+t^{-1/d}(\hat{x}_k-\hat{x}_1+y_k))\\
& =g_1(\hat{x}_1,\hat{x}_2+y_2,\hdots,\hat{x}_k+y_k)
\end{align*}
for all $z\in A\cup\{0\}$, $y_2,\hdots,y_k\in B^d_{\varepsilon}$ and $t\geq 1$. Then there is a constant $\sigma>0$ such that $\BV F_t \geq \sigma t$ for $t\geq 1$.
\end{corollary}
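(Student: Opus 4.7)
The plan is to apply Theorem \ref{thm:VarianceEuclidean} to $F_t$ at a point obtained from $(\hat x_1,\dots,\hat x_k)$ via the intrinsic $t^{-1/d}$ rescaling. Concretely, for each $t\ge 1$, I set $\tilde x_1:=\hat x_1$ and $\tilde x_i:=\hat x_1+t^{-1/d}(\hat x_i-\hat x_1)$ for $i=2,\dots,k$, and I use the scaling hypothesis, with $y_i\in B^d_{\varepsilon_t}$ where $\varepsilon_t:=t^{-1/d}\varepsilon$, in the form
\[
g_t\!\big(\tilde x_1+z,\tilde x_2+y_2+z,\dots,\tilde x_k+y_k+z\big)=g_1\!\big(\hat x_1,\hat x_2+t^{1/d}y_2,\dots,\hat x_k+t^{1/d}y_k\big),
\]
valid for $z\in A\cup\{0\}$. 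Taking $z=0$ and $y_i=0$ gives $g_t(\tilde x_1,\dots,\tilde x_k)=g_1(\hat x_1,\dots,\hat x_k)=:c>0$. Subtracting the identity for two values of $z\in A\cup\{0\}$ shows that $g_t$ satisfies the translation invariance required by Theorem \ref{thm:VarianceEuclidean} with set $A$ and radius $\varepsilon_t$.

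For the continuity hypothesis, I observe that the same scaling identity (with $z=0$) realises $g_t$ on the slice $\{\tilde x_1\}\times\prod_{i=2}^k B^d(\tilde x_i,\varepsilon_t)$ as a composition of $g_1$ with the affine map $y_i\mapsto t^{1/d}y_i$; continuity of $g_1$ at $(\hat x_1,\dots,\hat x_k)$ therefore transfers to the continuity of $g_t$ at $(\tilde x_1,\dots,\tilde x_k)$ in the last $k-1$ coordinates, which is the only continuity invoked in the proof of Theorem \ref{thm:VarianceEuclidean} (it is used solely to guarantee positivity of the quantity $\tau$).

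Applying Theorem \ref{thm:VarianceEuclidean} to $F_t$ with these data, I obtain some $\tau_t>0$ and the bound
\[
\BV F_t\ge \frac{c^{2}}{4\cdot 8^{k+1}k!}\min_{j=1,\dots,k}2^{-d(k-j)}\bigl(t\kappa_d\tau_t^{d}\bigr)^{j-1}t\,\ell_d(A).
\]
The key algebraic miracle is that $\tau_t=t^{-1/d}\tau_1$: the same scaling identity shows that the inequality $g_t(\tilde x_1,\tilde x_2+y_2,\dots)>c/2$ for $y_i\in B^d_r$ is equivalent to $g_1(\hat x_1,\hat x_2+z_2,\dots)>c/2$ for $z_i\in B^d_{t^{1/d}r}$, so $\tau_t$ and $\tau_1$ scale by exactly $t^{-1/d}$ (and $\tau_1>0$ is guaranteed by the continuity of $g_1$ together with $g_1(\hat x_1,\dots,\hat x_k)\ge c$). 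Consequently $t\kappa_d\tau_t^{d}=\kappa_d\tau_1^{d}$ is independent of $t$, and the displayed lower bound collapses to $\sigma t$ with
\[
\sigma:=\frac{c^{2}\ell_d(A)}{4\cdot 8^{k+1}k!}\min_{j=1,\dots,k}2^{-d(k-j)}\bigl(\kappa_d\tau_1^{d}\bigr)^{j-1}>0.
\]

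The only genuinely delicate point is the bookkeeping that makes every factor of $t$ inside the bound of Theorem \ref{thm:VarianceEuclidean} cancel against the power of $\tau_t$, leaving precisely one factor of $t$ from the $t\ell_d(A)$ term; this is exactly what forces the $t^{-1/d}$ scaling to appear in the hypothesis in the first place. Once that is verified, the conclusion $\BV F_t\ge \sigma t$ for $t\ge 1$ is immediate.
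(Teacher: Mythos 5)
Your proof is correct and takes essentially the same route as the paper: rescale the reference point via $\hat x_i^{(t)}=\hat x_1+t^{-1/d}(\hat x_i-\hat x_1)$, use the scaling hypothesis to show $\tau_t=t^{-1/d}\tau_1$, and then apply Theorem \ref{thm:VarianceEuclidean}, where the cancellation $t\kappa_d\tau_t^d=\kappa_d\tau_1^d$ collapses the bound to $\sigma t$. The only difference is that you spell out the continuity transfer from $g_1$ to $g_t$ explicitly, which the paper leaves implicit; this is a sound elaboration, not a new idea.
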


\begin{proof}
Define $c:= g_1(\hat{x}_1,\hdots ,\hat{x}_k)$. For any $t\geq 1$ let $\varepsilon_t=t^{-1/d}\varepsilon$ and
\begin{align*}
\tau_t & :=\sup\{r\in(0,\varepsilon_t): g_t(\hat{x}_1,\hat{x}_1+t^{-1/d}(\hat{x}_2-\hat{x}_1)+y_2,\hdots,\hat{x}_1+t^{-1/d} (\hat{x}_k-\hat{x}_1)+y_k)>c/2\\
 & \quad \quad \quad  \text{ for all } y_2,\hdots,y_k\in B^d_r\}\\
& =t^{-1/d}\sup\{r\in(0,\varepsilon): g_1(\hat{x}_1,\hat{x}_2+y_2,\hdots,\hat{x}_k+y_k)>c/2 \text{ for all } y_2,\hdots,y_k\in B^d_r\}\\
& =t^{-1/d} \tau_1.
\end{align*}
Choosing $\hat{x}_i^{(t)}=\hat{x}_1+t^{-1/d}(\hat{x}_i-\hat{x}_1)$, $i\in\{1,\hdots,k\}$, we can apply Theorem \ref{thm:VarianceEuclidean} for every $t\geq 1$, which yields the assertion.
\end{proof}

\section{Stabilizing Poisson functionals}\label{s:stab}

The following result is the main bound used in the geometric applications discussed in Subsection \ref{ss:kn} and Subsection \ref{ss:pvt} and the underlying result of Proposition \ref{introex}.

\begin{theorem}\label{thm:Stabilizing}
Let $F\in \dom D$ with $\BV F>0$ and denote by $N$ a standard Gaussian random variable. Assume that there are constants $c_1,c_2,p_1,p_2>0$ such that
\begin{equation}\label{eqn:Conditionp1}
\BE |D_xF|^{4+p_1} \leq c_1, \quad \lambda\text{-a.e. }x\in\BX,
\end{equation}
and
\begin{equation}\label{eqn:Conditionp2}
\BE |D^2_{x_1,x_2}F|^{4+p_2} \leq c_2, \quad \lambda^2\text{-a.e. }(x_1,x_2)\in\BX^2,
\end{equation}
and let $\overline{c}=\max\{1,c_1,c_2\}$. Then
\begin{align*}
 d_W\bigg(\frac{F-\BE F}{\sqrt{\BV F}},N\bigg)
 & \leq \frac{5\overline{c}}{\BV F} \bigg[ \int  \bigg(\int \BP(D^2_{x_1,x_2}F\neq 0)^{p_2/(16+4p_2)} \,  \lambda(\dint x_2) \bigg)^2 \, \lambda(\dint x_1)\bigg]^{1/2} \\
 & \quad +\frac{\overline{c}}{(\BV F)^{3/2}} \int \BP(D_xF\neq 0)^{(1+p_1)/(4+p_1)} \, \lambda(\dint x)
\end{align*}
and
\begin{align*}
 d_K\bigg(\frac{F-\BE F}{\sqrt{\BV F}},N\bigg)
 & \leq \frac{5\overline{c}}{\BV F} \bigg[ \int  \bigg(\int \BP(D^2_{x_1,x_2}F\neq 0)^{p_2/(16+4p_2)} \,  \lambda(\dint x_2) \bigg)^2 \, \lambda(\dint x_1)\bigg]^{1/2} \\
 & \quad +\frac{\overline{c} \, \Gamma_F^{1/2}}{\BV F} + \frac{2\overline{c} \, \Gamma_F}{(\BV F)^{3/2}} +\frac{\overline{c} \, \Gamma_F^{5/4}+2\overline{c} \,  \Gamma_F^{3/2}}{(\BV F)^2}  \\
& \quad +\frac{\sqrt{6}\overline{c}+\sqrt{3}\overline{c}}{\BV F} \bigg[ \int \BP(D^2_{x_1,x_2}F\neq 0)^{p_2/(8+2p_2)} \, \lambda^2(\dint(x_1,x_2))\bigg]^{1/2}.
\end{align*}
with
$$
\Gamma_F:=\int \BP(D_xF\neq 0)^{p_1/(8+2p_1)} \, \lambda(\dint x).
$$
\end{theorem}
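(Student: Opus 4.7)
The plan is to apply the general bounds of Theorems \ref{thm:maindW} and \ref{thm:mainKolmogorov} to the standardized functional $\tilde F:=(F-\BE F)/\sqrt{\BV F}$ and to control each resulting $\gamma_i$ via H\"older's inequality and the hypotheses \eqref{eqn:Conditionp1}-\eqref{eqn:Conditionp2}. Since $D_xF$ and $D^2_{x_1,x_2}F$ are unaffected by adding a constant to $F$, one may assume $\BE F=0$ at the outset; the normalization by $\sqrt{\BV F}$ then pulls a factor of $(\BV F)^{-\alpha}$ out of each $\gamma_i$ with the appropriate power $\alpha$.

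The central estimate is the elementary H\"older bound
\begin{equation*}
\BE |D_xF|^k = \BE\bigl[|D_xF|^k\,\I\{D_xF\ne 0\}\bigr] \leq c_1^{k/(4+p_1)}\,\BP(D_xF\ne 0)^{(4+p_1-k)/(4+p_1)}, \quad 1\leq k\leq 4+p_1,
\end{equation*}
together with its counterpart for $D^2_{x_1,x_2}F$. Combined with Cauchy-Schwarz inside the integrals defining $\gamma_1$ and $\gamma_2$ (applied so as to separate the two $D^2$-factors), this produces a product of two indicators of $\{D^2\ne 0\}$ each raised to the power $p_2/(4(4+p_2))=p_2/(16+4p_2)$; Fubini then identifies the resulting triple integral with the expression in the leading term of the statement, and the sum of the prefactors coming from $\gamma_1$ and $\gamma_2$ is bounded by $5\overline c$. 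The same H\"older step directly controls $\gamma_3$ by $\overline c(\BV F)^{-3/2}\int\BP(D_xF\ne 0)^{(1+p_1)/(4+p_1)}\lambda(\dint x)$, completing the Wasserstein bound. For the Kolmogorov estimate, $\gamma_5$ yields $\overline c\Gamma_F^{1/2}/\BV F$ after relaxing the exponent $p_1/(4+p_1)$ to $p_1/(2(4+p_1))$ (legitimate because $\BP\leq 1$ and the relaxed exponent is smaller); splitting $\gamma_6$ into its two summands and applying $[a+b]^{1/2}\leq a^{1/2}+b^{1/2}$ produces the final line $(\sqrt 6+\sqrt 3)\overline c M_F/\BV F$, where $M_F$ is the integral on the last line of the statement.

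The most delicate piece is $\gamma_4$, which requires a preliminary estimate of $\BE\tilde F^4$. Applying Lemma \ref{lem:BoundFourthMoment} to $\tilde F$ and using the H\"older bound on both candidates in the maximum yields
$\BE\tilde F^4\leq\max\bigl\{256\overline c\Gamma_F^2/(\BV F)^2,\ 4\overline c\Gamma_F/(\BV F)^2+2\bigr\}$;
extracting the fourth root via $[a+b]^{1/4}\leq a^{1/4}+b^{1/4}$ and $\max\{A,B\}\leq A+B$ produces three summands of orders $\Gamma_F^{1/2}$, $\Gamma_F^{1/4}$, and the constant $2^{1/4}$. Multiplying these by the H\"older bound $\overline c^{3/4}\Gamma_F/(\BV F)^{3/2}$ for $\int[\BE(D_x\tilde F)^4]^{3/4}\lambda(\dint x)$ and combining with the Kolmogorov estimate for $\gamma_3$ collects exactly the remaining contributions $\overline c\Gamma_F^{5/4}/(\BV F)^2$, $2\overline c\Gamma_F^{3/2}/(\BV F)^2$ and $2\overline c\Gamma_F/(\BV F)^{3/2}$ appearing in the statement. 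The main bookkeeping obstacle is to keep the overall power of $\overline c$ down to $\overline c^1$: this rests on the inequality $c_1^{k/(4+p_1)}\leq\overline c^{k/(4+p_1)}\leq\overline c^{k/4}$ (valid since $\overline c\geq 1$) and on the factorisation $\overline c^{1/4}\cdot\overline c^{3/4}=\overline c$, obtained by routing the $3/4$-power of $c_1^{4/(4+p_1)}$ through $\overline c^{3/4}$ rather than the cruder $\overline c^1$.
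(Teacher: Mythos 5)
Your proposal is correct and follows essentially the same route as the paper: apply Theorems~\ref{thm:maindW} and~\ref{thm:mainKolmogorov} to the standardized functional, bound each $\gamma_i$ through the H\"older estimate $\BE|D_xF|^k\leq c_1^{k/(4+p_1)}\BP(D_xF\ne 0)^{(4+p_1-k)/(4+p_1)}$ (and its $D^2$-analogue), use Lemma~\ref{lem:BoundFourthMoment} for the fourth moment entering $\gamma_4$, and relax exponents via $\BP\leq 1$ so that only $\Gamma_F$ and the stated $D^2$-probability integrals appear. The one small imprecision is in the treatment of $\gamma_2$: the Cauchy--Schwarz step there naturally yields exponent $p_2/(8+2p_2)$ on the two $D^2$-probabilities, not $p_2/(16+4p_2)$ as you state; one must then relax that exponent to $p_2/(16+4p_2)$ (again using $\BP\leq 1$) before merging with the $\gamma_1$-contribution into the leading $5\overline c$-term — but this is the same routine move you invoke elsewhere and does not affect the argument.
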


\begin{proof}
For the proof we estimate the right-hand sides of the bounds in Theorem \ref{thm:maindW} and Theorem \ref{thm:mainKolmogorov}. It follows from H\"older's inequality and the assumptions \eqref{eqn:Conditionp1} and \eqref{eqn:Conditionp2} that
\begin{align*}
\BE \big(D_xF\big)^4 & \leq \BP(D_xF\neq 0)^{p_1/(4+p_1)} \big[\BE |D_xF|^{4+p_1} \big]^{4/(4+p_1)}\\
 & \leq c_1^{4/(4+p_1)} \BP(D_xF\neq 0)^{p_1/(4+p_1)},\\
 \BE \big|D_xF\big|^3 & \leq  c_1^{3/(4+p_1)} \BP(D_xF\neq 0)^{(1+p_1)/(4+p_1)}
\end{align*}
for $\lambda$-a.e.\ $x\in\BX$ and
\begin{align*}
\BE \big(D^2_{x_1,x_2}F\big)^4 & \leq \BP(D^2_{x_1,x_2}F\neq 0)^{p_2/(4+p_2)} \big[\BE |D^2_{x_1,x_2}F|^{4+p_2} \big]^{4/(4+p_2)}\\
& \leq c_2^{4/(4+p_2)} \BP(D^2_{x_1,x_2}F\neq 0)^{p_2/(4+p_2)}
\end{align*}
for $\lambda^2$-a.e.\ $(x_1,x_2)\in\BX^2$.
Together with further applications of H\"older's inequality, we obtain that
\begin{align*}
\gamma_1 & \leq \frac{4 c_1^{1/(4+p_1)} c_2^{1/(4+p_2)}}{\BV F} \bigg[\int \big( \BP(D^2_{x_1,x_3}F\neq 0) \BP(D^2_{x_2,x_3}F\neq 0)\big)^{p_2/(16+4p_2)} \, \lambda^3(\dint(x_1,x_2,x_3)) \bigg]^{1/2}\!\!, \allowdisplaybreaks\\
\gamma_2 & \leq \frac{c_2^{2/(4+p_2)}}{\BV F} \bigg[\int \big(\BP(D^2_{x_1,x_3}F\neq 0) \BP(D^2_{x_2,x_3}F\neq 0)\big)^{p_2/(8+2p_2)} \, \lambda^3(\dint(x_1,x_2,x_3)) \bigg]^{1/2}, \allowdisplaybreaks \\
\gamma_3 & \leq \frac{c_1^{3/(4+p_1)}}{(\BV F)^{3/2}} \int \BP(D_xF\neq 0)^{(1+p_1)/(4+p_1)} \, \lambda(\dint x), \allowdisplaybreaks \\
\gamma_4 & \leq \frac{c_1^{3/(4+p_1)}}{2 (\BV F)^{2}} \big[\BE (F-\BE F)^4\big]^{1/4} \int \BP(D_xF\neq 0)^{p_1/(8+2p_1)}  \, \lambda(\dint x), \allowdisplaybreaks \\
\gamma_5 & \leq \frac{c_1^{2/(4+p_1)}}{\BV F} \bigg[\int \BP(D_xF\neq 0)^{p_1/(4+p_1)} \, \lambda(\dint x)\bigg]^{1/2}, \allowdisplaybreaks \\
\gamma_6 & \leq \frac{\sqrt{6}c_1^{1/(4+p_1)} c_2^{1/(4+p_2)}}{\BV F} \bigg[\int \BP(D^2_{x_1,x_2}F\neq 0)^{p_2/(8+2p_2)}  \, \lambda^2(\dint(x_1,x_2)) \bigg]^{1/2} \allowdisplaybreaks \\
& \quad + \frac{\sqrt{3}c_2^{2/(4+p_2)}}{\BV F} \bigg[ \int \BP(D^2_{x_1,x_2}F\neq 0)^{p_2/(4+p_2)}  \, \lambda^2(\dint(x_1,x_2))\bigg]^{1/2}.
\end{align*}
By Lemma \ref{lem:BoundFourthMoment}, we have
\begin{align*}
 \frac{\BE (F-\BE F)^4}{(\BV F)^2} & \leq \max\bigg\{ 256 c_1^{4/(4+p_1)} \bigg[\int \BP(D_xF\neq 0)^{p_1/(8+2p_1)} \, \lambda(\dint x) \bigg]^2/(\BV F)^2,\\
& \hskip 3.5cm4 c_1^{4/(4+p_1)} \int \BP(D_xF\neq 0)^{p_1/(4+p_1)} \, \lambda(\dint x)/(\BV F)^2+2 \bigg\}\\
& \leq \max\bigg\{ 256 c_1^{4/(4+p_1)}\Gamma_F^2/(\BV F)^2,4 c_1^{4/(4+p_1)} \Gamma_F/(\BV F)^2+2 \bigg\}
%\\
% & \leq \frac{1}{(\BV F)^2} \bigg( \max\{256 c_1^{2/(4+p_1)},c_1^{4/(4+p_1)}\} \bigg[\int \BP(D_zF\neq 0)^{p_1/(8+2p_1)} \, \lambda(\dint z) \bigg]^2 +2\bigg)
\end{align*}
so that
$$
\gamma_4 \leq  \frac{c_1^{3/(4+p_1)}}{(\BV F)^{3/2}}  \Gamma_F +\frac{c_1^{4/(4+p_1)}}{(\BV F)^2}  \Gamma_F^{5/4} + \frac{2c_1^{4/(4+p_1)}}{(\BV F)^2}  \Gamma_F^{3/2}.
$$
Combining all these estimates concludes the proof.
\end{proof}

\begin{remark}\label{Remark62} \rm
As discussed in the introduction, Poisson functionals occurring in stochastic geometry are often given in the representation
$$
F=\int h(y,\eta) \, \eta(\dint y)
$$
with a measurable function $h: \BX\times \bN\to\R$. Such a Poisson functional has the first and second order difference operators
$$
D_xF=\int D_xh(y,\eta) \, \eta(\dint y)+h(x,\eta+\delta_x), \quad x\in\BX,
$$
and
$$
D^2_{x_1,x_2}F=\int D^2_{x_1,x_2}h(y,\eta) \, \eta(\dint y)+D_{x_1}h(x_2,\eta+\delta_{x_2})+D_{x_2}h(x_1,\eta+\delta_{x_1}), \quad x_1,x_2\in\BX.
$$
\end{remark}

%The following corollary considers the special case of a sequence of functionals of Poisson measures, whose intensities explode at infinity: it represents a general version of Proposition \ref{introex}, where the underlying Poisson measures do not live necessarily on a Euclidean space. For $t\geq 1$, we let $\eta_t$ be a Poisson process with intensity measure $\lambda_t=t\lambda$ with a fixed measure $\lambda$ such that $\lambda(\BX)<\infty$. In general we shall adopt the notation $F_t$, $t\geq 1$, in order to denote a generic functional of $\eta_t$.

%\begin{corollary}\label{cor:Stabilizingfinite}
%Let $F_t\in L^2_{\eta_t}$, $t\geq 1$, be such that
%$$
%\BE |D_xF_t|^{4+p_1}\leq c_1, \quad \lambda\text{-a.e. }x\in\BX, \quad t\geq 1,
%$$
%and
%$$
%\BE |D^2_{x_1,x_2}F_t|^{4+p_2}\leq c_2, \quad \lambda^2\text{-a.e. }(x_1,x_2)\in\BX^2, \quad t\geq 1,
%$$
%with constants $c_1,c_2,p_1,p_2>0$. Moreover, assume that $\BV F_t/t >v$, $t\geq 1$, with $v>0$ and that
%\begin{equation}\label{eq:Definitionm}
%m:=\sup_{x\in\BX,\ t\geq 1} \int \BP(D_{x,y}^2F_t\neq 0)^{p_2/(16+4p_2)} \, \lambda_t(\dint y)<\infty.
%\end{equation}
%Let $N$ be a standard Gaussian random variable. Then there are constants $C_W$ and $C_K$ depending on $c_1,c_2,p_1,p_2,v,m$ and $\lambda(\BX)$ such that
%$$
%d_W\bigg(\frac{F_t-\BE F_t}{\sqrt{\BV F_t}},N\bigg) \leq C_W t^{-1/2}, \quad t\geq 1,
%$$
%and
%$$
%d_K\bigg(\frac{F_t-\BE F_t}{\sqrt{\BV F_t}},N\bigg) \leq C_K t^{-1/2}, \quad t\geq 1.
%$$
%\end{corollary}

\begin{proof}[Proof of Proposition \ref{introex}]
The assertion follows by deducing the order in $t$ for all summands in both bounds appearing in the statement of Theorem \ref{thm:Stabilizing}.
\end{proof}

\section{Applications to stochastic geometry}\label{sec:StochasticGeometry}

\subsection{$k$-nearest neighbour graph}\label{ss:kn}

Let $\eta_t$ be a homogeneous Poisson process of intensity $t>0$ in a compact convex observation window $H\subset\R^d$ with interior points. For $k\in\N$ the $k$-nearest neighbour graph is constructed by connecting two distinct points $x,y\in\eta_t$ whenever $x$ is one of the $k$-nearest neighbours of $y$ or $y$ is one of the $k$-nearest neighbours of $x$. In the following, we investigate for $\alpha\geq 0$ the sum $L_t^{(\alpha)}$ of the $\alpha$-th powers of the edge lengths of the $k$-nearest neighbour graph, that is
$$
L_t^{(\alpha)}=\frac{1}{2}\sum_{(x,y)\in\eta^2_{t,\neq}} \I\{\text{$x$ $k$-nearest neighbour of $y$ or $y$ $k$-nearest neighbour of $x$}\} \|x-y\|^\alpha.
$$
Here and in the following we identify simple point processes with their support and denote by $\eta_{t,\neq}^2$ the set of all pairs of distinct points of $\eta_t$. For $\alpha=0$, $L_t^{(\alpha)}$ is the number of edges and for $\alpha=1$ the total edge length. We are in particular interested in the asymptotic behaviour of $L_t^{(\alpha)}$ for $t\to\infty$.

Central limit theorems for the total edge length of the $k$-nearest neighbour graph were studied in \cite{AvramBertsimas1993,BaryshnikovYukich2005,BickelBreiman1983,Penrose07,PenroseYukich2001,PenroseYukich2005}. The first quantitative bound for the Kolmogorov distance of order of  $(\log t)^{1+3/4}t^{-1/4}$ was deduced by Avram and Bertsimas in \cite{AvramBertsimas1993}. This bound was improved to the order of $(\log t)^{3d}t^{-1/2}$ by Penrose and Yukich in \cite{PenroseYukich2005}, and the problem has remained open until now of whether the logarithmic factor in the rate of convergence could be removed at all. As shown in the following statement, the answer is indeed positive.

\begin{theorem}\label{thm:knearest}
Let $N$ be a standard Gaussian random variable. Then there are constants $C_\alpha$, $\alpha\geq 0$, only depending on $k$, $H$ and $\alpha$ such that
$$d_K\left(\frac{L_t^{(\alpha)}-\BE L_t^{(\alpha)}}{\sqrt{\BV L_t^{(\alpha)}}},N\right)\leq C_\alpha t^{-1/2}, \quad t\geq 1.$$
\end{theorem}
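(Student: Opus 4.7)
The plan is to deduce Theorem \ref{thm:knearest} as a direct application of Proposition \ref{introex} to $F_t = L_t^{(\alpha)}$, with $\BX = H$ and $\lambda$ the restriction of Lebesgue measure to the compact convex window $H$. Four conditions must be verified: the moment bounds \eqref{e:i1} on $D_x L_t^{(\alpha)}$ and \eqref{e:i2} on $D^2_{x_1,x_2} L_t^{(\alpha)}$, the stabilization-type bound \eqref{e:i3}, and the lower bound $\BV L_t^{(\alpha)} \geq v\, t$ for some $v>0$ and all $t \geq 1$.

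First, I would give explicit formulas for the difference operators. Adding a point $x$ to $\eta_t$ affects the $k$-nearest neighbour graph only through a locally bounded number of edges: $x$ gains at most $k$ outgoing edges to its own $k$ nearest neighbours, and each point $y \in \eta_t$ that admits $x$ among its $k$ nearest neighbours in $\eta_t + \delta_x$ gains one edge and may lose at most one. A classical kissing-number argument bounds the cardinality of such $y$ by a constant $c_{d,k}$ depending only on $d$ and $k$, and every affected edge has length at most $R(x, \eta_t)$, where $R(x, \eta_t)$ denotes the distance from $x$ to its $(c_{d,k}+k)$-th nearest point in $\eta_t$. A similar two-point analysis yields $|D^2_{x_1,x_2} L_t^{(\alpha)}| \leq C_{d,k}\bigl(R(x_1, \eta_t)^\alpha + R(x_2, \eta_t)^\alpha\bigr)$ after adjusting the rank. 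Convexity of $H$ gives the uniform Poisson tail $\BP(R(x, \eta_t) > r) \leq c_1 e^{-c_2 t r^d}$ for all $x \in H$, so $R(x, \eta_t)$ has Weibull-type tails of scale $t^{-1/d}$ and moments of every order bounded by a constant times $t^{-\alpha/d}$ uniformly in $x$. Conditions \eqref{e:i1} and \eqref{e:i2} then follow for any $p_1, p_2 > 0$.

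Next, for \eqref{e:i3} I would use a cone-of-influence stabilization argument in the spirit of Penrose and Yukich. Partition $\R^d$ into finitely many cones with apex $x$ and sufficiently small solid angle, and set $R^{\ast}(x, \mu)$ equal to the maximum over these cones of the distance from $x$ to the $J$-th point of $\mu$ in the cone, for a suitable $J = J(d,k)$. By construction $D_x L_t^{(\alpha)}$ is a measurable function of $\eta_t \cap B^d(x, R^{\ast}(x, \eta_t))$ alone, and a standard Poisson estimate gives $\BP(R^{\ast}(x, \eta_t) > r) \leq c_3 e^{-c_4 t r^d}$ uniformly in $x \in H$. Hence $D^2_{x_1,x_2} L_t^{(\alpha)} \neq 0$ forces $\|x_1 - x_2\| \leq R^{\ast}(x_1, \eta_t+\delta_{x_2}) \vee R^{\ast}(x_2, \eta_t + \delta_{x_1})$, whence $\BP(D^2_{x_1,x_2} L_t^{(\alpha)} \neq 0) \leq c_5 e^{-c_6 t \|x_1-x_2\|^d}$. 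The substitution $z = t^{1/d}(y-x)$ then shows that $\int_H t\, \BP(D^2_{x,y} L_t^{(\alpha)} \neq 0)^\beta\, dy$ is bounded uniformly in $x \in H$ and $t \geq 1$ for every $\beta > 0$, yielding \eqref{e:i3}.

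Finally, I would obtain the variance lower bound from Corollary \ref{corol:VarianceEuclidean}, applied with $k = 1$, $I_1 = \{1\}$, $I_2 = \emptyset$, in which case $g_t(x) = |\BE D_x L_t^{(\alpha)}|$. Dominated convergence combined with the moment bounds of the first step gives continuity of $g_1$ at any interior point $\hat{x}$ of $H$, while the translation-invariance hypothesis of Corollary \ref{corol:VarianceEuclidean} holds on any ball $A$ strictly inside $H$, by the translation invariance of the $k$-nearest neighbour construction. Strict positivity $g_1(\hat{x}) > 0$ is the only truly geometric input, and can be obtained by showing that adding a deterministic point to a unit-intensity Poisson process strictly changes the expected $\alpha$-power edge sum, for instance by an explicit perturbative computation. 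Corollary \ref{corol:VarianceEuclidean} then yields $\BV L_t^{(\alpha)} \geq \sigma t$ for all $t \geq 1$, and combining the four verifications with Proposition \ref{introex} delivers the claimed Berry-Esseen rate. The main technical obstacle is producing the cone stabilization radius with uniform exponential tails all the way to the boundary of $H$; the remaining verifications are largely routine once this stabilization machinery is in place.
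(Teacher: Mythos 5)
Your overall strategy is the right one — applying Proposition~\ref{introex} to $t^{\alpha/d}L_t^{(\alpha)}$ and verifying its four hypotheses — and your cone-of-influence argument for \eqref{e:i3} is a valid substitute for the (A)--(C) event decomposition in the paper's proof. However, two of your verifications contain genuine gaps.

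First, the moment bound. You claim that every affected edge has length at most the distance $R(x,\eta_t)$ from $x$ to its $(c_{d,k}+k)$-th nearest neighbour. This controls the edges \emph{created} at $x$, but not the edges \emph{deleted} elsewhere. A deleted edge runs between a point $y$ that newly acquires $x$ as a $k$-nearest neighbour and $y$'s previous $k$-th nearest neighbour $z$; its length $\|y-z\|$ equals $y$'s old $k$-NN distance, which is bounded below by $\|y-x\|$ but has no a priori upper bound in terms of your $R(x,\eta_t)$ — think of an isolated $y$ sitting in a large void, with $x$ inserted into that void. The paper circumvents this by not attempting a deterministic bound at all: it introduces $M_2(x,\eta_t)$, the length of the longest removed edge, and establishes the uniform exponential tail $\BP(M_2(x,\eta_t)\ge s)\le \tilde C_2 e^{-t\tilde c_2 s^d}$ via a two-point Mecke formula, from which the moment bounds on $D_xF_t$ and $D^2_{x_1,x_2}F_t$ follow. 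You would need to reproduce an argument of that kind; the simple one-ball bound does not work.

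Second, the variance lower bound. Applying Corollary~\ref{corol:VarianceEuclidean} with a single added point ($k=1$, so $g_t(x)=|\BE D_xL_t^{(\alpha)}|$) cannot satisfy the required identity $g_t(\hat x_1+z)=g_1(\hat x_1)$ for all $t\ge 1$ and $z\in A$. That identity is a strict equality, derived from rescaling $\eta_t\cap H\overset{d}{=}t^{-1/d}(\eta_1\cap t^{1/d}H)$, and it holds exactly only when the quantity inside the expectation is determined by a fixed bounded neighbourhood of the added point, so that the change of window from $H$ to $t^{1/d}H$ is invisible. For a single added point, $D_x L_t^{(\alpha)}$ depends on $\eta_t\cap B^d(x,3R(x,\eta_t))$ with a \emph{random} radius that is unbounded, so the $t$-dependent window makes $g_t$ differ from $g_1$ (by an exponentially small, but nonzero, amount), and Corollary~\ref{corol:VarianceEuclidean} does not apply. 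The paper's Lemma~\ref{lem:VarianceNNG} handles exactly this obstruction by adding $m$ auxiliary ``screening'' points $\hat x_2,\dots,\hat x_{m+1}$ satisfying the covering condition \eqref{eq:conditionzs}, which deterministically caps the stabilization radius of $\hat x_1$ and restores exact locality. Without that screening configuration the hypotheses of the corollary fail, and your variance argument does not go through as written.
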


We prepare the proof of Theorem \ref{thm:knearest} by the following asymptotic result for the variance of $L_t^{(\alpha)}$. Although it follows from the exact variance asymptotics available in the literature (see \cite[Theorem 6.1]{PenroseYukich2001}, for example), we provide an independent proof, both for the sake of completeness, and in order to illustrate the application of the lower bounds for variances established in Section \ref{sec:Variancepositive}.

\begin{lemma}\label{lem:VarianceNNG}
For any $\alpha\geq 0$ there is a constant $\sigma_{\alpha}>0$ depending on $k$, $H$ and $\alpha$ such that
$$\BV L_t^{(\alpha)} \geq \sigma_{\alpha} t^{1-2\alpha/d}, \quad t\geq 1.$$
\end{lemma}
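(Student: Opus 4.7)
The plan is to apply Theorem~\ref{thm:Variancepositive} (or equivalently Corollary~\ref{corol:VarianceEuclidean}) to the rescaled functional $F_t := t^{\alpha/d} L_t^{(\alpha)}$, whose variance equals $t^{2\alpha/d}\BV L_t^{(\alpha)}$; a lower bound of the form $\BV F_t\ge \sigma t$ then translates directly into the desired $\BV L_t^{(\alpha)}\ge \sigma t^{1-2\alpha/d}$. I take a first-order insertion with $k=1$, $I_1=\{1\}$, $I_2=\emptyset$, so that the key quantity is
\[
g_t(x_1) = t^{\alpha/d}\big|\BE\big[L_t^{(\alpha)}(\eta_t+\delta_{x_1})-L_t^{(\alpha)}(\eta_t)\big]\big|,
\]
where $L_t^{(\alpha)}(\mu)$ denotes the representative of $L_t^{(\alpha)}$ applied to a generic configuration $\mu$. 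The crucial input is the scaling covariance of the $k$-nearest neighbour graph: under $\mu\mapsto t^{1/d}\mu$ the $k$-NN pairs are preserved and each edge length is multiplied by $t^{1/d}$, hence $L_t^{(\alpha)}(t^{1/d}\mu)=t^{\alpha/d}L_t^{(\alpha)}(\mu)$. Consequently $g_t(x_1)$ equals the modulus of the expected change of the functional for an intensity-$1$ Poisson process on the dilated window $t^{1/d}H$ when a point is added at $t^{1/d}x_1$.

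The main task is then to show that for a positive-measure interior subset $U\subset H$ one has $g_t(x_1)\ge c_0>0$ uniformly in $t\ge 1$ and $x_1\in U$. By the well-known exponential stabilization of the $k$-nearest neighbour graph, for $x_1$ deep in $H$ the quantity inside the modulus converges, as $t\to\infty$, to the limiting expected change $h_\infty$ associated with an intensity-$1$ Poisson process on $\R^d$; uniform convergence on $U$ transfers the asymptotic lower bound to all sufficiently large $t$, and a continuity/compactness argument takes care of the remaining bounded range of $t$.

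The principal obstacle is verifying that $h_\infty\neq 0$. One exploits the fact that inserting a point into a $k$-NN graph can only destroy edges among pre-existing points (never create new ones), so $h_\infty$ is the difference of two positive quantities: the expected sum of $\alpha$-th powers of new edges incident to the added point, and the expected loss of $\alpha$-th powers of the killed edges among old points. Computing $\partial_t \BE L_t^{(\alpha)}$ via a Mecke-type identity together with the classical asymptotic $\BE L_t^{(\alpha)}\sim C\, t^{1-\alpha/d}$ shows that $h_\infty$ is proportional to $(1-\alpha/d)$, and hence nonzero for every $\alpha\ne d$. For the critical exponent $\alpha=d$, I would instead apply Theorem~\ref{thm:Variancepositive} with $k=2$, $I_1=\{1,2\}$, $I_2=\emptyset$, choosing $x_2$ at distance of order $t^{-1/d}$ from $x_1$ in a direction that avoids cancellation; the dominating contribution then comes from the new edge $(x_1,x_2)$, whose $\alpha$-th power has the required order $t^{-\alpha/d}$. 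Once the uniform lower bound on $g_t$ is in place, the measure-theoretic factors entering Theorem~\ref{thm:Variancepositive} are straightforward to evaluate, yielding the asserted $\BV L_t^{(\alpha)}\ge \sigma_\alpha t^{1-2\alpha/d}$.
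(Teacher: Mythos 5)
Your high-level strategy (appeal to Theorem~\ref{thm:Variancepositive}/Corollary~\ref{corol:VarianceEuclidean} for the rescaled functional $t^{\alpha/d}L_t^{(\alpha)}$) is the same one the paper uses, but the way you try to verify its hypotheses has several genuine gaps and is quite different from the paper's argument. The paper does not work with a single insertion. It fixes $m$ points $z_1,\dots,z_m$ on the annulus $\{1/2\le\|z\|\le1\}$ with the covering property \eqref{eq:conditionzs}, places them at scale $\tau$ around $x$, and then inserts $x$ itself: by Lemma~\ref{lem:AddOneNNG} this \emph{localizes} the add-one cost to $B^d(x,3\tau)$, and on the event $\eta(B^d(x,\tau))=0$ (probability $e^{-\kappa_d\tau^d}$) the insertion produces exactly $k$ new edges of length at least $\tau/2$ and destroys none, while on the complement the change is at most $D_{d,k}\tau^\alpha$. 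Choosing $\tau$ small makes the mean add-one cost bounded away from $0$ for \emph{every} $\alpha\ge0$ (including $\alpha=d$), and the localization makes the translation/scaling identity required by Corollary~\ref{corol:VarianceEuclidean} hold \emph{exactly}, not merely asymptotically.

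Your $k=1$ route has the following problems. First, as you yourself note, the scaling covariance turns $g_t(x_1)$ into the add-one cost at $t^{1/d}x_1$ for a unit-intensity process on the dilated window $t^{1/d}H$; this is \emph{not} $g_1(x_1)$, so the exact invariance hypothesis of Corollary~\ref{corol:VarianceEuclidean} fails and you must fall back on Theorem~\ref{thm:Variancepositive} with a uniform lower bound on $g_t$. Second, to get that uniform bound you claim $h_\infty\neq0$ because $\partial_t\BE L_t^{(\alpha)}$ ``together with the classical asymptotic $\BE L_t^{(\alpha)}\sim Ct^{1-\alpha/d}$'' forces $h_\infty\propto(1-\alpha/d)$. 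This does not follow: an asymptotic for a function does not yield an asymptotic for its derivative without additional regularity, and the Margulis--Russo identity $\partial_t\BE L_t^{(\alpha)}=\int_H\BE D_xL_t^{(\alpha)}\,\dint x$ makes the reasoning circular (to control the right side you need precisely the information about $\BE D_xL_t^{(\alpha)}$ that you are trying to derive). Third, even granting $h_\infty\neq0$, you only obtain $g_t\ge c_0$ for $t$ large; your ``continuity/compactness argument takes care of the remaining bounded range'' presupposes $g_t(x_1)>0$ for every intermediate $(t,x_1)$, which is exactly what is at issue (the add-one cost is the difference of a gain term and a loss term and could vanish). Fourth, you need a separate ad hoc treatment for $\alpha=d$. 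The paper's $m$-point caging argument removes all four difficulties simultaneously; if you want a one-insertion proof you would at least need an independent argument that $\BE D_xL_t^{(\alpha)}$ has a definite sign and is bounded away from $0$ uniformly in $t\ge1$, which is the crux the proposal leaves open.
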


Throughout the proofs of the previous results we consider the Poisson functionals $F_t=t^{\alpha/d}L_t^{(\alpha)}$. By $l_t^{(\alpha)}: \bN_H\to\R$ we denote a representative of $F_t$. For $x\in\R^d$ and $\mu\in\bN_H$ we denote by $N(x,\mu)$ the $k$-nearest neighbours of $x$ with respect to the points of $\mu$ that are distinct from $x$. A crucial fact for controlling the difference operators of $F_t$ is given in the following Lemma:

\begin{lemma}\label{lem:AddOneNNG}
For $x\in H$ and $\mu\in \bN_H$ let
$$
R(x,\mu)=\max\big\{\sup\{\|z_1-z_2\|: z_1\in\mu, x\in N(z_1,\mu+\delta_x), z_2\in N(z_1,\mu)\}, \sup_{z\in N(x,\mu)}\|z-x\|\big\}.
$$
Then,
$$
D_xl_t^{(\alpha)}(\mu)=D_xl_t^{(\alpha)}\big(\mu\cap B^d(x,3R(x,\mu)) \big).
$$

%Then $\ell_t^{(\alpha)}(\mu+\delta_x)-\ell_t^{(\alpha)}(\mu)$ only depends on the points of $\mu$ that are in $B^d(x,3R(x,\mu))$ and on $x$.
\end{lemma}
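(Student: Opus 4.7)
The plan is to set $R := R(x,\mu)$ and $\mu' := \mu \cap B^d(x, 3R)$, express $l_t^{(\alpha)}(\mu + \delta_x) - l_t^{(\alpha)}(\mu)$ as a sum of $\alpha$-th powers of edge lengths over a finite collection of ``affected edges,'' and show that both this collection and the contribution of each edge depend only on $\mu'$; applying the identical reconstruction to $\mu'$ then yields $D_x l_t^{(\alpha)}(\mu) = D_x l_t^{(\alpha)}(\mu')$. First I would introduce $S := \{y \in \mu : x \in N(y, \mu + \delta_x)\}$, the set of points whose $k$-nearest neighbourhood acquires $x$. Since $N(z, \mu + \delta_x) = N(z, \mu)$ for every $z \in \mu \setminus S$, the only edges whose status differs between the $k$-nearest neighbour graphs of $\mu$ and of $\mu + \delta_x$ are those incident to $x$ or having an endpoint in $S$.

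The next step is to localise these affected edges via the two suprema defining $R$: the second supremum gives $\|z - x\| \leq R$ for all $z \in N(x, \mu)$, and the first gives $\|z_1 - z_2\| \leq R$ for every $z_1 \in S$ and $z_2 \in N(z_1, \mu)$. In particular, any $y \in S$ satisfies $\|y - x\| \leq \max_{z \in N(y,\mu)} \|y - z\| \leq R$ (otherwise $x$ could not enter the $k$-nearest set), and the single neighbour that can be displaced from $N(y, \mu)$ upon adding $x$ is the farthest element of $N(y, \mu)$, itself lying at distance at most $R$ from $y$. Consequently every endpoint of an affected edge lies in $B^d(x, 2R) \cap (\mu \cup \{x\}) \subset \mu' \cup \{x\}$.

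With the affected edges localised, I would then show that the $k$-nearest neighbour relation restricted to these pairs is entirely determined by $\mu'$. For an affected pair $(a, b)$ with $a \in (B^d(x, R) \cap \mu) \cup \{x\}$ and $\|a - b\| \leq R$, deciding whether $b \in N(a, \mu)$ amounts to counting the points of $\mu$ strictly closer to $a$ than $b$; every such point lies in $B^d(a, R) \subset B^d(x, 3R)$ and therefore belongs to $\mu'$. The same argument applies to $N(\cdot, \mu + \delta_x)$ and to determining which $y$ belong to $S$, so both the list of affected edges and each of their contributions coincide whether computed from $\mu$ or from $\mu'$.

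The main obstacle will be precisely this last reconstruction step, and in particular verifying that the constant $3$ in the radius $3R$ suffices: the ``outer'' ball must be large enough that every point potentially counted in a nearest-neighbour test centred at an endpoint inside $B^d(x, 2R)$, for a comparison distance at most $R$, still lies inside $\mu'$. This is exactly the content of the inclusion $B^d(a, R) \subset B^d(x, 3R)$ for $a \in B^d(x, 2R)$, obtained from the triangle inequality, and it is the step where careful bookkeeping is required to exclude any ``spurious'' change in the $k$-nearest structure on the $\mu'$-side of the identity.
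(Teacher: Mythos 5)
Your proposal follows essentially the same route as the paper's proof: introduce $S$, use the two suprema in the definition of $R$ to place the endpoints of all ``affected'' edges inside $B^d(x,2R)$, and argue that deciding the edge/non-edge status of each such pair only involves $\mu$ restricted to $B^d(x,3R)$. The paper's proof is more compressed — it never names $S$ and does not spell out the $2R$/$3R$ inclusions — but the substance and the decomposition into new and deleted edges are the same, so this is not a genuinely different argument.

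However, the step you flag as ``the main obstacle'' is a genuine gap, and not merely a matter of bookkeeping — and it is a gap shared with the paper's one-paragraph proof. Both arguments show that the changes produced by inserting $x$ into $\mu$ are decided by $\mu\cap B^d(x,3R)$; neither shows the converse, namely that inserting $x$ into $\mu'=\mu\cap B^d(x,3R)$ produces the \emph{same} list of changes. A point $y\in\mu'$ lying in the annulus $B^d(x,3R)\setminus B^d(x,R)$ can have its $k$-th nearest neighbour in $\mu$ outside $B^d(x,3R)$; after truncation, $x$ may enter $N(y,\mu'+\delta_x)$ even though $x\notin N(y,\mu+\delta_x)$, which changes the value of $D_xl_t^{(\alpha)}$ when $\alpha\neq 0$. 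For a concrete instance take $d=1$, $k=1$, $\alpha=1$, $x=0$, $\mu=\{-1,-1.1,\,2.5,\,4.5\}$: then $S=\emptyset$ and $R(x,\mu)=1$, so $\mu'=\{-1,-1.1,\,2.5\}$; a direct count gives $D_0 l^{(1)}(\mu)=1$ (only the edge $\{0,-1\}$ is added) but $D_0 l^{(1)}(\mu')=0$ (the edge $\{-1,2.5\}$ of length $3.5$ is removed and edges $\{0,-1\}$, $\{0,2.5\}$ of total length $3.5$ are added). So the asserted identity, as written, fails; the radius or the definition of $R$ needs to be adjusted before the reconstruction step can go through. You were right to be uneasy about this step, and you should not sign off on it as a routine triangle-inequality check.
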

\begin{proof}
Inserting the point $x$ can generate new edges and delete existing edges. The new edges are all emanating from $x$ and are, by definition of $R(x,\mu)$, within $B^d(x,R(x,\mu))$. An edge between two points $z_1,z_2\in\mu$ is deleted if the following situations (i) and (ii) are simultaneously verified: (i) $z_1$ has $x$ as a $k$-nearest neighbour and $z_2$ was a $k$-nearest neighbour of $z_1$ before $x$ was added, or $z_2$ has $x$ as a $k$-nearest neighbour and $z_1$ was a $k$-nearest neighbour of $z_1$ before $x$ was added, and (ii) if $x$ is added, $z_2$ is not a $k$-nearest neighbour of $z_1$ and  $z_1$ is not a $k$-nearest neighbour of $z_2$. Thus, the fact that the edge between $z_1$ and $z_2$ is deleted after adding $x$ only depends on the configuration of the points contained in the set $\big( B^d(z_1,\|z_1-z_2\|)\cup B^d(z_2,\|z_1-z_2\|)\big) \cap \mu$ and $x$, which concludes the proof.
\end{proof}

\begin{proof}[Proof of Lemma \ref{lem:VarianceNNG}]
In the sequel, we will use the fact that there are constants $D_{d,k}$ such that the vertices of a $k$-nearest neighbour graph in $\R^d$ have at most degree $D_{d,k}$ (for an argument for the planar case, which can be generalized to higher dimensions, we refer to the proof of Lemma 6.1 in \cite{PenroseYukich2001}).

Now one can choose $m\in\N$ and $z_1,\hdots,z_m\in\R^d$ with $1/2\leq \|z_i\|\leq 1$, $i\in\{1,\hdots,m\}$, such that
\begin{equation}\label{eq:conditionzs}
\big|\big\{i\in\{1,\hdots,m\}: \|z_i-y\| < \max\{\|y\|, \inf_{x\in \partial B^d}\|y-x\|\} \big\} \big|\geq k+1, \quad y\in B^d,
\end{equation}
and such that all pairwise distances between $z_1,\hdots,z_m$ and the origin are different.

For $x\in\interior(H)$ (where $\interior(H)$ stands for the interior of $H$), $\tau>0$ such that $B^d(x,\tau)\subset H$ and a point configuration $\mu\in\bN_H$ the expression
$$
l_1^{(\alpha)}(\mu+\sum_{i=1}^m \delta_{x+\tau z_i}+\delta_x)-l_1^{(\alpha)}(\mu+\sum_{i=1}^m \delta_{x+\tau z_i})
$$
only depends on the points of $\mu$ that are in $B^d(x,3\tau)$ and is not affected by changes of $\mu$ outside of $B^d(x,3\tau)$. This follows from Lemma \ref{lem:AddOneNNG} since \eqref{eq:conditionzs} implies that $R(x,\mu+\sum_{i=1}^m \delta_{x+\tau z_i})$ is given by the points of $\mu$ in $B^d(x,2\tau)$ and that $R(x,\mu+\sum_{i=1}^m \delta_{x+\tau z_i})\leq \tau$.

If $\mu(B^d(x,\tau))=0$, we obtain
$$
l_1^{(\alpha)}(\mu+\sum_{i=1}^m \delta_{x+\tau z_i}+\delta_x)-l_1^{(\alpha)}(\mu+\sum_{i=1}^m \delta_{x+\tau z_i}) \geq k (\tau/2)^\alpha.
$$
Indeed, adding the point $x$ generates $k$ new edges to points of $z_1,\hdots,z_m$, whose length is at least $\tau/2$, and does not delete any edges since by \eqref{eq:conditionzs} all other points keep their $k$-nearest neighbours. If $\mu(B^d(x,\tau))\neq 0$, we have
$$
|l_1^{(\alpha)}(\mu+\sum_{i=1}^m \delta_{x+\tau z_i}+\delta_x)-l_1^{(\alpha)}(\mu+\sum_{i=1}^m \delta_{x+\tau z_i})| \leq D_{d,k} \tau^\alpha
$$
since the balance of generated and deleted edges originating from the addition of the point $x$ equals at most $D_{d,k}$, and each edge has length at most $\tau$. Consequently, we have
\begin{align*}
& |\BE[l_1^{(\alpha)}(\eta_1+\sum_{i=1}^m \delta_{x+\tau z_i}+\delta_x)-l_1^{(\alpha)}(\eta_1+\sum_{i=1}^m \delta_{x+\tau z_i})]|\\
& \geq \exp(-\kappa_d\tau^d) k(\tau/2)^\alpha -(1-\exp(-\kappa_d\tau^d)) D_{d,k} \tau^\alpha.
\end{align*}
Now it is easy to see that the right-hand side is positive if $\tau\leq \tau_0$ with some $\tau_0>0$. Putting $\hat{x}_1=x$ and $\hat{x}_{i+1}=x+\tilde{\tau} z_i$, $i\in\{1,\hdots,m\}$, with $\tilde{\tau}>0$ sufficiently small, we have $\hat{x}_1,\hdots,\hat{x}_{m+1}\in\interior(H)$ with $\tilde{\tau}/2\leq \|\hat{x}_i-\hat{x}_{1}\|\leq \tilde{\tau}$, $i\in\{2,\hdots,m+1\}$, different pairwise distances, $B^d(\hat{x}_{1},4\tilde{\tau})\subset H$ and
$$
|\BE[l_1^{(\alpha)}(\eta_1+\sum_{i=1}^{m+1} \delta_{\hat{x}_i})-l_1^{(\alpha)}(\eta_1+\sum_{i=2}^{m+1} \delta_{\hat{x}_i})]|>0.
$$
We define $g_t: H^{m+1}\to\R$, $t\geq 1$, by
$$
g_t(x_1,\hdots,x_{m+1}):= |\BE[l_t^{(\alpha)}(\eta_t+\sum_{i=1}^{m+1} \delta_{x_i})-l_t^{(\alpha)}(\eta_t+\sum_{i=2}^{m+1} \delta_{x_i})]|.
$$
Obviously, we have $g_1(\hat{x}_1,\hdots,\hat{x}_{m+1})>0$, and the different pairwise distances imply that $g_1$ is continuous in $(\hat{x}_1,\hdots,\hat{x}_{m+1})$. Since the expectation in the definition of $g_1(\hat{x}_1,\hdots,\hat{x}_{m+1})$ only depends on the points of $\eta_1$ in $B^d(\hat{x}_{m+1},3\tilde{\tau})$ and, by Lemma \ref{lem:AddOneNNG}, such a property still holds if $\hat{x}_1,\hdots,\hat{x}_{m+1}$ are slightly disturbed, there are a set $A\subset\R^d$ with $\ell_d(A)>0$ and a constant $\varepsilon>0$ such that
$$
g_1(\hat{x}_1+z,\hat{x}_2+y_2+z,\hdots,\hat{x}_{m+1}+y_{m+1}+z)=g_1(\hat{x}_1,\hat{x}_2+y_{2},\hdots,\hat{x}_{m+1}+y_{m+1})
$$
for all $z\in A$ and $y_2,\hdots,y_{m+1}\in B^d_\varepsilon$. By the scaling property of a homogeneous Poisson process and the definition of $l_t^{(\alpha)}$ we have
\begin{align*}
& g_t(\hat{x}_1+z,\hat{x}_1+z+t^{-1/d}(\hat{x}_2-\hat{x}_1+y_2),\hdots,\hat{x}_1+z+t^{-1/d}(\hat{x}_{m+1}-\hat{x}_1+y_{m+1}))\\
& =g_1(\hat{x}_1,\hat{x}_2+y_2,\hdots,\hat{x}_{m+1}+y_{m+1})
\end{align*}
for all $z\in A\cup\{0\}$, $y_2,\hdots,y_{m+1}\in B^d_\varepsilon$ and $t\geq 1$. Now Corollary \ref{corol:VarianceEuclidean} concludes the proof.
\end{proof}

Moreover, we will use the following Lemma, which is shown in the proof of Lemma 2.5 in \cite{LastPenrose2013}.

\begin{lemma}\label{lem:Appendix}
Let $H\subset\R^d$ be a compact convex set with non-empty interior. Then there is a constant $c_H>0$ depending on $H$ such that
$$
\ell_d(B^d(x,r)\cap H) \geq c_H r^d
$$
for all $x\in H$ and $0< r \leq \max_{z_1,z_2\in H}\|z_1-z_2\|$.
\end{lemma}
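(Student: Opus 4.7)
The plan is to exploit the convexity of $H$ by constructing, for each $x\in H$, a homothetic copy of the convex hull of $\{x\}$ and a fixed interior ball of $H$, scaled so that it simultaneously fits inside $B^d(x,r)$ and stays within $H$. Since such a homothety scales $d$-dimensional volume by the $d$-th power of its ratio, this yields a bound of the required form $c_H r^d$ with a constant depending only on $H$.

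Concretely, since $H$ has non-empty interior, I would fix once and for all $z_0\in\interior(H)$ and $\rho>0$ such that $B^d(z_0,\rho)\subset H$, and write $D:=\max_{z_1,z_2\in H}\|z_1-z_2\|$ for the diameter of $H$. For an arbitrary $x\in H$, consider the convex hull $K:=\conv(\{x\}\cup B^d(z_0,\rho))$. By convexity of $H$, $K\subset H$, and since $K\supset B^d(z_0,\rho)$ one has $\ell_d(K)\geq\kappa_d\rho^d$. Moreover, every $y\in K$ lies on a segment from $x$ to some $w\in B^d(z_0,\rho)$, so
$$
\|y-x\|\leq\|w-x\|\leq\|z_0-x\|+\rho\leq D+\rho.
$$

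Now, for $r\in(0,D]$, set $t:=r/(D+\rho)\in(0,1]$ and define the homothety centered at $x$,
$$
K_t:=\{(1-t)x+ty:y\in K\}.
$$
Convexity of $K$ and the fact that $x\in K$ give $K_t\subset K\subset H$, while the previous estimate shows $\|(1-t)x+ty-x\|=t\|y-x\|\leq t(D+\rho)=r$, hence $K_t\subset B^d(x,r)$. Since a homothety with ratio $t$ scales $\ell_d$ by $t^d$,
$$
\ell_d(B^d(x,r)\cap H)\geq\ell_d(K_t)=t^d\ell_d(K)\geq\kappa_d\Bigl(\frac{\rho}{D+\rho}\Bigr)^{\!d}r^d,
$$
so the constant $c_H:=\kappa_d(\rho/(D+\rho))^d$ does the job.

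There is no genuine obstacle here; the only subtlety is choosing the right center of homothety. Scaling $K$ towards $z_0$ would keep one inside $H$ but not necessarily inside $B^d(x,r)$ when $x$ is far from $z_0$; scaling towards $x$ (as above) controls the distance to $x$ uniformly in $x\in H$ while preserving inclusion in $K$ via convexity, and this is precisely what makes the argument work uniformly in $x\in H$.
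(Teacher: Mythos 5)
Your proof is correct and complete. A point to note: the paper does not actually supply its own argument here—it states that the result ``is shown in the proof of Lemma 2.5 in'' Last and Penrose (2013) and leaves it at that. Your homothety argument is the standard elementary proof of such an estimate and is surely in the same spirit as the cited reference: fix an interior ball $B^d(z_0,\rho)\subset H$, form the cone-like set $K=\conv(\{x\}\cup B^d(z_0,\rho))\subset H$, and rescale toward $x$ with ratio $t=r/(D+\rho)$ so that the image lands inside $B^d(x,r)\cap H$ and carries volume $t^d\ell_d(K)\geq \kappa_d(\rho/(D+\rho))^d r^d$. The identification $K=\{(1-s)x+sw: w\in B^d(z_0,\rho),\,s\in[0,1]\}$ (needed to bound $\|y-x\|\leq D+\rho$ for $y\in K$) is valid because $B^d(z_0,\rho)$ is itself convex, and your remark about why one must scale toward $x$ rather than toward $z_0$ is exactly the right observation to make the bound uniform over $x\in H$. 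Nothing is missing.
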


\begin{proof}[Proof of Theorem \ref{thm:knearest}]
We aim at applying Proposition \ref{introex} with $p_1=p_2=1$. Note that, for $x\in H$ and $0<r\leq \max_{z_1,z_2\in H}\|z_1-z_2\|$,
$$
\BP(\eta_t(B^d(x,r))<k) \leq \sum_{i=0}^{k-1}\frac{t^i \kappa_d^i r^{id}}{i!} \exp(-tc_Hr^d) \leq C \exp(-tcr^d)
$$
with suitable constants $C,c>0$, where we have used Lemma \ref{lem:Appendix} in the first inequality.
For $x,y\in H$ this implies that
\begin{equation}\label{eq:Pedge}
\BP(y\in N(x,\eta_t+\delta_y) \text{ or } x\in N(y,\eta_t+\delta_x)) \leq \tilde{C} \exp(-t\tilde{c}\|x-y\|^d)
\end{equation}
with suitable constants $\tilde{C},\tilde{c}>0$.

For $x_1,x_2\in H$ we put $r=\|x_1-x_2\|$. If we assume that
\begin{itemize}
\item [(A)] $\eta_t(B^d(x_1,r/8)\cap H)\geq k+1$,
\item [(B)] $y\in B^d(x_1,r/8)\cap H$ for all $y\in\eta_t$ with $x_1\in N(y,\eta_t+\delta_{x_1})$,
\end{itemize}
we have $R(x_1,\eta_t)\leq r/4$. Under the additional assumption
\begin{itemize}
\item [(C)] $x_1\notin N(x_2,\eta_t+\delta_{x_1})$
\end{itemize}
we see that $R(x_1,\eta_t+\delta_{x_2})=R(x_1,\eta_t)$. Consequently, Lemma \ref{lem:AddOneNNG} implies that $D^2_{x_1,x_2}F_t=0$ if the conditions (A)-(C) are satisfied. Obviously, we have
$$
\BP(x_1\in N(x_2,\eta_t+\delta_{x_1})) \leq \tilde{C} \exp(-t \tilde{c} r^d)
$$
and
$$
\BP(\eta_t(B^d(x_1,r/8)\cap H)< k+1) \leq \overline{C} \exp(-t \overline{c} r^d)
$$
with suitable constants $\overline{C},\overline{c}>0$. Using the Mecke formula, \eqref{eq:Pedge} and spherical coordinates, we see that
\begin{align*}
 \BP(\exists y\in \eta_t \setminus B^d(x_1,r/8): x_1\in N(y,\eta_t+\delta_{x_1}))  & \leq \BE \sum_{y\in\eta_t\setminus B^d(x_1,r/8)} \I(x_1\in N(y,\eta_t+\delta_{x_1}))\\
 & \hskip -3cm \leq t \int_{\R^d \setminus B^d(x_1,r/8)} \BP(x_1\in N(y,\eta_t+\delta_{x_1})) \, \dint y\\
 & \hskip -3cm \leq t \int_{\R^d \setminus B^d(x_1,r/8)} \tilde{C} \exp(-t \tilde{c} \|y-x_1\|^d) \, \dint y \leq \hat{C} \exp(-t\hat{c} r^d)
\end{align*}
with suitable constants $\hat{C},\hat{c}>0$. Altogether, we see that there are constants $C^*,c^*>0$ such that
$$
\BP(D^2_{x_1,x_2}F_t\neq 0) \leq C^* \exp(-tc^* \|x_1-x_2\|^d).
$$
Combining this with spherical coordinates shows that
$$
\sup_{x\in H,\ t\geq 1} t \int \BP(D^2_{x,y}F_t\neq 0)^{1/20} \, \dint y <\infty.
$$

Let $M_1(x,\eta_t)$ be the length of the longest edge that is generated by adding the point $x$ and let $M_2(x,\eta_t)$ be the length of the longest edge that is removed by adding the point $x$. It follows from the Mecke formula and \eqref{eq:Pedge} that
\begin{align*}
\BP(M_1(x,\eta_t)\geq s) & \leq \BE \sum_{y\in\eta_t\setminus B^d(x,s)} \I\{y\in N(x,\eta_t) \text{ or } x\in N(y,\eta_t+\delta_x)\}\\
& \leq \tilde{C} t \int_{\R^d\setminus B^d(x,s)} \exp(-t\tilde{c}\|x-y\|^d) \, \dint y \leq \tilde{C}_1 \exp(-t \tilde{c}_1 s^d)
\end{align*}
and
\begin{align*}
& \BP(M_2(x,\eta_t)\geq s)\\
& \leq \BE \sum_{(y_1,y_2)\in\eta^2_{t,\neq}} \I\{\|y_1-y_2\|\geq s,y_2\in N(y_1,\eta_t), x\in N(y_1,\eta_t+\delta_{x}) \} \allowdisplaybreaks\\
& \leq t^2 \int \I\{\|y_1-y_2\|\geq s\} \BP(x\in N(y_1,\eta_t+\delta_x),y_2\in N(y_1,\eta_t+\delta_{y_2})) \, \dint(y_1,y_2) \allowdisplaybreaks\\
& \leq t^2 \int \I\{\|y_1-y_2\|\geq s\} \BP(x\in N(y_1,\eta_t+\delta_x))^{1/2} \BP(y_2\in N(y_1,\eta_t+\delta_{y_2}))^{1/2} \, \dint(y_1,y_2) \allowdisplaybreaks\\
& \leq t^2 \tilde{C}^2 \int  \int_{\R^d\setminus B^d(y_1,s)} \exp(-t \tilde{c} \|y_1-y_2\|^d/2) \, \dint y_2 \, \exp(-t\tilde{c}\|y_1-x\|^d/2) \, \dint y_1\\
& \leq \tilde{C}_2 \exp(-t \tilde{c}_2 s^d)
\end{align*}
with suitable constants $\tilde{C}_1,\tilde{c}_1,\tilde{C}_2,\tilde{c}_2>0$. In a similar way we obtain that
$$
\BP(M_1(x_1,\eta_t+\delta_{x_2})\geq s) \leq \tilde{C}_3 \exp(-t\tilde{c}_3 s^d) \ \text{ and } \ \BP(M_2(x_1,\eta_t+\delta_{x_2})\geq s) \leq \tilde{C}_4 \exp(-t\tilde{c}_4 s^d)
$$
for all $x_1,x_2\in H$ with constants $\tilde{C}_3,\tilde{c}_3,\tilde{C}_4,\tilde{c}_4>0$. Since at most $D_{d,k}$ edges are generated and removed by  adding the point $x\in H$, we have
$$
|D_xF_t|\leq  D_{d,k} t^{\alpha/d} \max\{M_1(x,\eta_t),M_2(x,\eta_t)\}^{\alpha}
$$
and, for $x_1,x_2\in H$,
$$
|D^2_{x_1,x_2}F_t| \leq 2 D_{d,k} t^{\alpha/d} \max\{M_1(x_1,\eta_t),M_2(x_1,\eta_t),M_1(x_1,\eta_t+\delta_{x_2}),M_2(x_1,\eta_t+\delta_{x_2})\}^{\alpha}.
$$
Because of the exponential tail probabilities for the expressions in the maxima, there are constants $c_1$ and $c_2$ such that
$$
\BE |D_xF_t|^5 \leq c_1, \quad x\in H, \quad \text{and} \quad \BE|D^2_{x_1,x_2}F_t|^5 \leq c_2, \quad x_1,x_2\in H.
$$
Now Proposition \ref{introex} concludes the proof.
\end{proof}

\subsection{Poisson-Voronoi tessellation}\label{ss:pvt}

Let $\eta_t$ be a stationary Poisson process in $\R^d$ whose intensity
measure is $t$ times the Lebesgue measure. Now we can divide the whole $\R^d$ into cells
$$
C(x,\eta_t)=\{y\in\R^d: \|x-y\|\leq \|z-y\|, z\in\eta_t\}, \quad x\in\eta_t,
$$
i.e.\ the cell with nucleus $x$ contains all points of $\R^d$ such that $x$
is the closest point of $\eta_t$. The collection of all these cells is called
{\em Poisson-Voronoi tessellation}. All its cells are (almost surely) bounded polytopes,
and we let $X^k_t$, $k\in\{0,\ldots,d\}$,
denote the system of all $k$-faces  of these polytopes.
For an introduction to some fundamental mathematical properties
of such tessellations, as well as for relevant definitions, see \cite[Chapter 10]{SW08}.

Let $H$ be a compact convex set with non-empty interior. We are interested in the normal approximation of the Poisson functionals
\begin{align*}
V^{(k,i)}_t:=\sum_{G\in X^k_t}V_i(G\cap H),
\end{align*}
where $k\in\{0,\ldots,d\}$, $i\in\{0,\hdots,\min\{k,d-1\}\}$ and $V_i(\cdot)$ is the $i$-th {\em intrinsic volume}
(see \cite{SW08}).
In particular $V^{(d-1,d-1)}_t$ is the total surface content (edge length in case
$d=2$) of all cells within $H$, while $V^{(k,0)}_t$ is the total
number of all $k$-faces intersecting $H$. We do not allow $k=i=d$ since $V_t^{(d,d)}=\ell_d(H)$ is constant.

Central limit theorems for the functionals $V^{(k,i)}_t$ are implied
by the mixing properties of the Poisson Voronoi tessellation
derived by Heinrich in \cite{Heinrich1994}. The Voronoi tessellation within the observation window can be also constructed with respect to a finite Poisson process in the observation and not with respect to a stationary Poisson process. For this slightly different situation, which has the same asymptotic behaviour as the setting described above, central limit theorems were derived by stabilization techniques in \cite{AvramBertsimas1993,BaryshnikovYukich2005,Penrose07,PenroseYukich2001,PenroseYukich2005}. Quantitative bounds on the
Kolmogorov distance for the edge length in the planar case were proved by Avram and Bertsimas \cite{AvramBertsimas1993}
and improved by Penrose and Yukich in \cite{PenroseYukich2005}. These bounds of the orders of $(\log t)^{1+3/4}t^{-1/4}$ and $(\log t)^{3d}t^{-1/2}$, respectively, can be further improved as the following theorem shows.
%Quantitative bounds on the
%Kolmogorov distance for the edge length in the planar
%case were proved by Avram and Bertsimas \cite{AvramBertsimas1993}
%and improved by Penrose and Yukich in \cite{PenroseYukich2005}.
%Our Theorem \ref{thm:PVTessellation} below
%yields the presumably optimal rates for an arbitrary choice of $(k,i)$.
%In the stabilization literature \cite{AvramBertsimas1993,PenroseYukich2001,PenroseYukich2005,Penrose07}
%the Voronoi tessellation is defined on a finite Poisson process and
%not as the restriction of an infinite Voronoi tessellation, as in our case.
%The difference, however, is aysymptotically negligible.

\begin{theorem}\label{thm:PVTessellation}
Let $N$ be a standard Gaussian random variable. Then there are constants
$c_{i,k}$, $k\in\{0,\hdots,d\}$, $i\in\{0,\hdots,\min\{k,d-1\}\}$, such that
$$
d_K\bigg(\frac{V^{(k,i)}_t-\BE V^{(k,i)}_t}{\sqrt{\BV V^{(k,i)}_t}} ,N\bigg)
\leq c_{k,i} t^{-1/2}, \quad t\geq 1.
$$
\end{theorem}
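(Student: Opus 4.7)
The plan is to follow exactly the template established in the proof of Theorem \ref{thm:knearest} and apply Proposition \ref{introex} to the rescaled functional $F_t := t^{i/d} V^{(k,i)}_t$ (the rescaling is chosen so that $\BV F_t$ is of order $t$ rather than $t^{1-2i/d}$). Fix $p_1 = p_2 = 1$ and verify: (a) uniform fifth moment bounds $\BE |D_x F_t|^5 \leq c_1$ and $\BE |D^2_{x_1,x_2} F_t|^5 \leq c_2$; (b) the stabilization integral $m$ in \eqref{e:i3} is finite; (c) $\BV F_t \geq v t$ for $t \geq 1$. Theorem \ref{thm:PVTessellation} will then follow immediately from Proposition \ref{introex}.

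The key geometric input is a radius of stabilization $R(x,\eta_t)$ with the property that adding $x$ does not modify any Voronoi cell whose nucleus lies outside $B^d(x, R(x,\eta_t))$; a standard choice (see \cite{BaryshnikovYukich2005, PenroseYukich2001}) is the smallest $r>0$ such that each of a fixed finite collection of conical sectors of radius $r$ centred at $x$ contains at least one point of $\eta_t$. A Poisson volume computation, together with Lemma \ref{lem:Appendix}, gives the exponential tail bound
\[
\BP(R(x,\eta_t) > r) \leq C_1 \exp(-c_1 t r^d), \qquad x \in \R^d, \ r > 0,
\]
uniformly in $x$ and (by a monotonicity argument) uniformly after adding one further Dirac mass. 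Since inserting a point alters only the faces of cells with nuclei in $B^d(x, R(x,\eta_t))$, and each affected $i$-th intrinsic volume is bounded by a polynomial in $R(x,\eta_t)$ times $t^{-i/d}$, the exponential tails translate, via the Mecke formula, into the uniform fifth moment bounds required in (a).

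For the stabilization hypothesis (b), the crucial observation, already exploited in the nearest-neighbour case, is that $D^2_{x_1,x_2} F_t = 0$ on the event $\{x_2 \notin B^d(x_1, R(x_1, \eta_t))\} \cap \{R(x_1, \eta_t + \delta_{x_2}) = R(x_1, \eta_t)\}$. Arguing as in the proof of Theorem \ref{thm:knearest}, at scale $r := \|x_1-x_2\|/8$ (splitting according to whether $\eta_t$ has at least a suitable number of points in $B^d(x_1, r)$ and whether any of them has $x_1$ as a near neighbour), one obtains
\[
\BP(D^2_{x_1,x_2} F_t \neq 0) \leq C \exp(-c t \|x_1 - x_2\|^d),
\]
and spherical coordinates then give $\sup_{x,\ t\geq 1} t \int \BP(D^2_{x,y} F_t \neq 0)^{1/20} \, \dint y < \infty$, which is \eqref{e:i3}.

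For the variance lower bound (c) I would apply Corollary \ref{corol:VarianceEuclidean} in the manner of Lemma \ref{lem:VarianceNNG}. The main task, and the step I expect to be the chief obstacle, is to exhibit a deterministic cluster $\hat{x}_1, \ldots, \hat{x}_m$ of points in $\interior(H)$, separated from $\partial H$ by a definite distance, together with a fixed ball $B$ containing the cluster, such that on the event $\{\eta_1(B) = 0\}$ the two configurations $\eta_1 + \sum_{j=1}^m \delta_{\hat{x}_j}$ and $\eta_1 + \sum_{j=2}^m \delta_{\hat{x}_j}$ produce contributions to $V^{(k,i)}_1$ that differ by a strictly positive deterministic amount. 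Since the relevant change of the tessellation is localized in $B$ when the radii of stabilization for $\hat{x}_1$ do not reach $\partial B$, such a cluster can be built explicitly by placing points in sufficiently general position so that adding $\hat{x}_1$ creates or destroys at least one $k$-face of strictly positive $i$-th intrinsic volume (for $i < k$ one uses a generic configuration; for $i = k < d$ one uses a slight perturbation that forces a face of positive $k$-volume). By translation invariance of $\eta_t$ and the homogeneity $g_t(\cdot + t^{-1/d}\,\cdot) = g_1(\cdot)$ that comes from the Poisson scaling, the hypotheses of Corollary \ref{corol:VarianceEuclidean} are satisfied, yielding $\BV F_t \geq \sigma t$ and completing the proof.
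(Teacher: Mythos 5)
Your overall strategy mirrors the paper's, but there is a genuine obstacle in invoking Proposition~\ref{introex}: that proposition requires $\lambda$ to be a fixed \emph{finite} measure on $\BX$ (the constant $C$ there explicitly depends on $\lambda(\BX)$). In the Poisson--Voronoi setting the underlying point process is a \emph{stationary} Poisson process on all of $\R^d$, not its restriction to $H$, so $\lambda = \ell_d$ and $\lambda(\BX)=\infty$. This is not a formality: the Kolmogorov bound in Theorem~\ref{thm:Stabilizing} involves quantities such as $\Gamma_F = \int \BP(D_xF\neq 0)^{p_1/(8+2p_1)}\,\lambda(\dint x)$ and $\int\big(\int\BP(D^2_{x_1,x_2}F\neq 0)^{p_2/(16+4p_2)}\lambda(\dint x_2)\big)^2\lambda(\dint x_1)$, and with $\lambda=\ell_d$ on $\R^d$ finiteness of these integrals at the right order in $t$ must be \emph{proved}, not inherited from the total mass of $\lambda$. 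This is precisely why the paper applies Theorem~\ref{thm:Stabilizing} directly rather than Proposition~\ref{introex}, and why it needs the additional estimate of Lemma~\ref{lem:BoundsVoronoi}, namely $\BP(C(x,\eta_t+\delta_x)\cap H\neq\emptyset)\leq\tilde C_2\exp(-t\tilde c_2\,d(x,H)^d)$: since $D_x\tilde V_t^{(k,i)}=0$ whenever the cell around $x$ misses $H$, and $D^2_{x_1,x_2}\tilde V_t^{(k,i)}=0$ whenever the cells around $x_1$ and $x_2$ are disjoint or the cell around $x_1$ misses $H$, one can integrate the exponential tails over $\R^d$ and verify by hand that $\Gamma_F$ and the other integrals are of order $t$. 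Your proposal omits this entire step, and without it the application of Proposition~\ref{introex} is invalid.

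A second, smaller issue is your choice of stabilization radius. You propose the ``conical sector'' radius of stabilization from \cite{BaryshnikovYukich2005,PenroseYukich2001} and then follow the $k$-nearest-neighbour strategy of splitting at scale $\|x_1-x_2\|/8$. The paper instead works with the circumradius $R(x,\eta_t)$ of the Voronoi cell $C(x,\eta_t+\delta_x)$ and exploits the structural facts that (i) all cells adjacent to $C(x,\eta_t+\delta_x)$ have nuclei in $B^d(x,2R(x,\eta_t))$, (ii) normality of the tessellation bounds the number of affected $k$-faces by $\eta_t(B^d(x,2R(x,\eta_t)))^{d+1-k}$, and (iii) $R$ is monotone under adding points. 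These give the uniform moment bounds directly and also yield the key vanishing events for $D^2$ cited above. Your sector-based radius may well give an equivalent exponential tail, but the split-at-scale argument modelled on the nearest-neighbour case does not transfer verbatim; the Voronoi argument really hinges on cell disjointness, which your sketch does not establish. The variance lower bound via Corollary~\ref{corol:VarianceEuclidean} and the localized perturbation argument is essentially correct and matches Lemma~\ref{lem:VariancePV}.
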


Let $k\in\{0,\hdots,d\}$ and $i\in\{0,\hdots,\min\{k,d-1\}\}$ be fixed in the following. In order to prove the previous theorem we consider the Poisson functionals
$\tilde{V}_t^{(k,i)}=t^{i/d} V_t^{(k,i)}$ and denote by $\tilde{v}_t^{(k,i)}:\bN_{\R^d}\to\R$
a representative of $\tilde{V}_t^{(k,i)}$. We start by proving the following
lemma for the variance. More details on the asymptotic covariance structure
of these random variables are provided in the recent preprint
\cite{LaOchs13}.

\iffalse
Let $\eta_t$ be a stationary Poisson process in $\R^d$ whose intensity measure is $t$ times the Lebesgue measure. Now we can divide the whole $\R^d$ into cells
$$
C(x,\eta_t)=\{y\in\R^d: \|x-y\|\leq \|z-y\|, z\in\eta_t\}, \quad x\in\eta_t,
$$
i.e.\ the cell with nucleus $x$ contains all points of $\R^d$ such that $x$ is the closest point of $\eta_t$. This tessellation is called Poisson-Voronoi tessellation. All its cells are polytopes, and we denote the $k$-faces of the polytopes as $k$-faces of the Poisson-Voronoi tessellation. In the sequel we investigate the $k$-faces within $H$. By $V^{(k,i)}_t$ we denote the sum of the intrinsic volumes of the $k$-faces intersected with $H$.

\begin{theorem}\label{thm:PVTessellation}
Let $N$ be a standard Gaussian random variable. Then there are constants $c_{i,k}$, $k\in\{0,\hdots,d-1\}$, $i\in\{0,\hdots,k\}$, such that
$$
d_K\bigg(\frac{V^{(k,i)}_t-\BE V^{(k,i)}_t}{\sqrt{\BV V^{(k,i)}_t}} ,N\bigg) \leq c_{i,k} t^{-1/2}, \quad t\geq 1.
$$
\end{theorem}

In order to prove the previous theorem we consider the Poisson functionals $\tilde{V}_t^{(k,i)}=t^{i/d} V_t^{(k,i)}$ and denote by $\tilde{v}_t^{(k,i)}:\bN_{\R^d}\to\R$ a representative of $\tilde{V}_t^{(k,i)}$. We start with proving the following lemma for the variance.

\fi

\begin{lemma}\label{lem:VariancePV}
There are constants $\sigma_{k,i}>0$, $k\in\{0,\hdots,d\}$, $i\in\{0,\hdots,\min\{k,d-1\}\}$, such that
$$
\BV V_t^{(k,i)} \geq \sigma_{k,i} t^{1-2i/d}, \quad t\geq 1.
$$
\end{lemma}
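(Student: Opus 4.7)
The plan is to apply an appropriately adapted version of Corollary \ref{corol:VarianceEuclidean} to the rescaled functional $\tilde{V}_t^{(k,i)} = t^{i/d} V_t^{(k,i)}$, for which $\BV \tilde{V}_t^{(k,i)} = t^{2i/d} \BV V_t^{(k,i)}$; a uniform lower bound $\BV \tilde{V}_t^{(k,i)} \geq \sigma t$ for $t\geq 1$ is then equivalent to the assertion. The scaling property of the stationary Poisson process (under which $t^{-1/d}\eta_1$ has the same distribution as $\eta_t$) together with the $i$-homogeneity of the intrinsic volume $V_i$ ensure that the $t^{i/d}$ normalisation exactly compensates for the rescaling of intrinsic volumes, so that the translation-plus-scaling hypothesis of Corollary \ref{corol:VarianceEuclidean} is automatic once $\hat{x}_1,\ldots,\hat{x}_m$ sit sufficiently far from $\partial H$.

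The real work is to exhibit a configuration $(\hat{x}_1,\ldots,\hat{x}_m)\in\interior(H)^m$, together with subsets $I_1,I_2\subset\{1,\ldots,m\}$ with $I_1\cup I_2=\{1,\ldots,m\}$, at which the associated function $g_1$ is continuous and strictly positive. Following the strategy in the proof of Lemma \ref{lem:VarianceNNG}, I would take $m=d+2$, fix an interior point $x_0\in\interior(H)$, and choose a small parameter $\tau>0$ with $B^d(x_0,C\tau)\subset\interior(H)$ for a large constant $C$. Set $\hat{x}_1:=x_0$, let $\hat{x}_2,\ldots,\hat{x}_{d+2}$ be $d+1$ points in general position on the sphere $\partial B^d(x_0,\tau/2)$ chosen so that $x_0$ is their common circumcentre (and hence their unique Voronoi vertex), and take $I_1:=\{1,\ldots,d+2\}$, $I_2:=\{2,\ldots,d+2\}$. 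On the isolation event $E_\tau:=\{\eta_1(B^d(x_0,C\tau))=0\}$ of probability $\exp(-\kappa_d(C\tau)^d)$, the local Voronoi tessellations inside $B^d(x_0,C\tau/2)$ generated by $\eta_1+\sum_{j\in I_1}\delta_{\hat{x}_j}$ and $\eta_1+\sum_{j\in I_2}\delta_{\hat{x}_j}$ are fully determined by the inserted points, so the difference in $\tilde v_1^{(k,i)}$ equals a deterministic quantity of the form $\tau^i\Delta$ (simply $\Delta$ when $i=0$), where $\Delta=\Delta(d,k,i)$ depends only on the angular positions on the sphere. On the complement $E_\tau^c$ the absolute value of the same difference is almost surely bounded by $C_1\tau^i$, since all affected $k$-faces lie in $B^d(x_0,C\tau)$ and therefore have diameter $O(\tau)$, while their number is controlled by the Voronoi degree (which has finite moments for stationary Poisson input).

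Combining these estimates with $\BP(E_\tau^c)=O(\tau^d)$ and the constraint $i\leq d-1<d$, for sufficiently small $\tau$ the expected difference satisfies
\[
|g_1(\hat{x}_1,\ldots,\hat{x}_{d+2})| \;\geq\; \BP(E_\tau)\,|\Delta|\,\tau^i \,-\, \BP(E_\tau^c)\,C_1\,\tau^i \;>\;0,
\]
as soon as $\Delta\neq 0$. Continuity of $g_1$ at the chosen configuration follows from the general position of the $\hat{x}_j$ (all pairwise distances distinct) so that small perturbations do not alter the combinatorial type of the local Voronoi diagram. The main obstacle is verifying $\Delta\neq 0$: for $i=0$, it amounts to a discrete count of $k$-faces of the two local Voronoi diagrams, which differ by a nonzero integer for a generic configuration (since generic $d+2$ points do not produce accidental face coincidences); for $i\geq 1$, $\Delta$ is a piecewise-polynomial function of the angular positions on $\partial B^d(x_0,\tau/2)$, whose zero locus is a proper real-analytic subvariety of the configuration space, so an arbitrarily small generic perturbation of the chosen positions yields $\Delta\neq 0$ while preserving continuity and the isolation argument.
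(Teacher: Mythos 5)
Your overall architecture mirrors the paper's: rescale to $\tilde{V}_t^{(k,i)} = t^{i/d}V_t^{(k,i)}$, observe that stationarity, the scaling $\eta_t\overset{d}{=}t^{-1/d}\eta_1$, and $i$-homogeneity of $V_i$ convert the desired bound into $\BV\tilde V_t^{(k,i)}\geq\sigma t$, and then invoke Corollary~\ref{corol:VarianceEuclidean} at a suitable configuration $(\hat x_1,\dots,\hat x_m)\in\interior(H)^m$ where the associated function $g_1$ is continuous and strictly positive. That much is fine and matches the paper's plan.

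The mechanism by which you try to certify $g_1(\hat x_1,\dots,\hat x_m)>0$ is, however, where a genuine gap appears. You take $d+1$ points on a sphere about $x_0$ with $x_0$ as their common circumcentre, condition on the isolation event $E_\tau$, and reduce the problem to showing a deterministic local quantity $\Delta=\Delta(d,k,i)$ is nonzero. You then dispatch $\Delta\neq 0$ by generic-perturbation heuristics: for $i=0$ you assert the $k$-face count must change by a nonzero integer ``for a generic configuration'', and for $i\geq 1$ you invoke that the zero locus of a piecewise real-analytic function is a proper subvariety, so a small perturbation works. Both steps are circular in the same way: the argument that ``the zero locus is a proper subvariety'' presupposes $\Delta\not\equiv 0$, which is exactly the assertion at stake, and the integer-count claim for $i=0$ is stated, not verified, and would need a case-by-case check over $k\in\{0,\dots,d\}$. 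Since the whole variance lower bound collapses if $\Delta\equiv 0$ for some admissible $(k,i)$, this is not a cosmetic omission.

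The paper avoids this issue with a different, more robust positivity argument. Instead of a fixed circumsphere configuration and an isolation event, it surrounds $\hat x_1$ by a finite cloud of points so that the cell of $\hat x_1$ is \emph{a priori} bounded inside $H$, and then moves the extra point $\hat x_{l+1}=\hat x_1 + rw$ continuously toward $\hat x_1$. As $r\to 0$, adding $\hat x_{l+1}$ splits $\hat x_1$'s cell by a hyperplane through $\hat x_1$, producing new $k$-faces whose intrinsic volumes converge to strictly positive limits, while the changes to the pre-existing faces vanish; this yields an almost sure positive limit, and dominated convergence (using that the number of affected faces and their circumradii have all moments for Poisson input) transfers it to the expectation. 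This argument makes the positivity structural rather than generic, and it does so uniformly in $(k,i)$ because new faces always appear with positive intrinsic volume, including the count $V_0$, and the split of a convex cell/face can only increase the lower-order intrinsic volumes. If you want to keep your isolation-event route, you would need to explicitly exhibit a configuration and verify $\Delta\neq 0$ (for instance, by computing the change in each $V^{(k,i)}$ when a single point is inserted at the centre of a regular simplex with no other Poisson points nearby); without that, the proposal does not establish the lemma.

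Two minor points: your choice $I_1=\{1,\dots,d+2\}$, $I_2=\{2,\dots,d+2\}$ compares ``with $x_0$'' versus ``without $x_0$'', which after re-indexing is the same comparison the paper makes, so that's fine; and your moment/degree argument to control the $E_\tau^c$ contribution (Cauchy--Schwarz with $\BP(E_\tau^c)=O(\tau^d)$) is correct in outline, though one should note that the bound on the number of affected faces uses that the Voronoi cell of $x_0$ remains inside $B^d(x_0, d\tau/4)$ on and off $E_\tau$ because the $\hat x_j$ alone pin it down, which you should state explicitly.
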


\begin{proof}
Let $\hat{x}_1$ be in the interior of $H$ and let $\varepsilon=\inf_{y\in \partial H} \|\hat{x}_1-y\|$ and $H_{\varepsilon}=\{x\in H: \inf_{y\in\partial H}\|y-x\|\geq\varepsilon/2\}$. Now we choose $l\in\N$ and points $\hat{x}_2,\hdots,\hat{x}_l\in B^{d}(\hat{x}_1,\varepsilon/2)$ such that
$$
\sup_{y\in \partial B^{d}(\hat{x}_1,\varepsilon/2)} \min_{i=2,\hdots,l} \|y-\hat{x}_i\|< \frac{\varepsilon}{4}
$$
and $\hat{x}_1,\hdots,\hat{x}_l$ are in general position (see \cite[p.\ 472]{SW08}).
Now we can choose a point $\hat{x}_{l+1}\in B^d(\hat{x}_1,\varepsilon/2)$ such that $\hat{x}_1,\hdots,\hat{x}_{l+1}$ are still in general position and such that
\begin{equation}\label{eq:Conditiongpositive}
\BE[\tilde{v}_1^{(k,i)}(\eta_1+\sum_{i=1}^{l+1} \delta_{\hat{x}_i})-\tilde{v}_1^{(k,i)}(\eta_1+\sum_{i=1}^{l} \delta_{\hat{x}_i})]>0,
\end{equation}
as can be seen from the following argument. For an arbitrary $w\in\R^d$ with $\|w\|=1$ we have almost surely that
$$
\lim_{r\to 0}\tilde{v}^{(k,i)}_1(\eta_1+\sum_{i=1}^l \delta_{\hat{x}_i}+\delta_{\hat{x}_1+r w})-\tilde{v}^{(k,i)}_1(\eta_1+\sum_{i=1}^l \delta_{\hat{x}_i})>0.
$$
This is the case since adding $\hat{x}_1+r w$ for sufficiently small $r$ means that we split the cell around $\hat{x}_1$ in two cells which generates new faces, whereas the old faces are slightly moved (here we use the fact that the points of $\eta_1$ and the additional points are in general position almost surely). Now the dominated convergence theorem implies the same for the expectations. Putting $\hat{x}_{l+1}=\hat{x}_1+r w$ with $w$ such that $\hat{x}_1,\hdots,\hat{x}_{l+1}$ are in general position and $r$ sufficiently small yields \eqref{eq:Conditiongpositive}. We define $g_t: H^{l+1}\to\R$, $t\geq 1$, by
$$
g_t(x_1,\hdots,x_{l+1}):= \BE[\tilde{v}_t^{(k,i)}(\eta_t+\sum_{i=1}^{l+1} \delta_{x_i})-\tilde{v}_t^{(k,i)}(\eta_t+\sum_{i=1}^l \delta_{x_i})].
$$
By \eqref{eq:Conditiongpositive} we have that $g_1(\hat{x}_1,\hdots,\hat{x}_{l+1})>0$. Since the points $\hat{x}_1,\hdots,\hat{x}_{l+1}$ are by choice in general position, $g_1$ is continuous in $(\hat{x}_1,\hdots,\hat{x}_{l+1})$.

For $y_2,\hdots,y_{l+1}\in B^d_{\varepsilon/4}$, $z\in H_{\varepsilon}-\hat{x}_1$ and $t\geq 1$ we have that
\begin{align*}
& g_t(\hat{x}_1+z,\hat{x}_1+z+t^{-1/d}(\hat{x}_2+y_2-\hat{x}_1),\hdots,\hat{x}_1+z+t^{-1/d}(\hat{x}_{l+1}+y_{l+1}-\hat{x}_1))\\
& =g_1(\hat{x}_1,\hat{x}_2+y_2,\hdots,\hat{x}_{l+1}+y_{l+1}),
\end{align*}
which follows from the stationarity of $\eta_t$, the scaling property $\eta_t\overset{d}{=}t^{-1/d} \eta_1$ and the $i$-homogeneity of the $i$-th intrinsic volume. Moreover, we have used that on both sides the cell around $\hat{x}_1+z$ is included in $H$, which is a consequence of the construction of $\hat{x}_1,\hdots,\hat{x}_{l+1}$ and of the choice of $y_2,\hdots,y_{l+1}$ and $z$. Now Corollary \ref{corol:VarianceEuclidean} yields the assertion.
\end{proof}

For $\mu\in\bN_{\R^d}$ and $x\in\R^d$ we denote by $R(x,\mu)$ the maximal distance of a vertex of $C(x,\mu+\delta_x)$ to $x$. We define $d(x,A)=\inf_{z\in A}\|x-z\|$ for $x\in\R^d$ and $A\subset\R^d$.

\begin{lemma}\label{lem:BoundsVoronoi}
There are constants $\tilde{C_1},\tilde{C}_2,\tilde{C}_3,\tilde{c}_1,\tilde{c}_2,\tilde{c}_3>0$ such that
\begin{equation}\label{eq:BoundPCircumradius}
\BP(R(x,\eta_t) \geq s) \leq \tilde{C}_1 \exp(-t\tilde{c}_1 s^d), \quad s\geq 0, \quad x\in\R^d, \quad t\geq 1,
\end{equation}
\begin{equation}\label{eq:BoundDistanceW}
\BP(C(x,\eta_t+\delta_x)\cap H \neq \emptyset) \leq \tilde{C}_2 \exp(-t \tilde{c}_2 d(x,H)^d), \quad x\in \R^d, \quad t\geq 1,
\end{equation}
and
$$
\BP(C(x_1,\eta_t+\delta_{x_1})\cap C(x_2,\eta_t+\delta_{x_2}) \neq \emptyset) \leq \tilde{C}_3 \exp(-t \tilde{c}_3 \|x_1-x_2\|^d), \quad x_1,x_2\in\R^d, \quad t\geq 1.
$$
\end{lemma}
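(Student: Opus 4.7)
I would prove \eqref{eq:BoundPCircumradius} by a standard cone-covering argument, and then derive \eqref{eq:BoundDistanceW} and the third inequality as immediate consequences.

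For \eqref{eq:BoundPCircumradius}, by stationarity of $\eta_t$ the distribution of $R(x,\eta_t)$ does not depend on $x$, so it suffices to treat $x=0$. I would fix, once and for all, a finite collection of unit vectors $\theta_1,\dots,\theta_M$ of narrow aperture (with $M = M(d)$ depending only on the dimension), defining the cones $K_i := \{r\theta : r>0,\ \langle \theta,\theta_i\rangle > \beta\}$ for $\beta$ close enough to $1$, such that the $K_i$ cover $\R^d\setminus\{0\}$ and any two unit vectors lying in the same $K_i$ have inner product exceeding $1/4$. On the event $\{R(0,\eta_t) \geq s\}$, convexity of the cell together with $0 \in C(0,\eta_t+\delta_0)$ yields a point $y \in C(0,\eta_t+\delta_0)$ with $\|y\| = s$. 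Choosing $i$ so that $y \in K_i$, any putative $z = \rho\theta' \in \eta_t \cap K_i \cap B^d_{s/2}$ would satisfy
\begin{align*}
\|z-y\|^2 = s^2 - 2s\rho\,\langle y/s,\theta'\rangle + \rho^2 < s^2 - s\rho/2 + \rho^2 \leq s^2,
\end{align*}
contradicting $y \in C(0,\eta_t+\delta_0)$. Hence $K_i \cap B^d_{s/2}$ must be void of $\eta_t$-points for at least one $i$, and the Poisson void formula combined with a union bound yields
$$\BP(R(0,\eta_t) \geq s) \leq M \exp(-t\,c\,s^d),$$
with $c := \min_i \ell_d(K_i \cap B^d_{1/2})>0$.

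For \eqref{eq:BoundDistanceW}, any $y \in C(x,\eta_t+\delta_x)\cap H$ satisfies $\|y-x\| \geq d(x,H)$, and since the cell is a convex polytope whose $x$-diameter is attained at a vertex (a consequence of convexity of $z \mapsto \|z-x\|^2$), one has $\|y-x\|\leq R(x,\eta_t)$. Thus $R(x,\eta_t)\geq d(x,H)$, and the claim follows from \eqref{eq:BoundPCircumradius} upon taking $\tilde{C}_2 \geq 1$ to absorb the trivial case $d(x,H)=0$. For the third inequality, if $y$ lies in both cells then $\|y-x_i\| \leq R(x_i,\eta_t)$ for $i=1,2$, while the triangle inequality forces $\max(\|y-x_1\|,\|y-x_2\|) \geq \|x_1-x_2\|/2$. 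Hence at least one of $R(x_1,\eta_t),R(x_2,\eta_t)$ exceeds $\|x_1-x_2\|/2$, and a union bound combined with \eqref{eq:BoundPCircumradius} gives the result with $\tilde{C}_3 = 2\tilde{C}_1$ and $\tilde{c}_3 = \tilde{c}_1/2^d$.

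The only nontrivial step is the sector argument in \eqref{eq:BoundPCircumradius}: one must verify that the finite collection of cones can be chosen with aperture small enough that the elementary inner-product computation rules out any $\eta_t$-point inside $K_i\cap B^d_{s/2}$. The remaining two inequalities reduce to triangle-inequality manipulations and the Poisson void probability.
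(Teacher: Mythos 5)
Your proof is correct, and for the first inequality it takes a genuinely different route from the paper: there, \eqref{eq:BoundPCircumradius} is obtained by simply invoking Theorem 2 of Hug and Schneider (2007), whereas you give a self-contained cone-covering argument based only on the Poisson void probability and an elementary inner-product estimate. The construction works as claimed: choosing $\beta>\sqrt{5/8}$ in the definition of the cones guarantees that any two unit directions $\theta,\theta'$ with $\langle\theta,\theta_i\rangle>\beta$ and $\langle\theta',\theta_i\rangle>\beta$ satisfy $\langle\theta,\theta'\rangle>2\beta^2-1>1/4$, compactness of $\bS$ yields a finite cover of $\R^d\setminus\{0\}$, and the displayed computation then shows that any $z\in\eta_t\cap K_i\cap B^d_{s/2}$ would be strictly closer to $y$ than the nucleus is, so the event $\{R(0,\eta_t)\geq s\}$ forces one of the $M$ regions $K_i\cap B^d_{s/2}$ to be void, whence the bound $M\exp(-tcs^d)$. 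This trades a few extra lines for independence from the external reference and full transparency. Your derivations of \eqref{eq:BoundDistanceW} and the third estimate are precisely the deductions the paper leaves implicit in the phrase ``the other bounds can be deduced from \eqref{eq:BoundPCircumradius}''; the one point worth stating explicitly is that the inequalities $\|y-x\|\leq R(x,\eta_t)$ and $\|y-x_i\|\leq R(x_i,\eta_t)$ use that the corresponding Voronoi cell is a bounded polytope, so that the convex map $z\mapsto\|z-x\|$ attains its maximum over the cell at a vertex; for the stationary Poisson process this holds $\BP$-almost surely, so this is a cosmetic omission rather than a gap.
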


\begin{proof}
The inequality \eqref{eq:BoundPCircumradius} follows from Theorem 2 in \cite{HugSchneider2007}. The other bounds can be deduced from \eqref{eq:BoundPCircumradius}.
\end{proof}

\begin{proof}[Proof of Theorem \ref{thm:PVTessellation}]
For $x\in\R^d$ and $\mu\in\bN_{\R^d}$ let $A_{x,\mu}=\{y\in\mu: C(x,\mu+\delta_x)\cap C(y,\mu)\neq \emptyset\}$, which is the set of all neighbour points of the cell around $x$. It is easy to see that all these points must be included in $B^d(x,2R(x,\mu))$ so that
$$|A_{x,\mu}|\leq \mu(B^d(x,2R(x,\mu))).$$
By adding the point $x$ to $\mu$, some $k$-faces of the Voronoi tessellation are changed or removed. Since each of these faces is associated with $d+1-k$ neighbours of $x$ (because the tessellation is {\em normal}, see \cite[Theorem 10.2.3]{SW08}), at most $\mu(B^d(x,2R(x,\mu)))^{d+1-k}$ $k$-faces are changed or removed. By the monotonicity of the intrinsic volumes the $i$-th intrinsic volume of each of these $k$-faces is reduced by $V_i(B^d(x,R(x,\mu)))$ at most. On the other hand, adding the point $x$ generates some new $k$-faces. Each of them is associated with $d-k$ neighbours of $x$ and their $i$-th intrinsic volumes are bounded by $V_i(B^d(x,R(x,\mu)))$.
Altogether we see that, for $x\in\R^d$,
\begin{align*}
|D_{x}\tilde{v}_t^{(k,i)}(\mu)| & \leq t^{i/d} V_i(B^d(x,R(x,\mu))) \mu(B^d(x,2R(x,\mu)))^{d+1-k}\\
& = t^{i/d} V_i(B^d(0,1)) R(x,\mu)^{i} \mu(B^d(x,2R(x,\mu)))^{d+1-k}.
\end{align*}
By iterating this argument and using the monotonicity of $R(x_1,\mu)$, we see that, for $x_1,x_2\in\R^d$,
\begin{align*}
|D_{x_1}\tilde{v}_t^{(k,i)}(\mu+\delta_{x_2})| & \leq t^{i/d} V_i(B^d(0,1)) R(x_1,\mu+\delta_{x_2})^{i} (\mu+\delta_{x_2})(B^d(x,2R(x_1,\mu+\delta_{x_2})))^{d+1-k}\\
& \leq t^{i/d} V_i(B^d(0,1)) R(x_1,\mu)^{i} (\mu(B^d(x,2R(x_1,\mu)))+1)^{d+1-k}.
\end{align*}
This implies that
$$
|D^2_{x_1,x_2}\tilde{v}_t^{(k,i)}(\mu)| \leq 2 t^{i/d} V_i(B^d(0,1)) R(x_1,\mu)^{i} (\mu(B^d(x,2R(x_1,\mu)))+1)^{d+1-k}, \quad x_1,x_2\in\R^d.
$$
Together with the stationarity of $\eta_t$, we obtain that the fifth moments of $|D_x\tilde{V}_t^{(k,i)}|$ and $|D_{x_1,x_2}^2\tilde{V}_t^{(k,i)}|$ are bounded by linear combinations of the expectations
$$
\BE t^{5i/d} R(0,\eta_t)^{5i} \sum_{(y_1,\hdots,y_{m})\in\eta_{t,\neq}^m} \I\{y_1,\hdots,y_{m}\in B^d(0,2R(0,\eta_t))\}, \quad m\in\{0,\hdots,5d+5-5k\}.
$$
Using the Mecke formula and the monotonicity of $R(0,\mu)$, we see that the letter expression can be bounded by
\begin{align*}
& t^{m+5i/d} \int_{(\R^d)^m} \BE R(0,\eta_t+\delta_{y_1}+\hdots+\delta_{y_m})^{5i}\\
& \hskip 4cm \I\{y_1,\hdots,y_{m}\in B^d(0,2R(0,\eta_t+\delta_{y_1}+\hdots+\delta_{y_m}))\} \, \dint(y_1,\hdots,y_m)\\
& \leq t^{m+5i/d} \int_{(\R^d)^m} \BE R(0,\eta_t)^{5i} \I\{y_1,\hdots,y_{m}\in B^d(0,2R(0,\eta_t))\} \, \dint(y_1,\hdots,y_m)\\
& = 2^{dm} \kappa_d^m t^{m+5i/d} \BE R(0,\eta_t)^{md+5i}.
\end{align*}
Now \eqref{eq:BoundPCircumradius} yields that the right-hand side is uniformly bounded for $t\geq 1$, whence the fifth absolute moments of the first and the second difference operator are also uniformly bounded for $t\geq 1$.

Let $B_H$ and $R_H$ be the circumball and the circumradius of $H$, respectively. We have that $D_x\tilde{V}_t^{(k,i)}= 0$ if $C(x,\eta_t+\delta_{x})\cap H=\emptyset$ since in this case the tessellation in $H$ is the same for $\eta_t$ and $\eta_t+\delta_x$. Together with \eqref{eq:BoundDistanceW} and spherical coordinates, we see that
\begin{align*}
t \int \BP(D_x\tilde{V}_t^{(k,i)}\neq 0)^{1/20} \, \dint x & \leq t \kappa_d R_H^d + t \int_{\R^d\setminus B_H} \tilde{C}_2^{1/20} \exp(-t \tilde{c}_2 d(x,H)^d/20) \, \dint x \\
& \leq t \kappa_d R_H^d + d \kappa_d t \int_{R_H}^\infty \tilde{C}_2^{1/20} \exp(-t \tilde{c}_2 (r-R_H)^d/20) \, r^{d-1}\, \dint r.
\end{align*}
Here, the right-hand side is bounded by  a constant times $t$ for $t\geq 1$.

Observe that $D^2_{x,y}\tilde{V}_t^{(k,i)}=0$ if $C(x,\eta_t+\delta_{x})\cap H=\emptyset$ or $C(x,\eta_t+\delta_{x})\cap C(y,\eta_t+\delta_{y})=\emptyset$ since in both cases we have $D_y\tilde{v}_t^{(k,i)}(\eta_t)=D_y\tilde{v}_t^{(k,i)}(\eta_t+\delta_x)$. For $x\in\R^d$ combining this with Lemma \ref{lem:BoundsVoronoi} implies that
\begin{align*}
 t \int \BP(D^2_{x,y}\tilde{V}_t^{(k,i)}\neq 0)^{1/20} \, \dint y & \leq \tilde{C}_2^{1/20} t \kappa_d d(x,H)^d \exp(-t\tilde{c}_2 d(x,H)^d/20)\\
 & \quad +\tilde{C}_3^{1/20} t \int_{\R^d\setminus B^d(x,d(x,H))} \!\!\!\! \exp(-t \tilde{c}_3 \|x-y\|^d/20) \, \dint y.
\end{align*}
By using polar coordinates and estimating the first summand, we obtain that there are constants $\hat{C},\hat{c}>0$ such that
$$
t \int \BP(D^2_{x,y}\tilde{V}_t^{(k,i)}\neq 0)^{1/20} \, \dint y \leq \hat{C} \exp(-t \hat{c} d(x,H)^d).
$$
Now a similar calculation as above shows that
$$
t \int \bigg(t \int \BP(D^2_{x_1,x_2}\tilde{V}_t^{(k,i)}\neq 0)^{1/20} \, \dint x_2\bigg)^2 \, \dint x_1 \quad \text{and} \quad
t^2 \int \BP(D^2_{x_1,x_2}\tilde{V}_t^{(k,i)}\neq 0)^{1/20} \, \dint(x_1,x_2)
$$
are of order $t$. Now Theorem \ref{thm:Stabilizing} with $p_1=p_2=1$ and Lemma \ref{lem:VariancePV} conclude the proof.
\end{proof}

\section{Functionals of Poisson shot noise random fields}\label{sec:ShotNoise}

We will now describe a further application of our results, dealing with non-linear functionals of stochastic functions that are obtained as integrals of a deterministic kernel with respect to a Poisson measure. As anticipated in the Introduction, when specialised to the case of moving averages (see Section 8.2) our results provide substantial extensions of the findings contained in \cite{EichelsbacherThaele2013, PSTU10, PeccatiZheng2010}, which only considered linear and quadratic functionals.

\subsection{General results}

In this section we consider non-linear functionals of first order Wiener-It\^o integrals depending on a parameter $t\in \mathbb{Y}$. For this purpose, we fix a measurable space $(\mathbb{X}, \mathcal{X})$, as well as a Poisson measure on $\mathbb{X}$ with $\sigma$-finite intensity $\lambda$. We let $(\mathbb{Y},\mathcal{Y})$ be a measurable space and let $f_t: \BX\to\R$, $t\in\mathbb{Y}$, be a family of functions such that $(x,t)\to f_t(x)$ is jointly measurable and
\begin{equation}\label{eq:Momentsf}
\int |f_t(x)|^p \, \lambda(\dint x)<\infty, \quad p\in\{1,2\}, \quad t\in\mathbb{Y}.
\end{equation}
We consider the random field $(X_t)_{t\in\mathbb{Y}}$ defined by $X_t=I_1(f_t)$, $t\in\mathbb{Y}$, where $I_1$ indicates the Wiener-It\^o integral with respect to $\hat{\eta} = \eta-\lambda$.
Using the pathwise representation
$$
X_t=\int f_t(x) \, \eta(\dint x) -\int f_t(x) \, \lambda(\dint x),
$$
we see that $(\omega,t)\to X_t(\omega)$ can be assumed to be jointly measurable. We are interested in the normal approximation of the random variable
\begin{equation}\label{eq:DefinitionF}
F=\int \varphi(X_t) \, \varrho(\dint t),
\end{equation}
where $\varphi:\R\to\R$ is a twice differentiable function and $\varrho$ is a finite measure on $\mathbb{Y}$. For obvious reasons, we require that $\lambda$, $f_t$ and $\varphi$ are such that $F$ is not almost surely constant.

A random variable of the type \eqref{eq:DefinitionF} is the quintessential example of a non-linear functional of the field $(X_t)_{t\in\BY}$. Such fields are crucial for applications, for instance: when $(\mathbb{X}, \mathcal{X}) =(\mathbb{Y}, \mathcal{Y}) = (\R^d, \mathcal{B}(\R^d)) $, $f_t(x) = e^{{\bf i}\langle t,x \rangle}$ (where $\langle\cdot,\cdot\rangle$ is the Euclidean scalar product and ${\bf i}^2=-1$) and $\eta-\lambda$ is adequately complexified, then the field $(X_t)_{t\in\BY}$ represents the prototypical example of a centred stationary field on $\R^d$ (see e.g. \cite[Section 5.4]{AT}, \cite[Section 5.3 and Section 5.4]{MP}); when $(\mathbb{X}, \mathcal{X}) = (\R^2, \mathcal{B}(\R^2))$, $(\mathbb{Y}, \mathcal{Y}) =(\R_+, \mathcal{B}(\R_+))$ and $f_t(u,x) = uf(t-x)$, then $(X_t)_{t\in\BY}$ is a so-called {\em moving average L\'evy process}. Moving average L\'evy processes have gained momentum in a number of fields: for example, starting from the path-breaking paper \cite{BNS}, they have become relevant for the mathematical modeling of stochastic volatility in continuous-time financial models; they are also used in nonparametric Bayesian survival analysis (where they play the role of random hazard rates, see e.g.\ \cite{DBPP, PePrU}). For some recent applications of CLTs involving linear and quadratic functionals of moving average L\'evy processes in a statistical context, see e.g.\ \cite{CL} and the references therein. We refer the reader to \cite{ATW, SpodBook, SpodSur} for a survey of recent examples and applications of limit theorems for non-linear functionals of random fields, as well as to \cite{DMP} for a collection of CLTs involving functionals of a Poisson field on the sphere, with applications to cosmological data analysis.

We assume that
\begin{align}\label{eq:boundphi0}
|\varphi(r)| & \leq h(r), \quad r\in\R,\\
 \label{eq:boundphi1}
|\varphi'(r_1+r_2)| &\leq h(r_1)+h(r_2), \quad r_1,r_2\in\R,\\
\label{eq:boundphi2}
|\varphi''(r_1+r_2+r_3)| & \leq h(r_1)+h(r_2)+h(r_3), \quad r_1,r_2,r_3\in\R,
\end{align}
with a measurable function $h:\R\to[0,\infty)$. We also assume that
\begin{equation}\label{eq:C2}
C_2:=\max\bigg\{\sup_{t\in\mathbb{Y}} \BE h(X_t)^4,1\bigg\}<\infty
\end{equation}
and
\begin{equation}\label{eq:boundpsi}
\iint \psi_t(x)^2+\psi_t(x)^4 \, \lambda(\dint x) \, \varrho(\dint t)<\infty,
\end{equation}
where
$$
\psi_t(x)=C_2^{1/4} |f_t(x)|+H(f_t(x))), \quad x\in\BX, \quad t\in\BY,
$$
and
$$
H(r):=\I\{r\leq 0\} \int_r^0 h(s) \, \dint s + \I\{r>0\} \int_0^r h(s) \, \dint s, \quad r\in\R.
$$
For $g_1,g_2\in L^2(\lambda)$ we write $\langle g_1,g_2\rangle:=\int g_1 g_2 \, \dint \lambda$.

\begin{theorem}\label{thm:PoissonShotNoise}
Let $F$ be given by \eqref{eq:DefinitionF} and assume that \eqref{eq:Momentsf} and \eqref{eq:boundphi0}--\eqref{eq:boundpsi} hold and let $N$ be a standard Gaussian random variable. Then, $F\in L^2_\eta$ and
\begin{align*}
& d_K\bigg(\frac{F-\BE F}{\sqrt{\BV F}},N\bigg)\\
 & \leq \frac{36}{\BV F} \bigg[\int \langle \psi_{t_1}, \psi_{t_3}\rangle \langle \psi_{t_2}, \psi_{t_4}\rangle \langle \psi_{t_3},\psi_{t_4}\rangle + \langle \psi_{t_1}, \psi_{t_2} \rangle \langle \psi_{t_3}, \psi_{t_4}\rangle \langle \psi_{t_1} \psi_{t_2}, \psi_{t_3} \psi_{t_4}\rangle \allowdisplaybreaks\\
 & \hskip 2cm +\langle \psi_{t_1}\psi_{t_2}, \psi_{t_3}\psi_{t_4} \rangle \langle \psi_{t_3},\psi_{t_4} \rangle + \langle \psi_{t_1}\psi_{t_2}, \psi_{t_3}\psi_{t_4} \rangle^2 \, \varrho^4(\dint(t_1,t_2,t_3,t_4)) \bigg]^{1/2} \allowdisplaybreaks \allowdisplaybreaks\\
& \quad + \bigg( \int \langle \psi_{t_1}, \psi_{t_2} \rangle \langle \psi_{t_3}, \psi_{t_4} \rangle + \langle \psi_{t_1}\psi_{t_2}, \psi_{t_3}\psi_{t_4}\rangle \, \varrho^4(\dint(t_1,t_2,t_3,t_4))/(\BV F)^2+2 \bigg)^{1/4}\\
& \quad \quad \frac{16}{(\BV F)^{3/2}}\int \langle \psi_{t_1}\psi_{t_2},\psi_{t_3}\rangle \, \varrho^3(\dint(t_1,t_2,t_3)) \allowdisplaybreaks\\
& \quad + \frac{2\sqrt{2}}{\BV F} \bigg[\int \langle \psi_{t_1}\psi_{t_2},\psi_{t_3}\psi_{t_4}\rangle \, \varrho^4(\dint(t_1,t_2,t_3,t_4))\bigg]^{1/2}.
\end{align*}
\end{theorem}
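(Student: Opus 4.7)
\medskip\noindent\textbf{Plan of proof.} The strategy is to apply Theorem \ref{thm:mainKolmogorov} to the standardised functional $\tilde F := (F-\BE F)/\sqrt{\BV F}$, noting that the difference operator and the measure $\varrho$ commute linearly, so that $D_x\tilde F = D_xF/\sqrt{\BV F}$ and $D^2_{x_1,x_2}\tilde F = D^2_{x_1,x_2}F/\sqrt{\BV F}$. The core task is therefore to obtain sharp $L^4(\BP)$ bounds on $D_xF$ and $D^2_{x_1,x_2}F$ expressed through the kernels $\psi_t$, and to plug them into the six quantities $\gamma_1,\ldots,\gamma_6$.

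The first step is to differentiate $F$ pointwise in the variable of the Poisson process. Since $D_x X_t = f_t(x)$ for a first order Wiener-It\^o integral, the fundamental theorem of calculus yields
$$
D_x\varphi(X_t) = \int_0^{f_t(x)} \varphi'(X_t+s)\,\dint s,\quad D^2_{x_1,x_2}\varphi(X_t) = \int_0^{f_t(x_1)}\!\!\int_0^{f_t(x_2)}\varphi''(X_t+s_1+s_2)\,\dint s_2\,\dint s_1,
$$
and Fubini (whose use is ensured by \eqref{eq:boundpsi}) then gives the analogous representations of $D_xF$ and $D^2_{x_1,x_2}F$ as integrals against $\varrho$. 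Inserting the majorant \eqref{eq:boundphi1} into the first identity bounds $|D_x\varphi(X_t)|$ by $h(X_t)|f_t(x)|+H(f_t(x))$; combining the Minkowski inequality in $L^4(\BP)$ with $\|h(X_t)\|_{L^4(\BP)}\le C_2^{1/4}$ produces $\|D_x\varphi(X_t)\|_{L^4(\BP)}\le \psi_t(x)$. An identical argument based on \eqref{eq:boundphi2}, using $C_2^{1/4}\ge 1$ to absorb the three mixed terms into the factorised form $\psi_t(x_1)\psi_t(x_2)$, yields $\|D^2_{x_1,x_2}\varphi(X_t)\|_{L^4(\BP)}\le \psi_t(x_1)\psi_t(x_2)$. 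A final application of Minkowski against $\varrho$ delivers
$$
\big[\BE(D_xF)^4\big]^{1/4}\le \int\psi_t(x)\,\varrho(\dint t),\quad \big[\BE(D^2_{x_1,x_2}F)^4\big]^{1/4}\le \int\psi_t(x_1)\psi_t(x_2)\,\varrho(\dint t),
$$
and Proposition \ref{prop:Poincare} together with \eqref{eq:boundpsi} shows that $F\in L^2_\eta$ and in fact $F\in\dom D$.

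The third step is to substitute these $L^4$ bounds into the definitions of $\gamma_1,\ldots,\gamma_6$. Each mixed fourth moment is first split by Cauchy-Schwarz into a product of the individual $L^4$ norms of $D_xF$ or $D^2_{x_1,x_2}F$; Fubini then integrates out the variables $x_1,x_2,x_3$ (and sometimes $x_4$) against $\lambda$, producing the inner products $\langle \psi_{t_i},\psi_{t_j}\rangle$ and $\langle \psi_{t_i}\psi_{t_j},\psi_{t_k}\psi_{t_l}\rangle$ that appear in the statement. In this way $\gamma_1$, $\gamma_2$ and the two constituent summands of $\gamma_6^2$ are each dominated by one of the four terms under the square root in the first summand of the asserted bound; the elementary inequality $\sum c_i\sqrt{a_i}\le(\sum c_i^2)^{1/2}(\sum a_i)^{1/2}$ collects them with a universal constant absorbed into $36$. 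In parallel, Jensen's inequality gives $\BE|D_xF|^3\le[\BE(D_xF)^4]^{3/4}$, so $\gamma_3$ is controlled by $\int\langle\psi_{t_1}\psi_{t_2},\psi_{t_3}\rangle\,\varrho^3(\dint\cdot)/(\BV F)^{3/2}$, and $\gamma_5$ by the square-root quantity in the last summand.

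The remaining obstacle is $\gamma_4$, which requires a bound on $\BE(F-\BE F)^4$. Here we apply Lemma \ref{lem:BoundFourthMoment}: its two alternatives are dominated respectively by $256\bigl(\int\langle\psi_{t_1},\psi_{t_2}\rangle\,\varrho^2(\dint\cdot)\bigr)^2$ and by $4\int\langle\psi_{t_1}\psi_{t_2},\psi_{t_3}\psi_{t_4}\rangle\,\varrho^4(\dint\cdot)+2$, so the elementary estimate $\max\{a,b\}\le a+b$ combines them into the bracket appearing to the fourth root in the statement, after division by $(\BV F)^2$. Multiplying by the $\int\langle\psi_{t_1}\psi_{t_2},\psi_{t_3}\rangle\,\varrho^3(\dint\cdot)$ bound on $\int[\BE(D_xF)^4]^{3/4}\lambda(\dint x)$ and absorbing the constant into $16$ produces the second summand. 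The hard part is not any individual estimate but the bookkeeping of the eight or nine indices $t_1,t_2,t_3,t_4$ and $x_1,x_2,x_3,x_4$ across the six quantities $\gamma_i$, ensuring every Fubini swap and Cauchy-Schwarz contraction lands on the correct inner product.
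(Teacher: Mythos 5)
Your proposal is correct and follows essentially the same route as the paper: derive pointwise bounds on $D_x\varphi(X_t)$ and $D^2_{x_1,x_2}\varphi(X_t)$ from \eqref{eq:boundphi1}--\eqref{eq:boundphi2}, convert them to $L^4(\BP)$ bounds in terms of $\psi_t$, use Lemma \ref{lem:BoundFourthMoment} for the fourth-moment term, and substitute into $\gamma_1,\ldots,\gamma_6$ from Theorem \ref{thm:mainKolmogorov}. The only variation is that you invoke Minkowski's inequality (first in $L^4(\BP)$, then against $\varrho$) to obtain $[\BE(D_xF)^4]^{1/4}\le\int\psi_t(x)\,\varrho(\dint t)$ and the analogous bound for $D^2$ in one stroke, whereas the paper goes through the chain H\"older--Jensen--$(r+s)^{1/4}\le r^{1/4}+s^{1/4}$; your version eliminates the paper's numerical factors $8$ and $27$, making the subsequent $\gamma_i$ bookkeeping slightly cleaner while reaching the same (in fact slightly sharper) conclusion.
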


\begin{proof} We can of course assume that the right-hand side of the inequality in the statement is finite (otherwise, there is nothing to prove). Combining \eqref{eq:boundphi0} and \eqref{eq:C2} with the Cauchy-Schwarz inequality, yields that $\BE\int |\varphi(X_t)| \, \varrho(\dint t) <\infty$ and $F\in L^2_\eta$ (recall that $\varrho$ is finite). Moreover, the subsequent calculations show that, for $\lambda^2$-a.e.\ $(x_1,x_2)\in\BX^2$,
$$
\BE \int |\varphi(X_t+f_t(x_1))|+|\varphi(X_t+f_t(x_1)+f_t(x_2))| \, \varrho(\dint t)<\infty.
$$
Therefore we obtain from \eqref{eq:Momentsf} that, $\BP$-a.s.\ and for $\lambda^2$-a.e.\ $(x_1,x_2)\in \BX^2$,
$$
D_{x_1}F = \int \varphi(X_t+f_t(x_1)) -\varphi(X_t)  \, \varrho(\dint t) = \int \int_0^{f_t(x_1)} \varphi'(X_t+a) \, \dint a \, \varrho(\dint t)
$$
and
\begin{align*}
D^2_{x_1,x_2}F & =\int \varphi(X_t+f_t(x_1)+f_t(x_2)) - \varphi(X_t+f_t(x_1)) - \varphi(X_t+f_t(x_2)) + \varphi(X_t) \, \varrho(\dint t) \notag \\
& = \int \int_0^{f_t(x_1)} \int_0^{f_t(x_2)} \varphi''(X_t+a+b) \, \dint a \, \dint b \, \varrho(\dint t).
\end{align*}
Now \eqref{eq:boundphi1} and \eqref{eq:boundphi2} imply that, for $\lambda^2$-a.e.\ $(x_1,x_2)\in\BX^2$,
$$
|D_{x_1}F| \leq \int h(X_t) |f_t(x_1)| + H(f_t(x_1)) \, \varrho(\dint t)
$$
and
\begin{align*}
|D^2_{x_1,x_2}F| %& \leq \int\int_0^{|f_t(x_1)|}\int_0^{|f_t(x_2)|} h_2(X_t)+h_2(a)+h_2(b) \, \dint a \, \dint b\, \varrho(\dint t)\\
%&
\leq \int h(X_t) |f_t(x_1)| \, |f_t(x_2)| + |f_t(x_2)| H(f_t(x_1)) +|f_t(x_1)| H(f_t(x_2)) \, \varrho(\dint t).
\end{align*}
Using H\"older's inequality, Jensen's inequality and $(r+s)^{1/4}\leq r^{1/4}+s^{1/4}$, $r,s\geq 0$, we obtain, for $\lambda^2$-a.e. $(x_1,x_2)\in\BX^2$,
\begin{align*}
\BE (D_{x_1}F)^4 & \leq \int \BE \prod_{i=1}^4  \big(h(X_{t_i}) |f_{t_i}(x_1)| + H(f_{t_i}(x_1))\big)  \, \varrho^4(\dint(t_1,t_2,t_3,t_4)) \\
& \leq \int \prod_{i=1}^4 \bigg[\BE \big(h(X_{t_i}) |f_{t_i}(x_1)| + H(f_{t_i}(x_1))\big)^4\bigg]^{1/4}  \, \varrho^4(\dint(t_1,t_2,t_3,t_4)) \allowdisplaybreaks\\
& \leq 8 \int \prod_{i=1}^4 \bigg[\BE h(X_{t_i})^4 |f_{t_i}(x_1)|^4 + H(f_{t_i}(x_1))^4\bigg]^{1/4}  \, \varrho^4(\dint(t_1,t_2,t_3,t_4)) \allowdisplaybreaks\\
& \leq 8 \int \prod_{i=1}^4 \big( C_2^{1/4} |f_{t_i}(x_1)| + H(f_{t_i}(x_1)) \big)  \, \varrho^4(\dint(t_1,t_2,t_3,t_4)) \allowdisplaybreaks\\
& \leq 8 \bigg( \int  C_2^{1/4} |f_{t}(x_1)| + H(f_{t}(x_1)) \, \varrho(\dint t) \bigg)^4 \\
& \leq 8  \bigg( \int  \psi_t(x_1) \, \varrho(\dint t) \bigg)^4
\end{align*}
and
\begin{align*}
\BE (D^2_{x_1,x_2}F)^4
 & \leq  \int \BE \prod_{i=1}^4 \big(h(X_{t_i}) |f_{t_i}(x_1)| \, |f_{t_i}(x_2)| + |f_{t_i}(x_2)| H(f_{t_i}(x_1))\\
 & \hskip 2.5cm +|f_{t_i}(x_1)| H(f_{t_i}(x_2))\big) \varrho^4(\dint(t_1,t_2,t_3,t_4))\\
 & \leq 27 \int \prod_{i=1}^4 \big(C_2^{1/4} |f_{t_i}(x_1)| \, |f_{t_i}(x_2)| + |f_{t_i}(x_2)| H(f_{t_i}(x_1)) +|f_{t_i}(x_1)| H(f_{t_i}(x_2))\big) \\
 & \hskip 2cm \varrho^4(\dint(t_1,t_2,t_3,t_4))\\
 & \leq 27 \bigg(\int \psi_t(x_1) \psi_t(x_2)\, \varrho(\dint t) \bigg)^4.
\end{align*}
We aim at applying Theorem \ref{thm:mainKolmogorov} to $(F-\BE F)/\sqrt{\BV F}$. First of all, we exploit Lemma \ref{lem:BoundFourthMoment} to show that
\begin{align*}
\frac{\BE (F-\BE F)^4}{(\BV F)^2} &\leq \max\bigg\{\frac{256}{(\BV F)^2} \bigg[ \int \big[\BE(D_zF)^4\big]^{1/2} \, \lambda(\dint z)\bigg]^2, \frac{4}{(\BV F)^2}\int \BE(D_zF)^4 \, \lambda(\dint z) +2 \bigg\}\\
& \leq \max\bigg\{ 2048 \bigg[ \iint \psi_{t_1}(x) \psi_{t_2}(x) \, \varrho^2(\dint(t_1,t_2))\, \lambda(\dint x) \bigg]^2/(\BV F)^2,\\
& \hskip 1cm  4 \iint \psi_{t_1}(x)\psi_{t_2}(x) \psi_{t_3}(x)\psi_{t_4}(x) \, \varrho^4(\dint(t_1,t_2,t_3,t_4)) \, \lambda(\dint x)/(\BV F)^2+2 \bigg\}\allowdisplaybreaks\\
& \leq \max\bigg\{ 2048 \int \langle \psi_{t_1}, \psi_{t_2} \rangle \langle \psi_{t_3}, \psi_{t_4} \rangle \, \varrho^4(\dint(t_1,t_2,t_3,t_4))/(\BV F)^2,\\
& \hskip 1cm  4 \int \langle \psi_{t_1}\psi_{t_2}, \psi_{t_3}\psi_{t_4}\rangle \, \varrho^4(\dint(t_1,t_2,t_3,t_4))/(\BV F)^2+2 \bigg\}<\infty,
\end{align*}
showing, in particular, that $F\in {\rm dom} D$. Using the Cauchy-Schwarz inequality, we obtain that
\begin{align*}
\gamma_1^2 & \leq \frac{96 \sqrt{6}}{(\BV F)^2} \iint \psi_{t_1}(x_1) \psi_{t_2}(x_2) \psi_{t_3}(x_1)\\
 & \hskip 3cm \psi_{t_3}(x_3) \psi_{t_4}(x_2) \psi_{t_4}(x_3) \, \varrho^4(\dint(t_1,t_2,t_3,t_4)) \, \lambda^3(\dint(x_1,x_2,x_3))\\
& = \frac{96 \sqrt{6}}{(\BV F)^2} \int \langle \psi_{t_1}, \psi_{t_3}\rangle  \langle \psi_{t_2},\psi_{t_4} \rangle  \langle \psi_{t_3},\psi_{t_4} \rangle \, \varrho^4(\dint(t_1,t_2,t_3,t_4)).
\end{align*}
Analogously, we have
\begin{align*}
\gamma_2^2 & \leq \frac{27}{(\BV F)^2} \iint \psi_{t_1}(x_1) \psi_{t_1}(x_3) \psi_{t_2}(x_1) \psi_{t_2}(x_3)\\
 & \hskip 3cm \psi_{t_3}(x_2) \psi_{t_3}(x_3) \psi_{t_4}(x_2) \psi_{t_4}(x_3)\, \varrho^4(\dint(t_1,t_2,t_3,t_4)) \, \lambda^3(\dint(x_1,x_2,x_3))\\
 & = \frac{27}{(\BV F)^2} \int \langle \psi_{t_1},\psi_{t_2} \rangle \langle \psi_{t_3}, \psi_{t_4} \rangle  \langle \psi_{t_1}\psi_{t_2} , \psi_{t_3}\psi_{t_4} \rangle \, \varrho^4(\dint(t_1,t_2,t_3,t_4)), \allowdisplaybreaks\\
 \gamma_3 & \leq \frac{8^{3/4}}{(\BV F)^{3/2}} \iint \psi_{t_1}(x_1) \psi_{t_2}(x_1) \psi_{t_3}(x_1) \varrho^3(\dint(t_1,t_2,t_3)) \, \lambda(\dint x_1) \\
  & = \frac{8^{3/4}}{(\BV F)^{3/2}} \int  \langle \psi_{t_1} \psi_{t_2}, \psi_{t_3}\rangle \, \varrho^3(\dint(t_1,t_2,t_3)), \allowdisplaybreaks \\
  \gamma_4 & \leq \frac{[\BE (F-\BE F)^4]^{1/4}}{2(\BV F)^2} \int  \langle \psi_{t_1} \psi_{t_2}, \psi_{t_3}\rangle \, \varrho^3(\dint(t_1,t_2,t_3)), \allowdisplaybreaks \\
  \gamma_5^2 & \leq \frac{8}{(\BV F)^2} \iint  \psi_{t_1}(x_1)  \psi_{t_2}(x_1)  \psi_{t_3}(x_1)  \psi_{t_4}(x_1) \, \varrho^4(\dint(t_1,t_2,t_3,t_4)) \, \lambda(\dint x_1)\\
  & = \frac{8}{(\BV F)^2} \int  \langle \psi_{t_1}\psi_{t_2}, \psi_{t_3} \psi_{t_4} \rangle \varrho^4(\dint(t_1,t_2,t_3,t_4)), \allowdisplaybreaks \\
  \gamma_6^2 & \leq \frac{36 \sqrt{6}}{(\BV F)^2}  \iint \psi_{t_1}(x_1) \psi_{t_2}(x_1)  \psi_{t_3}(x_1)\\
   & \hskip 3cm \psi_{t_3}(x_2) \psi_{t_4}(x_1) \psi_{t_4}(x_2) \, \varrho^4(\dint(t_1,t_2,t_3,t_4)) \, \lambda^2(\dint(x_1,x_2)) \allowdisplaybreaks\\
  & \quad +\frac{81}{(\BV F)^2} \int \psi_{t_1}(x_1)\psi_{t_1}(x_2) \psi_{t_2}(x_1)\psi_{t_2}(x_2) \psi_{t_3}(x_1)\psi_{t_3}(x_2) \psi_{t_4}(x_1)\psi_{t_4}(x_2) \\
  & \hskip 3cm  \varrho^4(\dint(t_1,t_2,t_3,t_4)) \, \lambda^2(\dint(x_1,x_2)) \\
  & \leq \frac{36 \sqrt{6}}{(\BV F)^2}  \int \langle \psi_{t_1} \psi_{t_2}, \psi_{t_3} \psi_{t_4} \rangle \langle \psi_{t_3},\psi_{t_4} \rangle \, \varrho^4(\dint(t_1,t_2,t_3,t_4))\\
  & \quad +\frac{81}{(\BV F)^2} \int \langle \psi_{t_1} \psi_{t_2},\psi_{t_3}\psi_{t_4}\rangle \langle \psi_{t_1}\psi_{t_2},\psi_{t_3}\psi_{t_4}\rangle \, \varrho^4(\dint(t_1,t_2,t_3,t_4)).
\end{align*}

\end{proof}

\subsection{Moving averages}\label{ss:ma}

We illustrate Theorem \ref{thm:PoissonShotNoise} by focussing on stationary random fields $(X_t)_{t\in\R^d}$ defined in the following way. We let $\eta$ be a Poisson process on $\R \times \R^d$ with intensity measure $\lambda(\dint u, \dint x)=\nu(\dint u) \, \dint x$, where the measure $\nu$ on $\R$ satisfies
$$
\int |u|^j \, \nu(\dint u)<\infty, \quad j=1,2.
$$
We further let $f:\R^d \to \R$ be such that
$$
\int |f(x)|+f(x)^2 \, \dint x<\infty
$$
and define
$$
X_t = \int u f(t-x) \, \hat{\eta}(\dint(u,x)), \quad t\in\R^d,
$$
where $\hat{\eta}(\dint u, \dint x)=\eta(\dint u, \dint x)-\lambda(\dint u,\dint x)$. This means that $X_t=I_1(f_t)$, where $f_t(u,x):=u f(t-x)$. We are interested in the normal approximation of the random variables
$$
F_T := \int_{W_T} \varphi(X_t) \, \dint t, \quad T>0,
$$
where $W_T:=T^{1/d} [0,1]^d$ is a cube with volume $T$.

\begin{theorem}\label{thm:stationary}
Let $(X_t)_{t\in\R^d}$ and $F_T$ be as above and let $N$ be a standard Gaussian random variable. Assume that
\begin{equation}\label{eq:VariancePoissonShotNoise}
\BV F_T \geq \sigma T, \quad T\geq t_0,
\end{equation}
with $\sigma,t_0>0$ and that there are finite constants $p,\tilde{C}>0$ such that
\begin{equation}\label{eq:Boundhi}
|\varphi(r)| + |\varphi'(r)| + |\varphi''(r)| \leq \tilde{C} \big(1+|r|^p\big), \quad r\in\R,
\end{equation}
and
$$
m:=16\int u^2 \, \nu(\dint u)+16\int |u|^{4+4p} \, \nu(\dint u)<\infty \quad \text{and} \quad C_2:=\max\{\BE |X_0|^{4+4p},1\}<\infty.
$$
Moreover, assume that the function $g:\R^d\to \R$ given by
$$
g(y)=\big(C_2^{1/4}+\tilde{C}\big) \big(|f(y)|+3^p|f(y)|^{1+p}\big)
$$
satisfies
$$
M:=\max\bigg\{\int g(z) \, \dint z, \int \bigg(\int g(y-z) g(y)\, \dint y\bigg)^4 \, \dint z \bigg\}< \infty.
$$
Then, there is a finite constant $C>0$ depending on $\sigma$, $m$, $M$ and $t_0$ such that
$$
d_K\bigg(\frac{F_T-\BE F_T}{\sqrt{\BV F_T}}, N\bigg) \leq \frac{C}{\sqrt{T}}, \quad T\geq t_0.
$$
\end{theorem}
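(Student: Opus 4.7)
\textbf{The plan} is to derive Theorem~\ref{thm:stationary} from Theorem~\ref{thm:PoissonShotNoise} applied with $\BX = \R\times\R^d$, intensity $\lambda(\dint u,\dint x) = \nu(\dint u)\,\dint x$, $\BY = \R^d$, $\varrho = \ell_d|_{W_T}$ and $f_t(u,x) = u f(t-x)$. I would first fix the dominating function $h(r) := \tilde{C}(1+3^p|r|^p)$; the elementary estimates $|r_1+\cdots+r_k|^p \leq k^p(|r_1|^p+\cdots+|r_k|^p)$ for $k\in\{1,2,3\}$ show that $h$ satisfies \eqref{eq:boundphi0}--\eqref{eq:boundphi2}. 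By stationarity $\BE h(X_t)^4 = \BE h(X_0)^4$, which is finite by the moment assumption, so $C_2<\infty$.

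The crucial analytic step is the pointwise domination
\[
\psi_t(u,x) \leq \alpha(u)\, g(t-x), \qquad \alpha(u) := |u|+|u|^{p+1},
\]
obtained by integrating $h$ to get $|H(r)| \leq \tilde{C}|r| + \tilde{C}3^p|r|^{p+1}$ and comparing with the definition of $\psi_t$. Because $|u|^q \leq |u|^2 + |u|^{4+4p}$ for every $q\in[2,4+4p]$, the moments $m_j := \int \alpha(u)^j\,\nu(\dint u)$ for $j\in\{2,3,4\}$ are finite and controlled by $m$, which also yields \eqref{eq:boundpsi}. The same domination collapses every $\psi$-inner product into a moment part times a purely spatial integral:
\[
\langle \psi_s,\psi_t\rangle \leq m_2\,\tilde{G}(t-s), \qquad \langle \psi_{t_1}\psi_{t_2},\psi_{t_3}\psi_{t_4}\rangle \leq m_4\, J(t_1,t_2,t_3,t_4),
\]
and analogously for the threefold product, with $\tilde{G}(z) := \int g(y)g(y-z)\,\dint y$ and $J(t_1,\dots,t_4) := \int \prod_i g(t_i-a)\,\dint a$.

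The remaining task is purely analytic: every $W_T^k$-integral appearing on the right-hand side of Theorem~\ref{thm:PoissonShotNoise} should be $O(T)$, so that division by $\BV F_T\geq \sigma T$ (or by $(\BV F_T)^{3/2}$) produces the $T^{-1/2}$ rate. The basic tools are $\|\tilde{G}\|_1 = \|g\|_1^2 \leq M^2$, $\int \tilde{G}^4 = M$, and the interpolation bound $\|\tilde{G}\|_p \leq \|\tilde{G}\|_1^{1-\theta}\|\tilde{G}\|_4^{\theta}$ for $p\in[1,4]$. Integrals of the form $\int_{W_T^k} J_k\,\dint t$, with $J_k$ the $k$-variate analogue of $J$, reduce to $\int \phi(a)^k\,\dint a$ where $\phi(a) := \int_{W_T} g(t-a)\,\dint t$; Young's inequality gives $\|\phi\|_k \leq \|g\|_1 T^{1/k}$. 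The pure-$\tilde{G}$ terms, e.g.\ $\int_{W_T^4} \tilde{G}_{13}\tilde{G}_{24}\tilde{G}_{34}\,\dint t$, are handled by iterated Fubini, yielding $O(T\|\tilde{G}\|_1^3)$. Mixed terms are attacked by first integrating those $t$-variables that do not appear in any $\tilde{G}$-factor against $g(\cdot-a)$ to extract $\|g\|_1$ factors; for $\int \tilde{G}_{12}\tilde{G}_{34}J\,\dint t$ I would first apply AM-GM $\tilde{G}_{12}\tilde{G}_{34}\leq \tfrac12(\tilde{G}_{12}^2+\tilde{G}_{34}^2)$ and then repeat this procedure, reducing to $\int_{W_T^2} \tilde{G}^3\leq T\|\tilde{G}\|_3^3$.

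The main obstacle is the purely quartic term $\int_{W_T^4} J^2\,\dint t$, for which the previous tricks do not apply. Here I would use the identity
\[
\int_{W_T^4} J(t_1,\ldots,t_4)^2\,\dint t = \iint h_T(a,b)^4\,\dint a\,\dint b, \qquad h_T(a,b) := \int_{W_T} g(t-a)g(t-b)\,\dint t,
\]
together with the pointwise bound $h_T(a,b) \leq \tilde{G}(b-a)$, yielding $h_T^4 \leq \tilde{G}(b-a)^3\,h_T$, and then swap integration order and substitute $u = t-a$, $v = t-b$ to decouple the $t$-integral:
\[
\iint \tilde{G}(b-a)^3 h_T(a,b)\,\dint a\,\dint b = T\iint \tilde{G}(u-v)^3 g(u)g(v)\,\dint u\,\dint v = T\int \tilde{G}(z)^4\,\dint z = TM.
\]
This is exactly the moment of $\tilde{G}$ controlled by the hypothesis $M<\infty$, and completes the derivation of the $T^{-1/2}$ rate.
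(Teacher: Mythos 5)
Your proposal is correct and follows essentially the same route as the paper: reduce to Theorem~\ref{thm:PoissonShotNoise} via the dominating function $h(r)=\tilde{C}(1+3^p|r|^p)$, establish the pointwise bound $\psi_t(u,x)\le (|u|+|u|^{1+p})\,g(t-x)$, use the product structure of $\lambda$ to split each $\psi$-inner product into a $\nu$-moment factor (controlled by $m$) times a purely spatial kernel integral, and finally show that each $W_T^k$-integral is $O(T)$ so that division by $\BV F_T\ge\sigma T$ gives the $T^{-1/2}$ rate.

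The only genuine difference is in how the $O(T)$ scaling is extracted. The paper first uses the translation relation $\psi_t(u,x)=\psi_{t-s}(u,x-s)$ to integrate one variable over $W_T$ (producing the factor $T$) while letting the remaining variables range over all of $\R^d$, then evaluates the resulting $(\R^d)^{k-1}$-integrals by direct Fubini; for instance the quartic term becomes $T\int_{(\R^d)^3}\langle\psi_{t_1}\psi_{t_2},\psi_{t_3}\psi_0\rangle^2 \,\dint(t_1,t_2,t_3)\le m^2T\int\tilde G^4$. You instead estimate the $W_T^k$-integrals directly, using $\|\phi\|_k\le\|g\|_1T^{1/k}$ (Young), iterated Fubini for the pure-$\tilde G$ terms, AM-GM for the mixed term, and Lyapunov interpolation between $\|\tilde G\|_1=\|g\|_1^2$ and $\|\tilde G\|_4^4\le M$. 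Your identity $\int_{W_T^4}J^2\,\dint t=\iint h_T(a,b)^4\,\dint a\,\dint b$ followed by $h_T^4\le\tilde G(b-a)^3\,h_T$ is a clean variant of the paper's computation and recovers the identical bound $m^2T\int\tilde G^4$. Neither route is more elementary or stronger than the other — they are equivalent convolution-estimate arguments organised in different orders — so this counts as the same proof, just with alternative bookkeeping.
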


\begin{remark}{\rm Standard computations show that the assumption $M<\infty$ is satisfied in the following two standard cases: (i) $f$ is a bounded function with compact support, and (ii) $f(x) = c\exp(-\langle v, x\rangle){\bf 1}\{x_i\geq 0, \,\, i=1,...,d\}$, where $c\in \R$ and $v = (v_1,...,v_d)\in\R^d$ with $v_i>0$ for $i=1,...,d$. In the case $d=1$ the process $X_t$ is called an {\em Ornstein-Uhlenbeck L\'evy process}. See \cite{BNS} and \cite{DBPP, PePrU}, respectively, for applications of these processes in mathematical finance and Bayesian statistics. See \cite{PSTU10} for a number of related CLTs involving linear and quadratic functionals of Ornstein-Uhlenbeck L\'evy processes.

}
\end{remark}

\begin{proof}
It follows from \eqref{eq:Boundhi} that the assumptions \eqref{eq:boundphi0}, \eqref{eq:boundphi1} and \eqref{eq:boundphi2} are satisfied with
$$
h(r)= \tilde{C} \big(1+3^p |r|^p\big), \quad r\in\R.
$$
Together with the special structure of $f_t$ we see that
\begin{align*}
\psi_t(u,x) & \leq C_2^{1/4}|f_t(u,x)|+ \tilde{C} (1+3^p |f_t(u,x)|^p) |f_t(u,x)|\\
& \leq \big(C_2^{1/4}+\tilde{C} \big) (|u|+|u|^{1+p}) \big(|f(t-x)|+3^p|f(t-x)|^{1+p}\big)\\
& = (|u|+|u|^{1+p}) \, g(t-x).
\end{align*}
Using $\psi_t(u,x)=\psi_{t-s}(u,x-s)$, $u\in\R$, $s,t,x\in\R^d$, this estimate and the product form of $\lambda$, we obtain that
\begin{align*}
& \int_{W_T^4} \langle \psi_{t_1}, \psi_{t_3} \rangle \langle \psi_{t_2}, \psi_{t_4} \rangle \langle \psi_{t_3} ,\psi_{t_4}\rangle  \, \dint(t_1,t_2,t_3,t_4)\\
& \leq \int_{(\R^d)^3}\int_{W_T}
\langle \psi_{t_1-t_4}, \psi_{t_3-t_4} \rangle \langle \psi_{t_2-t_4}, \psi_{0} \rangle
\langle \psi_{t_3-t_4},\psi_{0}\rangle  \,\dint t_4 \,\dint(t_2,t_3,t_4) \allowdisplaybreaks\\
& = T \int \langle \psi_{t_1}, \psi_{t_3} \rangle \langle \psi_{t_2}, \psi_{0} \rangle \langle \psi_{t_3}, \psi_{0}\rangle  \, \dint(t_1,t_2,t_3) \allowdisplaybreaks\\
& \leq m^3 \, T \int g(t_1-x_1) g(t_3-x_1) g(t_2-x_2) g(-x_2) g(t_3-x_3) g(-x_3)  \, \dint(x_1,x_2,x_3,t_1,t_2,t_3) \\
& = m^3 \, T \bigg( \int g(z) \, \dint z\bigg)^6.
\end{align*}
In a similar way, we deduce that
\begin{align*}
& \int_{W_T^4} \langle \psi_{t_1},\psi_{t_2}\rangle \langle \psi_{t_3},\psi_{t_4}\rangle  \langle \psi_{t_1}\psi_{t_2},\psi_{t_3}\psi_{t_4}\rangle \, \dint(t_1,t_2,t_3,t_4)\\
& \leq T \int \langle \psi_{t_1},\psi_{t_2}\rangle \langle \psi_{t_3},\psi_{0}\rangle  \langle \psi_{t_1}\psi_{t_2},\psi_{t_3}\psi_{0}\rangle \, \dint(t_1,t_2,t_3)\\
& \leq m^3 \, T \int g(t_1-x_1)g(t_2-x_1) g(t_3-x_2)g(-x_2)\\
& \hskip 2cm g(t_1-x_3)g(t_2-x_3)g(t_3-x_3) g(-x_3) \, \dint(x_1,x_2,x_3,t_1,t_2,t_3)\\
& \leq m^3 \, T \int g(t_1) g(t_2) g(t_3) g(-x_2) g(t_1+x_1-x_3)\\
& \hskip 2cm g(t_2+x_1-x_3) g(t_3+x_2-x_3) g(-x_2+x_2-x_3) \, \dint(x_1,x_2,x_3,t_1,t_2,t_3)\\
& = m^3 \, T \bigg(\int \bigg(\int g(y-z) g(y)\, \dint y\bigg)^2 \, \dint z \bigg)^2,
\end{align*}
and then also
\begin{align*}
& \int_{W_T^4} \langle \psi_{t_1}\psi_{t_2},\psi_{t_3}\psi_{t_4}\rangle \langle \psi_{t_3},\psi_{t_4}\rangle \, \dint(t_1,t_2,t_3,t_4)\\
& \leq m^2 \, T \bigg(\int g(z) \, \dint z\bigg)^2 \int \bigg(\int g(y-z)g(y) \, \dint y\bigg)^2 \, \dint z,
\end{align*}
%\begin{align*}
%& \int_{W_T^4} \langle \psi_{t_1}\psi_{t_2},\psi_{t_3}\psi_{t_4}\rangle \langle \psi_{t_3},\psi_{t_4}\rangle \, \dint(t_1,t_2,t_3,t_4)\\
%& \leq T \int \langle \psi_{t_1}\psi_{t_2},\psi_{t_3}\psi_{0}\rangle \langle \psi_{t_3},\psi_{0}\rangle \, \dint(t_1,t_2,t_3)\\
%& \leq m^2 \, T \bigg(\int g(z) \, \dint z\bigg)^2 \int \bigg(\int g(y-z)g(y) \, \dint y\bigg)^2 \, \dint z,
%\end{align*}
$$
 \int_{W_T^4} \langle \psi_{t_1}\psi_{t_2},\psi_{t_3}\psi_{t_4}\rangle^2 \, \dint(t_1,t_2,t_3,t_4) \leq m^2 \, T \int \bigg(\int g(y-z) g(y) \, \dint y\bigg)^4 \, \dint z,
$$
%\begin{align*}
%& \int_{W_T^4} \langle \psi_{t_1}\psi_{t_2},\psi_{t_3}\psi_{t_4}\rangle^2 \, \dint(t_1,t_2,t_3,t_4)\\
%& \leq T \int \langle \psi_{t_1}\psi_{t_2},\psi_{t_3}\psi_{0}\rangle^2 \, \dint(t_1,t_2,t_3)\\
%& \leq m^2 \, T \int g(t_1-x_1)g(t_1-x_2) g(t_2-x_1)g(t_2-x_2)\\
%& \hskip 2cm g(t_3-x_1)g(t_3-x_2) g(-x_1)g(-x_2) \, \dint(x_1,x_2,t_1,t_2,t_3)\allowdisplaybreaks\\
%& \leq m^2 \, T \int g(t_1) g(t_1+x_1-x_2) g(t_2) g(t_2+x_1-x_2)\\
%& \hskip 2cm g(t_3) g(t_3+x_1-x_2) g(-x_1) g(-x_1+x_1-x_2) \, \dint(x_1,x_2,t_1,t_2,t_3)\\
%& = m^2 \, T \int \bigg(\int g(y-z) g(y) \, \dint y\bigg)^4 \, \dint z,
%\end{align*}
$$
\int_{W_T^4} \langle \psi_{t_1},\psi_{t_2}\rangle \langle \psi_{t_3},\psi_{t_4}\rangle \, \dint(t_1,t_2,t_3,t_4) \leq m^2 \, T^2  \bigg(\int g(z) \, \dint z\bigg)^4,
$$
%\begin{align*}
%\int_{W_T^4} \langle \psi_{t_1},\psi_{t_2}\rangle \langle \psi_{t_3},\psi_{t_4}\rangle \, \dint(t_1,t_2,t_3,t_4) & \leq T^2 \int \langle \psi_{t_1},\psi_{0}\rangle \langle \psi_{t_2},\psi_{0}\rangle \, \dint(t_1,t_2)\\
%& \leq m^2 \, T^2  \bigg(\int g(z) \, \dint z\bigg)^4,
%\end{align*}
$$
\int_{W_T^4} \langle \psi_{t_1}\psi_{t_2},\psi_{t_3}\psi_{t_4}\rangle \, \dint(t_1,t_2,t_3,t_4)
\leq m \, T \bigg(\int g(z) \, \dint z\bigg)^4
$$
%$$
%\int_{W_T^4} \langle \psi_{t_1}\psi_{t_2},\psi_{t_3}\psi_{t_4}\rangle \, \dint(t_1,t_2,t_3,t_4)
%\leq T \int \langle \psi_{t_1}\psi_{t_2},\psi_{t_3}\psi_{0}\rangle \, \dint(t_1,t_2,t_3)\leq m \, T \bigg(\int g(z) \, \dint z\bigg)^4
%$$
and
$$
\int_{W_T^3} \langle \psi_{t_1}\psi_{t_2},\psi_{t_3} \rangle \, \dint(t_1,t_2,t_3) \leq m \, T \bigg(\int g(z) \, \dint z\bigg)^3.
$$
%$$
%\int_{W_T^3} \langle \psi_{t_1}\psi_{t_2},\psi_{t_3} \rangle \, \dint(t_1,t_2,t_3) \leq T \int \langle \psi_{t_1}\psi_{t_2},\psi_{0} \rangle \, \dint(t_1,t_2) \leq m \, T \bigg(\int g(z) \, \dint z\bigg)^3.
%$$
Now the assertion follows from Theorem \ref{thm:PoissonShotNoise}.
\end{proof}

The following lemma helps to check assumption \eqref{eq:VariancePoissonShotNoise}.

\begin{lemma}\label{l8.4}
Let the assumptions of Theorem \ref{thm:stationary} hold and let
$$
\tilde{\varphi}(r)=\BE[\varphi(X_0+r)-\varphi(X_0)], \quad r\in\R.
$$
Then,
$$
\liminf_{T\to\infty}\frac{\BV F_T}{T} \geq \int \bigg(\int \tilde{\varphi}(uf(t)) \, \dint t\bigg)^2 \, \nu(\dint u)=:\sigma
$$
and \eqref{eq:VariancePoissonShotNoise} is satisfied whenever $\sigma\in (0,\infty)$.
\end{lemma}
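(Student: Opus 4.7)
My starting point would be a first-chaos lower bound for the variance. Since the hypotheses of Theorem~\ref{thm:stationary} entail those of Theorem~\ref{thm:PoissonShotNoise}, $F_T\in L^2_\eta$, and the Fock-space identity \eqref{covcha} applied with $F=G=F_T$ (using $f_n^{(T)}=\frac{1}{n!}\BE D^n F_T$) implies
$$
\BV F_T \;=\; \sum_{n=1}^\infty n!\,\|f_n^{(T)}\|_n^2 \;\ge\; \|f_1^{(T)}\|_1^2 \;=\; \int_\R\int_{\R^d}(\BE D_{(u,x)}F_T)^2\,\dint x\,\nu(\dint u).
$$
It therefore suffices to bound the $L^2(\lambda)$-norm of the first-chaos kernel from below.

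Next, I would compute the kernel explicitly. Pushing the difference operator inside the integral defining $F_T$, using the bound on $|\varphi'|$ supplied by \eqref{eq:Boundhi}, the moment condition $C_2<\infty$ and Fubini (justified by estimates on $|\varphi(X_t+uf(t-x))-\varphi(X_t)|$ analogous to those in the proof of Theorem~\ref{thm:PoissonShotNoise}), combined with the stationarity $X_t\stackrel{d}{=}X_0$, one obtains
$$
G_T(u,x)\;:=\;\BE D_{(u,x)}F_T\;=\;\int_{W_T}\tilde\varphi(uf(t-x))\,\dint t\;=\;\int_{W_T-x}\tilde\varphi(uf(s))\,\dint s.
$$
A second Fubini application --- legitimate because \eqref{eq:Boundhi} yields $|\tilde\varphi(r)|\le C(|r|+|r|^{1+p})$, so that $\int_{\R^d}|\tilde\varphi(uf(s))|\,\dint s\le C(|u|+|u|^{1+p})(\int|f|+\int|f|^{1+p})<\infty$ thanks to $M<\infty$ --- then gives
$$
\int_{\R^d}G_T(u,x)^2\,\dint x \;=\; \iint\tilde\varphi(uf(s_1))\,\tilde\varphi(uf(s_2))\,K_T(s_1-s_2)\,\dint s_1\,\dint s_2,
$$
with $K_T(v):=\ell_d(W_T\cap(W_T+v))=\prod_{i=1}^d(T^{1/d}-|v_i|)_+$.

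The asymptotic input is elementary: $K_T(v)/T\to 1$ as $T\to\infty$ for every fixed $v\in\R^d$, while $0\le K_T(v)/T\le 1$. Since $(s_1,s_2)\mapsto|\tilde\varphi(uf(s_1))\tilde\varphi(uf(s_2))|\in L^1(\R^{2d})$ for every $u$, dominated convergence yields
$$
\lim_{T\to\infty}\frac{1}{T}\int_{\R^d}G_T(u,x)^2\,\dint x \;=\; \left(\int_{\R^d}\tilde\varphi(uf(s))\,\dint s\right)^{\!2},\qquad u\in\R.
$$
Fatou's lemma in the outer $\nu$-integral (the integrands are non-negative) then gives
$$
\liminf_{T\to\infty}\frac{\BV F_T}{T}\;\ge\;\liminf_{T\to\infty}\int_\R\frac{1}{T}\int_{\R^d}G_T(u,x)^2\,\dint x\,\nu(\dint u)\;\ge\;\sigma,
$$
which is the first assertion. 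If $\sigma\in(0,\infty)$, the definition of $\liminf$ then produces some $t_0>0$ such that $\BV F_T\ge(\sigma/2)T$ for all $T\ge t_0$, which is \eqref{eq:VariancePoissonShotNoise} (with $\sigma$ replaced by $\sigma/2$).

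The main obstacle will be the careful bookkeeping of integrability that underlies the two Fubini swaps and the two limit interchanges. All four reduce to the one-line bound $|\tilde\varphi(r)|\le C(|r|+|r|^{1+p})$, which follows from \eqref{eq:Boundhi} applied to $\varphi'$ and from $\BE|X_0|^p\le C_2^{p/(4+4p)}<\infty$, together with the hypothesis $\int g\,\dint z<\infty$ that supplies $\int|f|<\infty$ and $\int|f|^{1+p}<\infty$.
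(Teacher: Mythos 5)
Your proof is correct and follows essentially the same route as the paper: lower-bound the variance by the first-chaos term $\int(\BE D_{(u,x)}F_T)^2\,\lambda(\dint(u,x))$ via \eqref{covcha}, compute $\BE D_{(u,x)}F_T=\int_{W_T}\tilde\varphi(uf(t-x))\,\dint t$ using stationarity, rewrite the integral to expose the covariogram factor $\ell_d(W_T\cap(W_T+v))/T$, and pass to the limit via integrability of $\tilde\varphi$. The only cosmetic difference is that the paper applies dominated convergence once to the full quadruple integral, while you apply it to the inner $\dint(s_1,s_2)$ integral and then Fatou in $\nu(\dint u)$; both are justified by the same $|\tilde\varphi(r)|\le C(|r|+|r|^{1+p})$ estimate and yield the same conclusion.
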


\begin{proof}
We have that, for $u\in\R$ and $x\in\R^d$,
$$
\BE D_{(u,x)}F_T = \BE \int_{W_T} \varphi(X_t+uf(t-x)) - \varphi(X_t) \, \dint t = \int_{W_T} \tilde{\varphi}(uf(t-x)) \, \dint t,
$$
where we have used the stationarity of $(X_t)_{t\in \R^d}$. Together with \eqref{covcha}, we obtain that
\begin{align*}
\frac{\BV F_T}{T} & \geq \frac{1}{T} \iint \I\{t_1,t_2\in W_T\} \tilde{\varphi}(uf(t_1-x)) \tilde{\varphi}(uf(t_2-x)) \, \dint(t_1,t_2,x) \, \nu(\dint u)\\
& = \iint \tilde{\varphi}(uf(t_1)) \tilde{\varphi}(uf(t_2)) \frac{\ell_d(W_T\cap (W_T+t_1-t_2))}{T}\, \dint(t_1,t_2) \, \nu(\dint u).
\end{align*}
By the assumptions of Theorem \ref{thm:stationary}, we see that
\begin{align*}
& \iint |\tilde{\varphi}(uf(t_1)) \tilde{\varphi}(uf(t_2))| \, \dint(t_1,t_2) \, \nu(\dint u)\\
& \leq \tilde{C}^2 \iint (1+2^{p-1}\BE|X_0|^p+2^{p-1}|u|^p |f(t_1)|^p) \, |u| \, |f(t_1)|\\
& \hskip 1.8cm (1+2^{p-1}\BE|X_0|^p+2^{p-1}|u|^p |f(t_2)|^p) \, |u| \, |f(t_2)| \, \dint(t_1,t_2) \, \nu(\dint u)<\infty.
\end{align*}
Now the dominated convergence theorem concludes the proof.
\end{proof}

In the special case where $\varphi$ is strictly increasing, $f\geq 0$ and $\nu$ is not concentrated at the origin, we have that
$$
\int \tilde{\varphi}(uf(t)) \, \dint t \neq 0
$$
for all $u\neq 0$ so that the previous lemma implies that $\BV F_T\geq \sigma T$, $t\geq t_0$, with constants $\sigma,t_0>0$. For the Ornstein-Uhlenbeck process discussed in the introduction (see Proposition \ref{introexOU}), we have that
$$
\int_{-\infty}^{\infty} \tilde\varphi(u\I\{t\geq 0\} \exp(-t)) \, \dint t = \int_0^{\infty} \tilde\varphi(u\exp(-t)) \, \dint t = \int_0^u \frac{1}{r} \, \tilde\varphi(r) \, \dint r.
$$
Consequently, the assumption \eqref{eq:VariancePoissonShotNoise} is satisfied if $\int_0^u \tilde{\varphi}(r) \, \dint r \neq 0$ for some $u$ in the support of $\nu$.

\bigskip

\noindent{\bf Acknowledgments.} GP wishes to thank Rapha\"el Lachi\`eze-Rey for useful discussions.

\end{document}